\numberwithin{equation}{section}
\renewcommand\d{\partial}
\renewcommand\a{\alpha}
\renewcommand\b{\beta}
\newcommand\s{\sigma}
\newcommand\R{\mathbb R}
\newcommand\Z{\mathbb Z}
\newcommand{\weak}{\rightharpoonup}
\def\O{\Omega}
\def\l{\lambda}
\def\bvp{\bm{\varphi}}
\def\e{\varepsilon}
\newcommand\br{\begin{rem}}
	\newcommand\er{\end{rem}}
\newcommand\bp{\begin{pmatrix}}
	\newcommand\ep{\end{pmatrix}}
\newcommand\be{\begin{equation}}
	\newcommand\ee{\end{equation}}
\newcommand\ba{\begin{equation}\begin{aligned}}
		\newcommand\ea{\end{aligned}\end{equation}}
\newcommand\nn{\nonumber}
\newcommand{\Id}{{\rm Id }}
\newcommand{\vq}{\textbf{\textup{q}}}
\newcommand{\vw}{\textbf{\textup{w}}}
\newcommand{\uu}{{\mathbf u}}
\newcommand{\vv}{{\mathbf v}}
\newcommand{\ff}{{\mathbf f}}
\newcommand{\ww}{{\mathbf w}}
\newcommand{\vr}{\varrho}
\newcommand{\vu}{\vc{u}}
\newcommand{\vf}{\vc{f}}
\newcommand{\vc}[1]{{\bf #1}}
\newcommand{\Grad}{\nabla_x}
\newcommand{\dx}{\, {\rm d} {x}}
\newcommand{\dt}{\, {\rm d} t }
\newcommand{\vU}{\vc{U}}
\newcommand{\dive}{{\rm div}_x}
\newtheorem{defi}{Definition}[section]
\newtheorem{theorem}[defi]{Theorem}
\newtheorem{proposition}[defi]{Proposition}
\newtheorem{lemma}[defi]{Lemma}
\newtheorem{remark}[defi]{Remark}
\begin{document}
	
	\title{Qualitative derivation of a density dependent incompressible Darcy law}
	
	\author{Danica Basari\'c\footnote{Politecnico di Milano, Department of Mathematics, Via E. Bonardi 9, 20133 Milano, Italy, danica.basaric@polimi.it} \and Florian Oschmann\footnote{Mathematical Institute of the Czech Academy of Sciences, \v{Z}itn\'a 609/25, 140 00 Praha, Czech republic, oschmann@math.cas.cz} \and Jiaojiao Pan\footnote{School of Mathematics, Nanjing University, 22 Hankou Road, Gulou District, Nanjing 210093, China, panjiaojiao@smail.nju.edu.cn.}}
	
	\date{}
	
	\maketitle
	
	\renewcommand{\refname}{References}
	
	\begin{abstract}
		This paper provides the first study of the homogenization of the 3D non-homogeneous  incompressible Navier--Stokes system in perforated domains with holes of supercritical size. The diameter of the holes is of order $\e^{\a} \ (1<\a<3)$, where $\e > 0$ is a small parameter measuring the mutual distance between the holes. We show that as $\e\to 0$, the asymptotic limit behavior of velocity and density is governed by Darcy's law under the assumption of a strong solution of the limiting system. Moreover, convergence rates are obtained. Finally, we show the existence of strong solutions to the inhomogeneous incompressible Darcy law, which might be of independent interest.
	\end{abstract}
	
	{\bf Keywords.} Homogenization; Non-homogeneous; Navier--Stokes system; Darcy's law.
	\par{\bf Mathematics Subject Classification.} 35B27, 76M50, 76N06.

	\tableofcontents
	
	
	\section{Introduction}\label{INTRODUCTION}
	Homogenization of different types of fluid flows in physical domains has been widely considered during the last decades. In the domains perforated by a large number of tiny holes, when the number of holes tends to infinity and their size tends to zero,  the limit system that describes the limit behavior of fluid flows is called homogenized system.
	
	Firstly, Tartar \cite{Tartar1} considered the homogenization of Stokes equations, where the holes' mutual distance is of the same order as their radius, and derived Darcy's law. For steady Stokes and Navier--Stokes
	equations,  Allaire \cite{ALL-NS1, ALL-NS2} provided a systematic study, which shows that the homogenized system is determined by the ratio $\s_{\e}$ given as
	\ba
	\sigma _\e: = \left(\frac{\e^d}{a_\e^{d-2}}\right)^{\frac{1}{2}},  \ d \geqslant 3;\quad{\sigma _\e}: = \e \left| \log \frac{{a_\e }}{\e} \right|^{\frac{1}{2}}, \ d = 2,\nn
	\ea
	where $\e > 0$ is the holes' mutual distance, and $a_\e > 0$ their radius. Roughly speaking, in the supercritical case, i.e., $\lim_{\e\to 0}\s_{\e}=0$, the holes are too large and the fluid velocity finally tends to zero, however, a proper rescaling shows that the limit behavior is governed by Darcy's law;  in the subcritical case, i.e., $\lim_{\e\to 0}\s_{\e}=\infty$, the holes are too tiny to be felt by the fluid and the limit system coincides with the original one; finally, in the critical case, i.e., $\lim_{\e\to 0}\s_{\e}=\s_{*}\in (0,+\infty)$, the holes impose some friction in the asymptotic limit, giving rise to an additional term ``out of nowhere'' (cf. \cite{CioranescuMurat1982}), and the limiting system is given by Brinkman's law\footnote{This additional term coincides with the well-known Stokes drag term, see \cite{HoeferKowalczykSchwarzacher2021} for a heuristic explanation.}. A unified approach to all the regimes mentioned was given by Lu in \cite{L}.
	
	The first homogenization of the compressible Navier--Stokes system was given by Masmoudi \cite{Mas-Hom}, who studied the case when the size of holes is proportional to their mutual distance, and derived Darcy's law. For the subcritical case in three spatial dimensions, there is a series of studies \cite{BO2,DFL,FL1,Lu-Schwarz18, OschmannPokorny2023}, which showed that the limit system is the same as the original one. Similar results where given in \cite{NP} and \cite{NO} for the two dimensional case. At low Mach number, \cite{BO1, HoeferKowalczykSchwarzacher2021} studied the critical and supercritical cases, respectively, and \cite{BCh24} investigated the full Navier--Stokes--Fourier system for heat-conducting fluids in the subcritical case.
	
	For the homogenization of the evolutionary incompressible Navier--Stokes system, Mikeli\'{c} \cite{Mik91} studied the case where the size of holes is proportional to their mutual distance and obtained Darcy's law. The critical and both sub- and supercritical regimes were considered in \cite{FeNaNe, LY}, respectively.

	Moreover, more complicated models describing fluid flows in homogenization have been studied: Feireisl, Novotn\'y, and Takahashi \cite{FNT-Hom} studied  the supercritical case of the full Navier--Stokes--Fourier system; Lu and Pokorn\'y \cite{Lu-Pokorny} considered the subcritical case of stationary compressible Navier--Stokes--Fourier equations; Feireisl, Lu, and Sun \cite{FLS} investigated the critical case of a non-homogeneous incompressible and heat conducting fluid confined to a 3D domain perforated. Jing \cite{Jing2020,Jing2021} considered Laplace equations and Lam\'e systems separately. Recently, Lu and Qian \cite{LY-Qian} considered the homogenization of evolutionary incompressible viscous non-Newtonian flows of Carreau-Yasuda type, where the size of holes is proportional to their mutual distance, and derived the linear Darcy's law.
	
	Regarding the convergence rates from the original system to the limit system, Allaire \cite{ALL-NS1,ALL-NS2} gave proper error estimates for Stokes equations for different sizes of the holes. Recently, Shen \cite{Shen22} gave the sharp convergence rates for Stokes equations to Darcy's Law. H\"{o}fer \cite{Hoefer2022} investigated the homogenization of the Navier--Stokes equations in perforated domains in the inviscid limit with different sizes of holes and obtained corresponding quantitative estimates. Finally, Lu and the second author \cite{Lu-Oschmann} generalized the results of \cite{LY-Qian} and derived Darcy's law when the size of the holes is $\e^{\a}$, $\a\in (1,\frac{3}{2})$. Also there, quantitative convergence rates are given.
	
	\subsection{Problems for supercritical case and aim of the paper}
	Although, as seen above, there is a vast of papers concerning homogenization of incompressible and compressible flows, there does not seem to be much literature regarding the ``in-between-case'' of inhomogeneous incompressible fluids. The case of critically sized holes was considered in \cite{Pan2024}, whereas the subcritical case was recently solved in \cite{LuPanYang2025}. Therefore, in the present paper, we wish to complete the study in considering the homogenization of the 3D non-homogeneous incompressible Navier--Stokes system, where the diameter of the holes is of order $\e^{\a} \ (1<\a<3)$, and the mutual distance of holes is proportional to $\e$ (supercritical case).
	
	Due to the presence of density, the traditional method of compactness arguments and oscillating test functions is not enough for us to obtain the limit equation. The main problem lies, both surprisingly and evidently, in the continuity equation: surprisingly, because for compressible fluids, usually the momentum equation causes problems, specifically the non-linear pressure and convective term (cf. \cite{FL1, Lu-Schwarz18, OschmannPokorny2023}); evidently, because the pressure does not depend (explicitly) on the density, hence no compactness argument is required there, and the fluid velocity is small such that the (only remaining non-linear) convective term finally vanishes. Specifically, our main problem is that the mere weak convergence of density $\vr_\e$ and velocity $\uu_\e$ do not give enough information to pass to the limit of the product $\vr_\e \uu_\e$ in the continuity equation. Instead, we will use the method of relative entropy (energy), which has been shown to be a successful tool when showing weak-strong uniqueness results as well as convergence rates, see \cite{Tartar1, Daus20, CBSV24, HNO24, Lu-Oschmann}. To this end, we will properly modify the velocity $\uu$ and density $\vr$ of the limiting system and compare this modification with the original primitive density-velocity-couple $(\vr_\e, \uu_\e)$. In doing so, we do not just prove the strong convergence of $(\vr_\e, \uu_\e)$ to $(\vr, \uu)$, but also give precise convergence rates.
	
Let us emphasize that the literature for the supercritical case in the presence of density is indeed rather sparse. Up to the authors' knowledge, the non-homogeneous incompressible case has not been addressed yet. Moreover, there seem to be just two rigorous papers for the compressible case: the seminal one by Masmoudi \cite{Mas-Hom} for the qualitative result if $\a = 1$, and the paper by H\"ofer, Ne\v{c}asov\'a, and the second author \cite{HNO24}, recovering quantitatively Masmoudi's result with convergence rates and generalizing it to the case of a small Knudsen number. On the other hand, the case $\a>1$ seems to be completely out of reach with the present methods; although the result from \cite{HoeferKowalczykSchwarzacher2021} gives convergence for all $1<\a<3$, the limiting system is indeed incompressible, which is due to the assumption of a low Mach number vanishing as $\e$ does so. Based on this lack of literature, in the present paper, we give the first step in the direction of a limiting system having non-constant density.
	
	\paragraph{{\bf Organization of the paper:}} The paper is organized as follows. In Section~\ref{sec:probForm}, we introduce the problem under consideration, give the main results, and introduce the local problem as a crucial ingredient in the homogenization process later on. Section~\ref{sec:unifbds} is devoted to show uniform bounds on the velocity and density, and to extend the pressure in a suitable way. The homogenization in case of a toroidal domain is carried out in Section~\ref{sec:conv}, and adaptations for bounded domains are made in Section~\ref{sec:bdDom}. The convergence rates for the pressure can be found in Section~\ref{sec:pressConv}. Finally, the local existence of strong solutions for the target system is provided in Section \ref{sec:ExistenceStrong}.
	
	\paragraph{{\bf Notations:}} We use standard notations $L^p$ and $W^{k,p}$ for Lebesgue and Sobolev spaces, respectively. If no ambiguity occurs, we denote them like in the scalar case even for vector- or matrix-valued functions, that is, $L^p(\O)$ instead of $L^p(\O; \R^3)$. The Frobenius inner product of two matrices $A,B \in \R^{3 \times 3}$ is denoted as $A:B = \sum_{i, i=1}^3 A_{ij} B_{ij}$. Moreover, we write $a \lesssim b$ whenever there is a constant $C>0$ that is independent of $a, b,$ and $\e$ such that $a \leq C b$. For a function $f$ defined in some domain $D \subset \R^3$, we use the symbol $\tilde{f}$ to denote the zero-extension of $f$ to $\R^3$, that is,
	\be
	\tilde{f}=f \ \ \mbox{in} \ \ D, \ \ \ \tilde{f}=0 \ \ \mbox{in} \ \ \R^3 \setminus D. \nn
	\ee
	Moreover, we use the symbol $L^{q}_0(D)$ to denote the space of functions in $L^q(D)$ with zero integral mean:
	\be
	L^q_0(D):=\left\{f\in L^q(D): \, \int_{D} f\,\dx=0\right\}. \nn
	\ee
	
	\section{Problem formulation, main results, and useful tools}\label{sec:probForm}
	\subsection{Problem formulation}\label{Problem formulation}
	We consider the three-dimensional torus $\O = \mathbb{T}^3 = (\R / \Z)^3$, or a bounded domain $\O \subset \R^3$ of class $C^{2,\b}$ with $0<\b<1$, and introduce a family of $\e$-dependent perforated domains
	$\{ \O_\e \}_{\e \in (0, 1)}$ as
	\begin{align}\label{OmegaEps}
	\O_\e = \O \setminus \bigcup_{k\in K_\e} T_{\e, k},\ && K_\e:=\{k\in \mathbb{Z}^3|\ \overline{Q}_{k, \e} \subset \O \}, && Q_{k, \e} := \e \Big( -\frac{1}{2},\frac{1}{2} \Big)^3 + k,\ \ k\in \mathbb{Z}^3,
	\end{align}
	where the sets $T_{\e,k}$ represent holes or obstacles. Suppose we have the following property concerning the distribution of the holes:
	\be
	T_{\e,k} := x_{\e,k} + a_\e T_0 \Subset B(x_{\e,k}, b_0a_{\e} ) \Subset  Q_{k, \e} \subset \O, \nn
	\ee
	with
	\be
	|x_{\e, k} - x_{\e, l}| \geq 2\e \ \mbox{ for any } \ k \neq l \in K_\e. \nn
	\ee
	Here, $b_0$ is a positive constant independent of $\e$, $0 \in T_0 \subset \R^3$ is a bounded simply connected domain of class $C^{2,\b}$ contained in $Q_{0,1}$, and $B(x,r)$ denotes the open ball centered at $x$ with radius $r>0$. Without loss of generality, we will assume that $T_0 \subset B(0, \frac18)$. The diameter of each $T_{\e,k}$ is of size $\mathcal{O}(a_\e)$ and their mutual distance  is $\mathcal{O}(\e)$. For the torus case, we will restrict ourselves to $\e \in (0, 1)$ with $(2\e)^{-1} \in \Z$ such that $\mathbb{T}^3 = \O = \cup_{k \in K_\e} Q_{k, \e}$. In either case, the number of holes contained in $\O$ satisfies
	\be
	|K_\e| \leq C(\e^{-3} |\Omega| + o(1)),\quad \mbox{for some} ~C>0~ \mbox{independent of} ~ \e.\nn
	\ee
	
	For given $T>0$, we now consider the following non-homogeneous incompressible Navier--Stokes system in $(0,T) \times \O_{\e}$:
	\ba\label{NSE}
	\begin{cases}
		\d_{t}\vr_{\e}+\dive (\vr_{\e} \vu_{\e}) = 0, &\mbox{in } (0,T) \times \O_{\e},\\
		\dive \vu_{\e} = 0, &\mbox{in } (0,T) \times \O_{\e},\\
		\s_\e^4 \big( \d_{t}(\vr_{\e} \vu_{\e})+ \dive (\vr_{\e} \vu_{\e} \otimes \vu_{\e}) \big) - \s_\e^2 \mu \Delta_x \vu_{\e} +\nabla_x p_{\e} =\vr_{\e} \vf, & \mbox{in } (0,T) \times \O_{\e},\\
		\vu_{\e} = 0, \quad &\mbox{on } (0,T) \times \d\O_{\e},\\
		\vr_{\e}|_{t=0} = \vr_{\e}^{0},\quad (\vr_{\e}\vu_{\e})|_{t=0} = \vr_{\e}^{0}\vu_{\e}^{0} & \mbox{in } \O_\e,
	\end{cases}
	\ea
	where $\vr_{\e}$ is the fluid's mass density, $\vu_{\e}$ its velocity field in $\R^{3}$, $p_{\e}$ the pressure, $\mu > 0$ is a constant viscosity coefficient, and $\vf \in L^{\infty}(0,T; L^\infty(\O))$ is the external force density. The boundary condition on the outer boundary $\d \O$ is redundant if $\O = \mathbb{T}^3$ is the three-dimensional torus.\\
	
	We remark that the presence of the factor $\s_\e^4$ comes from the fact that we anticipate the \emph{original} fluid velocity to be small of order $\s_\e^2$ and scale this factor out, such that the \emph{rescaled} fluid velocity $\uu_\e = \mathcal{O}(1)$ (in some sense); see \cite{HNO24, Lu-Oschmann} for more details on this rescaling argument.
	\subsection{Weak solutions}\label{Weak solutions}
	Let us now introduce the definition of finite energy weak solutions to the non-homogeneous incompressible Navier--Stokes system \eqref{NSE}.
	\begin{defi}\label{fews}
		Let
		\ba\label{init}
		\vr_\e^0 \in L^\infty(\O_\e), \quad \vr_\e^0 \uu_\e^0 \in L^2(\O_\e).
		\ea
		
		A couple $(\vr_{\e},\vu_{\e})$ is called a \emph{finite energy weak solution} to system \eqref{NSE} in $(0,T) \times \O_{\e}$ provided:
		
		\begin{enumerate}
			
			\item  $(\vr_{\e},\vu_{\e})$ satisfies:
			\begin{gather*}
				\vr_{\e}\in L^{\infty}(0,T;L^{\infty}({\O_{\e}}))\cap C([0,T],L^{2}({\O_{\e}})), \quad \vr_{\e}\geq 0 \quad \mbox{a.e. in} \quad (0,T) \times \O_{\e},\\
				\sqrt{\vr_{\e}}\vu_{\e}\in L^{\infty}(0,T;L^{2}({\O_{\e}})),\quad \vr_{\e}\vu_{\e} \in C_{\textup{weak}}([0,T],L^{2}({\O_{\e}})), \\
				\vu_{\e}\in L^{2}(0,T;W_{0}^{1,2}({\O_{\e}})).
			\end{gather*}
			
			Moreover,
			\begin{align}\label{consLp}
				\begin{cases}
					\int_{\O_{\e}} |\vr_{\e}(t, x)|^{q} \dx = \int_{\O_{\e}} |\vr_{\e}^{0}(x)|^{q} \dx, \quad \mbox{for any } q\in[1,\infty) \mbox{ and } t\in [0,\infty), \\
					\|\vr_{\e}\|_{L^{\infty}(0,T;L^{\infty}({\O_{\e}}))}=\|\vr_{\e}^{0}\|_{L^{\infty}({\O_{\e}})}.
				\end{cases}
			\end{align}

			\item For any $\psi\in C_{c}^{\infty}([0, T) \times \overline{\O_{\e}})$, 
			\be\label{continuity eq}
			\int_{0}^{T}\int_{\O_{\e}}\vr_{\e}(\d_{t}\psi+\vu_{\e}\cdot\nabla_x\psi) \dx \dt = -\int_{\O_{\e}}\vr_{\e}^{0}\psi(0,x) \dx.
			\ee
			
			\item For any  $\bvp \in C_{c}^{\infty}([0,T) \times \O_{\e}; \R^{3})$,
			\ba\label{momentum eq}
			&\int_{0}^{T}\int_{\O_{\e}} \big[-\s_{\e}^{4} \vr_{\e}\vu_{\e}\cdot\d_{t} \bvp- \s_\e^4 (\vr_{\e} \vu_{\e} \otimes \vu_{\e}):\nabla_x \bvp + \s_\e^2 \mu \nabla_x \vu_{\e}:\nabla_x \bvp + p_{\e} \dive \bvp \big] \dx \dt \\
			&= \int_{0}^{T}\int_{\O_{\e}}\vr_{\e}\vf \cdot \bvp \dx \dt +\int_{\O_{\e}}\s_{\e}^{4} \vr_{\e}^{0} \vu_{\e}^{0}\cdot \bvp(0,x) \dx.
			\ea
			
			\item For a.a. $\tau \in(0,\infty)$, there holds the energy inequality
			\ba\label{enIneq}
			&\frac{1}{2}\s_{\e}^{4}\int_{\O_{\e}}\vr_{\e} |\vu_{\e}(\tau, x)|^{2} \dx + \s_\e^2 \mu\int_{0}^{\tau}\int_{\O_{\e}}|\nabla_x \vu_{\e}|^{2} \dx \dt \\
			&\leq \frac{1}{2}\s_{\e}^{4}\int_{\O_{\e}}\vr_{\e}^{0} |\vu_{\e}^{0}(x)|^{2} \dx +\int_{0}^{\tau}\int_{\O_{\e}}\vr_{\e}\vf \cdot \vu_{\e} \dx \dt.
			\ea
		\end{enumerate}
	\end{defi}
	
	For any fixed $\e>0$, the global existence of finite energy weak solutions in Definition~\ref{fews} is shown in \cite[Theorem~2.1]{Lions1997}.

	\subsection{Main results}
	In this paper, we focus on the three dimensional case and take
	\be
	a_{\e}=\e^{\a} \quad\mbox{with}\quad 1<\a<3,\nn
	\ee
	which implies $\s_{\e}=\e^{\frac{3-\a}{2}}$. \\
	
	Since the fluid is incompressible and the holes are large, we expect in the limit the same behavior as for the purely incompressible case considered by Allaire \cite{ALL-NS2} (see also \cite{HNO24} for the compressible case with $\a = 1$). More precisely, we expect the limiting equations to be
	\begin{align}\label{Darcy}
		\begin{cases}
			\d_t \vr + \dive(\vr \uu) = 0 & \mbox{in } (0,T) \times \O,\\
			\dive \vu=0 & \mbox{in } (0,T) \times \O,\\
			\mu M_{0}\vu=\vr \vf-\nabla_x p & \mbox{in } (0,T) \times \O,\\
			\vu\cdot \mathbf n=0 & \mbox{on } (0,T) \times \d\O,
		\end{cases}
	\end{align}
	where $M_0$ is the resistance matrix defined via the so-called local problem, see \eqref{M0} below. Again, the boundary condition on $\d \O$ is redundant if $\O = \mathbb{T}^3$.\\
	
	Throughout the paper, we will assume the zero extensions of the initial data to satisfy
	\ba\label{init2}
	\tilde \vr^{0}_{\e} \in L^\infty(\O), \ \s_\e^2 \sqrt{\tilde \vr_\e^0} \tilde \vu^{0}_{\e} \in L^{2}(\O) \mbox{ are uniformly bounded in } \e.
	\ea
	
	We are now in the position to give the main results of this paper.
	
	\begin{theorem}[Torus case]\label{limitTorus}
		Let $\Omega = \mathbb{T}^3$ be the three-dimensional torus and $1<\a<3$. Let $(\vr_{\e},\vu_{\e})$ be a finite energy weak solution of the Navier--Stokes system \eqref{NSE} in the sense of Definition~\ref{fews} with initial data satisfying \eqref{init2}, and force $\vc f \in L^\infty(0,T; L^\infty(\O))$. Moreover, let $(\vr, \vu)$ be a strong solution to Darcy's law \eqref{Darcy} belonging to the class
		\begin{align*}
			 \vr \in C([0,T]; W^{3,2}(\Omega)), &\quad \partial_t \vr \in L^2(0,T; W^{2,2}(\Omega)), \\
			\vu \in L^2(0,T; W^{3,2}(\Omega)), &\quad \partial_t \vu \in L^2(0,T; W^{2,2}(\Omega)). 
		\end{align*}
		Then, there is $\e_0>0$ and a constant $C>0$ which is independent of $\e$ such that for any $\e \in (0, \e_0)$,
		\begin{align*}
			&\|\tilde \uu_\e - \uu\|_{L^2((0,T) \times \O)}^2 + \|\tilde \vr_\e - \vr\|_{L^2((0,T) \times \O)}^2 \\
			&\leq C \big( \sigma_\e^4 \|\sqrt{\tilde \vr_\e^0} ( \tilde{\uu}_{\e}^0 - \uu(0, \cdot) )\|_{L^2(\O)}^2 + \|\tilde \vr_\e^0 - \vr(0, \cdot)\|_{L^2(\O)}^2 + \e^{\a - 1} + \e^{3 - \a} \big).
		\end{align*}
	\end{theorem}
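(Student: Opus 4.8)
The plan is to use the relative entropy (energy) method, comparing the primitive weak solution $(\vr_\e, \uu_\e)$ of \eqref{NSE} with a suitable modification of the strong solution $(\vr, \uu)$ of Darcy's law \eqref{Darcy}. Since $\uu$ does not vanish on the holes $T_{\e,k}$, it cannot be used directly as a test function in the perforated domain, so the first step is to construct a corrected velocity $\uu_\e^*$ by subtracting a boundary-layer corrector built from the solutions of the local (cell) problem defining $M_0$; this corrector equals $\uu$ away from a neighborhood of the holes, vanishes on $\d\O_\e$, and is (approximately) divergence-free. The price of this correction is controlled by the scaling $\s_\e^2 = \e^{3-\a}$ and contributes the error terms $\e^{\a-1}$ (near-hole region, of relative volume $\sim \e^{\a-1}$ times the number of holes) and $\e^{3-\a}$ (size of the corrector in the energy norm). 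One should also modify $\vr$ only mildly (essentially keep $\vr$, extended by something bounded into the holes), so that the density comparison is governed purely by the transport structure of the continuity equation.

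Second, I would introduce the relative energy functional
\[
\mathcal{E}_\e(\tau) = \frac{1}{2}\s_\e^4 \int_{\O_\e} \vr_\e |\uu_\e - \uu_\e^*|^2 \dx + \frac{1}{2}\int_{\O_\e} |\vr_\e - \vr|^2 \dx,
\]
and derive a Gronwall-type inequality for it. The kinetic part is handled exactly as in weak–strong uniqueness: one plugs $\uu_\e^*$ as a test function into the momentum equation \eqref{momentum eq}, uses the energy inequality \eqref{enIneq}, and the Darcy relation $\mu M_0 \uu = \vr\vf - \nabla_x p$ to see that the leading-order terms cancel, leaving (i) error terms from the corrector, of the advertised orders, and (ii) a cross term coupling $\vr_\e - \vr$ to $\uu_\e - \uu_\e^*$ through the forcing $\vr\vf$ and the friction $\mu M_0 \uu$. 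For the density part, I would subtract the weak continuity equation \eqref{continuity eq} for $\vr_\e$ from the (strong) continuity equation for $\vr$, test with $\vr_\e - \vr$, and use $\dive\uu_\e = \dive\uu = 0$; since both densities are bounded in $L^\infty$ and $\nabla\uu \in L^2_t W^{2,2}_x \hookrightarrow L^2_t L^\infty_x$, the resulting commutator terms are bounded by $\int (\vr_\e - \vr)^2$ plus a term of the form $\int \vr_\e (\uu_\e - \uu_\e^*)\cdot\nabla(\vr_\e - \vr)$ — which, after integration by parts and using $\dive\uu_\e = 0$, again produces only the good quadratic terms times $\|\nabla\vr\|_{L^\infty}$ and a controllable coupling with the kinetic part. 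Collecting everything, $\frac{d}{d\tau}\mathcal{E}_\e \lesssim \mathcal{E}_\e + (\text{errors})$, and Gronwall together with $\mathcal{E}_\e(0) \lesssim \s_\e^4\|\sqrt{\tilde\vr_\e^0}(\tilde\uu_\e^0 - \uu(0,\cdot))\|_{L^2}^2 + \|\tilde\vr_\e^0 - \vr(0,\cdot)\|_{L^2}^2$ yields the stated bound, after noting that $\s_\e^4 \vr_\e |\uu_\e - \uu_\e^*|^2$ controls $\|\tilde\uu_\e - \uu\|_{L^2}^2$ up to the corrector error (using the lower bound on $\s_\e^4$ balanced against the factor, plus $\|\uu_\e^* - \uu\|^2 \lesssim \e^{\a-1}$).

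The main obstacle, I expect, is the treatment of the convective term $\s_\e^4 \dive(\vr_\e \uu_\e \otimes \uu_\e)$ and, more delicately, the cross terms mixing the density error with the velocity error. The convective term is the only genuinely nonlinear quantity remaining, but it comes with the full factor $\s_\e^4 = \e^{2(3-\a)}$, which vanishes fast; using the uniform energy bound $\s_\e^4 \int \vr_\e|\uu_\e|^2 \lesssim 1$ and $\s_\e^2 \int |\nabla\uu_\e|^2 \lesssim 1$ (from Section~\ref{sec:unifbds}) together with a Sobolev/Poincaré estimate in $\O_\e$ — recalling the Poincaré constant on the perforated domain degrades like $\s_\e^{-2}$ — one should see that this term is of the claimed order $\e^{\a-1} + \e^{3-\a}$ and can be absorbed. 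The coupling $\int (\vr_\e - \vr)\,(\uu_\e - \uu_\e^*)\cdot(\vr\vf - \mu M_0\uu)/(\text{scaling})$ requires care: one wants to bound it by $\eta\,\s_\e^4\|\sqrt{\vr_\e}(\uu_\e - \uu_\e^*)\|^2 + C_\eta \|\vr_\e - \vr\|^2$, which forces a careful bookkeeping of the powers of $\s_\e$ so that the first term is genuinely absorbed by the dissipation rather than just by the kinetic energy. Establishing that the $\s_\e$-weights close correctly — i.e. that the friction term $\mu M_0\uu$ produced by the corrector matches the leading behavior of $\s_\e^2\mu\int \nabla\uu_\e : \nabla\uu_\e^*$ — is the technical heart of the argument and is exactly where the local problem \eqref{M0} and its quantitative convergence properties (from Section~\ref{sec:conv}) enter.
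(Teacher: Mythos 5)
Your proposal follows essentially the same route as the paper: the relative energy functional is exactly the one the paper uses, the corrected velocity $\uu_\e^*$ is the paper's $\ww_\e = W_\e\uu$ built from the local problem, the density is compared directly with $\vr$ through the transport structure of the continuity equation, the convective term is killed by the prefactor $\s_\e^4$ together with Poincar\'e's inequality \eqref{poinc} on $\O_\e$, and the cancellation of $\s_\e^2(\nabla_x\vq_\e - \Delta_x W_\e)$ against the Darcy friction $M_0\uu$ (Proposition~\ref{TestFct}, estimate \eqref{errorM}) is, as you say, the technical heart. The rates $\e^{\a-1}$ and $\e^{3-\a}$ arise as you predict.

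The one step your sketch passes over silently, and which would make the argument fail as written, is the pressure $p_\e$ of the \emph{perforated} problem. Since $\dive\uu_\e^* = (W_\e - \Id):\nabla_x\uu \neq 0$, testing the momentum equation with $\uu_\e^*$ produces the term $\int_0^\tau\int_{\O_\e} p_\e\,\dive\uu_\e^*\dx\dt$; although $\dive\uu_\e^*$ is small in $L^3$, there is no a priori uniform bound on $p_\e$ in any Lebesgue space, and the naive zero extension of $p_\e$ does not satisfy uniform estimates in perforated domains. The paper resolves this by passing to the time-integrated momentum equation in the spirit of Mikeli\'c and extending the time-integrated pressure $P_\e = \int_0^t p_\e$ to all of $\O$ by duality through Allaire's restriction operator $R_\e$ (Section~\ref{sec:pressExt}), which yields $\|\tilde P_\e\|_{L^\infty(0,T;L_0^2(\O))}\leq C$; the pressure contribution in the relative energy inequality is then written as $\int_0^\tau\int_{\O_\e} P_\e\,\dive\d_t\uu_\e^*\dx\dt - \int_{\O_\e} P_\e(\tau,\cdot)\dive\uu_\e^*(\tau,\cdot)\dx$ and bounded by $C\|W_\e - \Id\|_{L^3(\O)}\|\Grad\uu\|_{W^{1,2}(0,T;L^6(\O))}\lesssim\e^{\a-1}$. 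Without some such device --- a uniformly bounded pressure extension, or an exactly solenoidal corrector, which the construction via $W_\e$ does not provide --- the Gronwall inequality does not close. A second, more minor point: testing the weak continuity equation with $\vr_\e - \vr$ is not directly admissible since $\vr_\e$ is only in $L^\infty$; the paper instead invokes the conservation of $L^q$-norms \eqref{consLp} built into Definition~\ref{fews}, which gives the same identity.
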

	
	For bounded domains, we have a similar result:
	\begin{theorem}[Case of bounded domain]\label{limitDomain}
		Let $\Omega \subset \R^3$ be a bounded domain of class $C^{2, \beta}$, $\beta \in (0,1)$, and $1<\a< \frac{3}{2}$. Let otherwise the assumptions of Theorem~\ref{limitTorus} be in order. Then,
		\begin{align*}
			&\|\tilde \uu_\e - \uu\|_{L^2((0,T) \times \O)}^2 + \|\tilde \vr_\e - \vr\|_{L^2((0,T) \times \O)}^2 \\
			&\leq C \big( \sigma_\e^4 \|\sqrt{\tilde \vr_\e^0} ( \tilde{\uu}_{\e}^0 - \uu(0, \cdot) )\|_{L^2(\O)}^2 + \|\tilde \vr_\e^0 - \vr(0, \cdot)\|_{L^2(\O)}^2 + \e^{\a - 1} + \e^{3-2\a} \big).
		\end{align*}
	\end{theorem}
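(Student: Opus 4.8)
The plan is to reduce Theorem~\ref{limitDomain} to the machinery already assembled for the toroidal case, indicating only those places where the bounded domain forces a change. The backbone is the relative entropy (energy) inequality: one introduces a modification $(\vr_R, \uu_R)$ of the strong solution $(\vr, \uu)$ of \eqref{Darcy} which is admissible as a competitor in the weak formulation \eqref{continuity eq}--\eqref{momentum eq} on $\O_\e$, and studies the functional
\begin{align*}
\mathcal{E}_\e(\t) = \frac12 \s_\e^4 \int_{\O_\e} \vr_\e |\uu_\e - \uu_R|^2(\t)\dx + \frac12 \int_{\O_\e} |\vr_\e - \vr_R|^2(\t)\dx.
\end{align*}
Expanding the two summands using \eqref{enIneq}, the continuity equation for $\vr_\e$, and the equations satisfied by $(\vr,\uu)$, one arrives at a Gr\"onwall-type inequality $\mathcal{E}_\e(\t) \le \mathcal{E}_\e(0) + C\int_0^\t \mathcal{E}_\e(s)\,ds + (\text{error})$, where the error collects (i) the viscous dissipation term $\s_\e^2 \mu \int |\nabla_x \uu_R|^2$, which is of size $\s_\e^2 = \e^{3-\a}$ on the torus; (ii) the Darcy friction term $\mu M_0 \uu$ tested against $\uu_\e - \uu_R$, which is controlled through the local problem and the oscillating test function built from its solutions; and (iii) the commutator/remainder terms produced by the cut-off in the modification $(\vr_R, \uu_R)$ near the holes. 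The extraction of $\tilde\uu_\e - \uu$ and $\tilde\vr_\e - \vr$ in $L^2((0,T)\times\O)$ from $\mathcal{E}_\e$ then uses the uniform positivity of $\vr_\e$ from below away from vacuum (or, more precisely, the comparison with $\vr_R$ bounded below) together with the restriction estimates and the fact that $|\O \setminus \O_\e| \to 0$.

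First I would recall from Section~\ref{sec:conv} the precise form of the modified functions and the oscillating test function $\bvp_\e$ associated to the local problem \eqref{M0}, observing which of the error estimates there are purely local (hence insensitive to whether $\O$ is the torus or a bounded $C^{2,\b}$ domain) and which use periodicity or the absence of an outer boundary. Next I would redo the construction near $\d\O$: on a bounded domain one must compose the Bogovskii-type corrector and the cut-off with a boundary layer of width $\sim\e$ to respect $\uu_\e = 0$ on $\d\O_\e$ and $\uu\cdot\mathbf n = 0$ on $\d\O$; this introduces an extra contribution supported in an $\e$-neighborhood of $\d\O$ whose measure is $\mathcal{O}(\e)$, and one estimates it using the $W^{3,2}$ regularity of $(\vr,\uu)$ and trace/interpolation inequalities. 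This is exactly the mechanism that replaces the torus error term $\e^{3-\a}$ by $\e^{3-2\a}$: the boundary layer, combined with the scaling of the pressure extension on $\O_\e$ for a bounded domain, costs an additional power $\e^{-\a}$ relative to the interior estimate, while the restriction $\a < \tfrac32$ is precisely what keeps $3 - 2\a > 0$ so that this term still vanishes as $\e\to 0$ (and matches the analogous threshold in \cite{Lu-Oschmann}).

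Then I would reassemble the Gr\"onwall inequality with the boundary-corrected error, apply Gr\"onwall's lemma on $[0,\t]$, take the supremum over $\t\in(0,T)$, and finally pass from control of $\mathcal{E}_\e$ to control of $\|\tilde\uu_\e - \uu\|_{L^2}^2 + \|\tilde\vr_\e - \vr\|_{L^2}^2$, again absorbing the discrepancy between $\uu_R$ and $\uu$ (resp. $\vr_R$ and $\vr$) by their explicit $\e$-dependent construction, whose cost is $\mathcal{O}(\e^{\a-1})$ — this is the term common to both theorems. Assembling the three sources of error $\e^{\a-1}$, the boundary-layer/pressure term $\e^{3-2\a}$, and the initial data terms yields the claimed bound. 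The main obstacle I anticipate is item (iii) together with the boundary layer: one must verify that the corrector making $(\vr_R,\uu_R)$ divergence-free and boundary-compatible on the \emph{perforated bounded} domain $\O_\e$ can be built with norms that degrade no worse than $\e^{-\a}$ (not $\e^{-2\a}$ or worse), since otherwise the error would not vanish for any $\a>1$; controlling the interaction of the hole-scale corrector with the $\d\O$-scale boundary layer — i.e. showing they can be localized to disjoint regions or that their overlap is harmless — is the delicate technical point, and is presumably why the bounded-domain theorem is restricted to the narrower range $1<\a<\tfrac32$.
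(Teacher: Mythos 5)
Your proposal follows the same overall strategy as the paper -- run the relative energy inequality with a test function built from the cell problem and correct it near $\d\O$ -- but it stops exactly where the actual work of Theorem~\ref{limitDomain} begins, and it contains one claim that is wrong. The entire content of the bounded-domain adaptation is the \emph{explicit construction} of the boundary corrector and the verification of its estimates: the paper solves $\mathbf{curl}\,\Phi_1^\e = W_\e(\e^\a\cdot)-\mathbb{I}$ to obtain a vector potential $\Phi^\e$ with $\|\Phi^\e\|_{L^\infty}+\e^\a\|\nabla_x\Phi^\e\|_{L^\infty}\lesssim 1$, and sets $\tilde{\ww}_\e=\e^\a\nabla_x\eta_\e\times(\Phi^\e\uu)+\eta_\e\ww_\e$ with a cut-off $\eta_\e$ supported in a layer of width $\e^{\a}$ (not $\e$, as you write). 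The curl structure is essential: a naive cut-off $\eta_\e\ww_\e$ would produce a divergence of size $\nabla_x\eta_\e\cdot(\ww_\e-\uu)\sim\e^{-\a}\e^{\a-1}$ in an $\e^\a$-layer, and the pressure term $\int\nabla_x p\cdot(\uu_\e-\tilde{\ww}_\e)$ would then be out of control; it is precisely the identity $\mathbf{curl}(\Phi^\e\uu)\approx(W_\e-\mathbb{I})\uu+O(\e^\a\nabla_x\Phi^\e\otimes\nabla_x\uu)$ that keeps $\dive\tilde{\ww}_\e$ small. You flag "controlling the interaction of the hole-scale corrector with the $\d\O$-scale boundary layer" as the delicate point to be verified, but you do not supply the device that resolves it, so the proof is not complete. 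Moreover, the source of $\e^{3-2\a}$ is concretely the viscous cross term $\s_\e^2\int\nabla_x(\tilde{\ww}_\e-\ww_\e):\nabla_x(\uu_\e-\tilde{\ww}_\e)$, with $\|\nabla_x(\tilde{\ww}_\e-\ww_\e)\|_{L^2}^2\lesssim\e^{-\a}$, giving $\s_\e^2\e^{-\a}=\e^{3-2\a}$ after Young's inequality; attributing it to "the scaling of the pressure extension" is not what happens (the pressure contribution is only $\e^{\a-1}+\e^\a$).

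The second issue is your final extraction step. You invoke "the uniform positivity of $\vr_\e$ from below away from vacuum" to pass from the relative energy to $\|\tilde\uu_\e-\uu\|_{L^2}^2$. The paper neither assumes nor uses any lower bound on $\vr_\e$ (vacuum is allowed, $\vr_\e\ge 0$), and in any case the kinetic part of the relative energy carries the weight $\s_\e^4$ and could never yield an unweighted $L^2$ bound on the velocity. The correct mechanism, already present in the torus proof you cite, is that the dissipation term $\s_\e^2\int|\nabla_x(\uu_\e-\tilde{\ww}_\e)|^2$ on the left-hand side of the REI controls $\|\uu_\e-\tilde{\ww}_\e\|_{L^2(\O_\e)}^2$ via the rescaled Poincar\'e inequality \eqref{poinc}, while the density difference is controlled directly by the unweighted term $\tfrac12\int(\vr_\e-\vr)^2$ in the relative energy. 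As written, your route to the conclusion would fail.
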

	
	\begin{remark}
		The different convergence rates stem from the mismatch of the boundary conditions between $\uu_\e$ and $\uu$ on $\d \O$, namely $\uu_\e |_{\d \O} = 0$ versus $\uu \cdot \vc n |_{\d \O} = 0$. To overcome this issue, we will construct a boundary layer corrector, giving precisely the worse convergence rates in the case of a bounded domain. For more details on this, we refer to \cite{Shen22, HNO24}.
	\end{remark}

	\begin{remark}
	We remark that in order to show our results, the regularity classes required in Theorem~\ref{limitTorus} can be weakened to
	\begin{gather*}
	\vr \in L^2(0,T; L^3(\O)), \quad \d_t(\vr^2) \in L^1(0,T; L^\frac{3}{2}(\O)), \quad	\Grad \vr \in L^\infty(0,T; L^\infty(\O)), \\ 
	\uu \in L^2(0,T; W^{1,\infty}(\O)), \quad \d_t \uu \in L^1(0,T; W^{2,2}(\Omega)).
	\end{gather*}
	\end{remark}
	
	\subsection{Local problem and modified Poincar\'e inequality}\label{Local problem}
	A crucial ingredient in the homogenization process will be the so-called local problem, which we define as
	\begin{align*}
		\begin{cases}
			-\Delta_x \vv^{i}+\nabla_x q^{i}=0 & \mbox{in } \R^{3} \setminus T_0,\\
			\dive  \vv^{i}=0 & \mbox{in } \R^{3} \setminus T_0,\\
			\vv^{i}=0 & \mbox{on } \d T_0,\\
			\vv^{i}=\vc e^{i} & \mbox{at infinity},
		\end{cases}
	\end{align*}
	where $\{\vc e^{i}\}_{i=1,2,3}$ is the canonical basis of $\R^{3}$. 
	In order to go from $\R^3 \setminus T_0$ to the domain $\O_\e$, we define functions $(\vv^{i}_{\e},q^{i}_{\e})$ as follows: in cubes $Q_{k, \e}$ that intersect with the boundary of $\O$ (this is of course just relevant if $\O \neq \mathbb{T}^3$), 
	\be
	\vv^{i}_{\e}=\mathbf e^{i},\quad q^{i}_{\e}=0,\quad  \mbox{in} \quad Q_{k, \e}\cap\O \ \ \mbox{if} \ \ Q_{k, \e}\cap\d\O\neq\emptyset;\nn
	\ee
	in cubes $Q_{k, \e}$ whose closures are contained in $\O$,
	\begin{align*}
		\begin{cases}
			\vv^{i}_{\e}=\mathbf e^{i},\quad q^{i}_{\e}=0 & \mbox{in } Q_{k, \e}\backslash B(x_{k, \e},\frac{\e}{2}),\\
			-\Delta_x \vv^{i}_{\e}+\nabla_x q^{i}_{\e}=0,\quad \dive \vv_{\e}^{i}=0 & \mbox{in } B(x_{k, \e},\frac{\e}{2})\backslash B(x_{k, \e},\frac{\e}{4}),\\
			\vv^{i}_{\e}(x)=\vv^{i}\left(\frac{x-x_{k, \e}}{\e^{\a}}\right),\quad q^{i}_{\e}(x)=\frac{1}{\e^{\a}}q^{i}\left(\frac{x-x_{k, \e}}{\e^{\a}}\right) & \mbox{in } B(x_{k, \e},\frac{\e}{4})\backslash T_{\e,k},\\
			\vv^{i}_{\e}=0,\quad q^{i}_{\e}=0 & \mbox{in } T_{\e,k},
		\end{cases}
	\end{align*}
	together with matching (Dirichlet) boundary conditions on each interface. From \cite[Proposition~A.1]{Lu-Oschmann}, we can state the following
		\begin{proposition} \label{TestFct}
		Let $1< \alpha < 3$. For any $\e \in (0,1)$ fixed, set the tensor-valued function $W_{\e}=(\vv_{\e}^{1}, \vv_{\e}^{2}, \vv_{\e}^{3})$ and the vector-valued function $\vq_{\e}=(q_{\e}^1, q_{\e}^2, q_{\e}^3)^T$. Then 
		\begin{align}
			W_{\e}\in W^{1,\infty}(\Omega; \mathbb{R}^{3\times 3}), \quad  &\vq_{\e} \in L^{\infty}(\Omega; \mathbb{R}^3), \notag \\
			\dive \vv_{\e}^i=0 \mbox{ in } \Omega,\quad  &\vv_{\e}^i=0 \mbox{ in } T_{\e, k}, \label{c2}
		\end{align}
		for any $i \in \{1,2,3\}$ and any $k \in K_\e$, and
		\begin{align}
			\| W_{\e} - \Id\|_{L^p(\Omega)} \lesssim \e^{\min \left\{1, \frac{3}{p}\right\}(\alpha-1)}\quad &\mbox{for any} \quad  p \in [1,\infty], \label{estVei}\\[0.1cm]
			\| \Grad \vv_{\e}^i\|_{L^p(\Omega)} + \| q_{\e}^i \|_{L^p(\Omega)} \lesssim \e^{\frac3p(\alpha-1)-\alpha} \quad &\mbox{for any} \quad p \in (\frac32, \infty], \label{estGradv}
		\end{align}
		for any $i\in \{1,2,3\}$.
		
		Moreover, for any sequence $\{ \vw_{\e} \}_{\e>0} \subset W^{1,2}(\Omega)$ satisfying $\vw_{\e} =0$ in $\bigcup_{k \in K_\e} T_{\e, k}$, 
		\begin{equation*}
			\sigma_{\e}\| \Grad \vw_{\e} \|_{L^2(\Omega)} \lesssim 1,
		\end{equation*}
		\begin{equation*}
			\vw_{\e} \rightharpoonup \vw \quad \mbox{weakly in } L^2(\Omega),
		\end{equation*}
		and for any $\varphi \in \mathcal{D}(\Omega)$, we have 
		\begin{equation} \label{M1}
			\sigma_{\e}^2 \langle \Grad q_{\e}^i - \Delta_x \vv_{\e}^i; \ \varphi \vw_{\e} \rangle_{W^{-1, 2}(\Omega), W^{1,2}(\Omega)} \to \int_{\Omega} \varphi \ (M_0 \textbf{\textup{e}}^i) \cdot \vw \ \dx, 
		\end{equation}
		where the (symmetric and positive definite) resistance matrix $M_0=[M_{ij}]_{i,j=1,2,3}$ is defined as 
		\begin{equation*}
			\langle M_{ij}, \varphi \rangle_{\mathcal{D}', \mathcal{D}} = \lim_{\e \to 0} \sigma_{\e}^2 \int_{\Omega} \varphi \ \nabla_x \vv_{\e}^i: \nabla_x \vv_{\e}^j \ \dx, \quad \mbox{for any } \varphi \in \mathcal{D}(\Omega).
		\end{equation*}
		Lastly, there holds the error estimate
		\begin{equation}\label{errorM}
		\|\sigma_\e^2(\Grad \vq_\e - \Delta_x W_\e) - M_0\|_{W^{-1,2}(\Omega)} \lesssim \e.
		\end{equation}
	\end{proposition}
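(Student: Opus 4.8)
The plan is to view the statement as a quantitative sharpening of the classical cell-problem (oscillating test function) machinery; everything except the error bound \eqref{errorM} is, up to the precise range of $\a$, contained in \cite[Proposition~A.1]{Lu-Oschmann}, which itself rests on Tartar, Cioranescu--Murat, Allaire and Lu. First I would recall the construction: the exterior Stokes problem for $(\vv^i,q^i)$ has, by classical theory, a unique finite‑energy solution with decay $|\vv^i-\mathbf e^i|\lesssim|x|^{-1}$, $|\Grad\vv^i|+|q^i|\lesssim|x|^{-2}$ at infinity and $C^{2,\b}$‑regularity up to $\d T_0$ (recall $T_0\in C^{2,\b}$); rescaling by $\e^\a$ and patching in the homogeneous Stokes corrector on $B(x_{k,\e},\tfrac\e2)\setminus B(x_{k,\e},\tfrac\e4)$ with matching Dirichlet data produces a globally Lipschitz, divergence‑free $\vv_\e^i$ vanishing on the holes, which is \eqref{c2}.

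Next I would obtain \eqref{estVei}--\eqref{estGradv} by feeding the profile decay into each cell: on $B(x_{k,\e},\tfrac\e2)\setminus T_{\e,k}$ one has $|\vv_\e^i-\mathbf e^i|\lesssim\e^\a|x-x_{k,\e}|^{-1}$ and $|\Grad\vv_\e^i|+|q_\e^i|\lesssim\e^\a|x-x_{k,\e}|^{-2}$, hence on the one hand
\[
\|\vv_\e^i-\mathbf e^i\|_{L^p(Q_{k,\e})}^p\lesssim\e^{\a p}\int_{\e^\a}^{\e}r^{2-p}\,\mathrm{d}r,
\]
and on the other
\[
\|\Grad\vv_\e^i\|_{L^p(Q_{k,\e})}^p+\|q_\e^i\|_{L^p(Q_{k,\e})}^p\lesssim\e^{\a p}\int_{\e^\a}^{\e}r^{2-2p}\,\mathrm{d}r;
\]
multiplying by $|K_\e|\lesssim\e^{-3}$ reproduces the exponents (the threshold $p=\tfrac32$ is precisely where the second integral switches from being controlled at the outer to the inner endpoint $\e^\a$). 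For the ``moreover'' part I would first prove the modified Poincar\'e inequality $\|\vw_\e\|_{L^2(Q_{k,\e})}\lesssim\s_\e\|\Grad\vw_\e\|_{L^2(Q_{k,\e})}$, by combining the usual Poincar\'e inequality with the capacity bound $|\,\overline{\vw_\e}\,|^2\,\mathrm{cap}(T_{\e,k})\lesssim\|\Grad\vw_\e\|_{L^2(Q_{k,\e})}^2$ and $\mathrm{cap}(T_{\e,k})\sim\e^\a$, then summing in $k$ to get $\|\vw_\e\|_{L^2(\O)}\lesssim\s_\e\|\Grad\vw_\e\|_{L^2(\O)}$. Testing $\Grad q_\e^i-\Delta_x\vv_\e^i$ against $\vp\vw_\e$, integrating by parts and using $\dive\vv_\e^i=0$, $\vv_\e^i\equiv\mathbf e^i$ off the holes, and $\vw_\e\weak\vw$, the pairing reduces to $\s_\e^2\int_\O\vp\,\Grad\vv_\e^i:\Grad\vv_\e^j$, whose limit defines $M_{ij}$; symmetry and positive definiteness follow from the energy identity of the exterior problem.

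The remaining, genuinely new point is \eqref{errorM}, which I read as the quantitative form of \eqref{M1}: for $\vw_\e$ as above with $\s_\e\|\Grad\vw_\e\|_{L^2}\lesssim1$ and $\vp\in\mathcal D(\O)$, the defect $\s_\e^2\langle\Grad q_\e^i-\Delta_x\vv_\e^i,\vp\vw_\e\rangle-\int_\O\vp\,(M_0\mathbf e^i)\cdot\vw_\e$ should be $O(\e)$. I would integrate by parts and split the resulting integrals over $\bigcup_k\big(B(x_{k,\e},\tfrac\e2)\setminus T_{\e,k}\big)$ into the profile part $B(x_{k,\e},\tfrac\e4)\setminus T_{\e,k}$ and the transition annulus $B(x_{k,\e},\tfrac\e2)\setminus B(x_{k,\e},\tfrac\e4)$. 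On the annuli a direct scaling gives $\s_\e^2\big(\|\Grad\vv_\e^i\|_{L^2(\text{annuli})}^2+\|q_\e^i\|_{L^2(\text{annuli})}^2\big)\lesssim\e^{\a-1}$, so by Cauchy--Schwarz their contribution is $\lesssim\s_\e\cdot\e^{(\a-1)/2}=\e$. On the profile part one replaces $\vv_\e^i$ by the exact rescaled Stokes profile (again an $O(\e)$ error through the $\e^{\a-1}$-energy of the transition), replaces $\vp\vw_\e$ by a cell‑adapted approximation (an $O(\e)$ error via the modified Poincar\'e inequality and the per‑cell mass bound $\s_\e^2\|\Grad q_\e^i-\Delta_x\vv_\e^i\|_{W^{-1,1}(Q_{k,\e})}\lesssim\e^3$), and recognizes the leading sum as a Riemann sum for $\int_\O\vp\,(M_0\mathbf e^i)\cdot\vw_\e$ with grid error $O(\e)$; collecting everything yields \eqref{errorM}. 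On a bounded domain the cells meeting $\d\O$ require a separate treatment, which is where the boundary‑layer corrector mentioned after Theorem~\ref{limitDomain} enters.

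I expect this last step to be the main obstacle: one must organize the three error sources (replacement of the cell solution, replacement of the test field, grid discretization) so that none exceeds $O(\e)$, and in particular the reduction of the merely‑$W^{1,2}$ test field to a cell‑constant one cannot be done in $L^\infty$ and must go through the capacity‑weighted Poincar\'e estimate. Keeping track of the transition‑layer and interface contributions --- which a priori carry the same order of ``mass'' as the hole itself --- so that they collapse to the required $O(\e)$ rather than a mere $o(1)$ is the delicate bookkeeping.
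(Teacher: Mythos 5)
The paper offers no proof of this proposition at all: it is quoted verbatim (including the error estimate \eqref{errorM}) from \cite[Proposition~A.1]{Lu-Oschmann}, so there is no internal argument to compare against. Your reconstruction of the construction, of \eqref{c2}, and of the scaling computations behind \eqref{estVei}--\eqref{estGradv} is correct (the exponents all check out, including the $p=\frac32$ threshold and the $\min\{1,3/p\}$ dichotomy at $p=3$), and the route to \eqref{M1} via the capacity-weighted Poincar\'e inequality and integration by parts is the standard Allaire/Tartar argument.

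There is, however, a genuine gap in your treatment of \eqref{errorM}. You read it as a statement about the pairing with the special test fields $\vp\vw_\e$, where $\vw_\e$ vanishes on the holes and $\s_\e\|\Grad\vw_\e\|_{L^2}\lesssim 1$. But \eqref{errorM} is an operator-norm bound in $W^{-1,2}(\O)$, i.e.\ a supremum over the \emph{full} unit ball of $W^{1,2}_0(\O)$: the admissible test functions need not vanish on $\bigcup_k T_{\e,k}$, and the fields $\vp\vw_\e$ you use have $W^{1,2}$-norm as large as $\s_\e^{-1}$, so neither statement implies the other. Your own annulus estimate betrays the inconsistency: the bound $\s_\e\cdot\e^{(\a-1)/2}=\e$ obtained by Cauchy--Schwarz tacitly assumes $\|\Grad\phi\|_{L^2}\lesssim1$; against $\vp\vw_\e$ it would only give $\e\,\s_\e^{-1}=\e^{(\a-1)/2}$. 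More substantively, once the test function is allowed not to vanish on the holes, the distribution $\Grad q^i_\e-\Delta_x\vv^i_\e$ acts through stress-jump layers on $\d T_{\e,k}$ whose total mass per cell is exactly of the leading order $\s_\e^{-2}\e^3$; these terms are invisible in your bookkeeping (which is organized around $\vw_\e=0$ on the holes) and must be shown to combine with the interface layers on $\d B(x_{k,\e},\e/4)$ and $\d B(x_{k,\e},\e/2)$ to reproduce $M_0$ up to $O(\e)$ in $W^{-1,2}$. This is the content of \cite[Proposition~A.1]{Lu-Oschmann} (building on Allaire's Lemma~2.2.1-type decomposition into sphere-supported measures), and your sketch as written does not reach it.
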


\begin{remark}
In terms of the local functions $(\vv^i, q^i)$, the matrix $M_0$ can be directly computed via
\begin{align}\label{M0}
M_{ij} = \int_{\R^3 \setminus T_0} \Grad \vv^i : \Grad \vv^j \dx,
\end{align}
showing in particular that $M_0$ is symmetric and positive definite, hence invertible\footnote{The inverse $K_0 = M_0^{-1}$ is usually known as the \emph{permeability} of the medium.}.
\end{remark}	
	
	
	In the case of large holes, one can benefit from the zero boundary condition on the holes and obtain the following improvement of Poincar\'e's inequality (see \cite[Lemma~3.4.1]{ALL-NS2}):
	\begin{lemma}
		Let $\O_{\e}$ be the perforated domain defined by \eqref{OmegaEps} with $\e \in (0,1)$, $a_{\e}=\e^{\a}$, $1<\a<3$, and $\s_{\e}=\e^{\frac{3-\a}{2}}$. There exists a constant $C$ independent of $\e$ such that
		\be\label{poinc}
		\|\vu\|_{L^{2}(\O_\e)}\leq C \s_{\e} \|\nabla_x \vu\|_{L^{2}(\O_{\e})},\quad \mbox{for any}\quad \vu\in W_{0}^{1,2}(\O_\e).
		\ee
	\end{lemma}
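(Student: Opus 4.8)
The plan is to reduce the global estimate to a scale-invariant inequality on a single periodicity cell, in the spirit of \cite[Lemma~3.4.1]{ALL-NS2}. First extend $\vu$ by zero to the holes and to $\R^3 \setminus \O$; the extension, still called $\vu$, lies in $W_0^{1,2}(\O)$, vanishes a.e.\ on $\bigcup_{k \in K_\e} T_{\e,k}$, and leaves both sides of \eqref{poinc} unchanged. In the torus case $\O = \bigcup_{k \in K_\e} Q_{k,\e}$ up to a null set, so \eqref{poinc} follows by summing a cell estimate
\[
\|\vu\|_{L^2(Q_{k,\e})}^2 \leq C\,\s_\e^2\,\|\nabla_x \vu\|_{L^2(Q_{k,\e})}^2, \qquad C \text{ independent of } k, \e,
\]
over $k \in K_\e$. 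In the bounded case, $\O = \big( \bigcup_{k \in K_\e} Q_{k,\e} \big) \cup S_\e$ with $S_\e := \O \setminus \bigcup_{k \in K_\e} \overline{Q}_{k,\e}$, and every point of $S_\e$ lies in a lattice cube whose closure meets $\R^3 \setminus \O$, so $S_\e \subset \{ x \in \O : \mathrm{dist}(x, \d \O) \leq \sqrt{3}\, \e \}$; there we will use that $\vu$ vanishes on $\d \O$.

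To prove the cell estimate, fix $k \in K_\e$, put $\delta := \e^{\a - 1} \in (0,1)$, and rescale $x = x_{\e,k} + \e y$, $v(y) := \vu(x_{\e,k} + \e y)$ on $Y := \big( -\tfrac12, \tfrac12 \big)^3$. Since $\|\vu\|_{L^2(Q_{k,\e})}^2 = \e^3 \|v\|_{L^2(Y)}^2$, $\|\nabla_x \vu\|_{L^2(Q_{k,\e})}^2 = \e\, \|\nabla_y v\|_{L^2(Y)}^2$, and $\s_\e^2 = \e^{3 - \a} = \e^2 \delta^{-1}$, the cell estimate is equivalent to
\[
\|v\|_{L^2(Y)}^2 \leq C\, \delta^{-1}\, \|\nabla_y v\|_{L^2(Y)}^2 \qquad \text{for } v \in W^{1,2}(Y) \text{ with } v = 0 \text{ on } \delta T_0 .
\]
Choose $c_1 \in (0, \tfrac18]$ with $B(0, c_1) \subset T_0$ (possible since $0$ lies in the interior of $T_0$), so that $v = 0$ on $B(0, c_1 \delta)$. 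For a.e.\ $\omega \in \mathbb{S}^2$, with $R(\omega) \in \big[ \tfrac12, \tfrac{\sqrt 3}{2} \big]$ the distance from $0$ to $\d Y$ in direction $\omega$, the fundamental theorem of calculus and $v(c_1 \delta\, \omega) = 0$ give $v(r \omega) = \int_{c_1 \delta}^{r} \nabla v(s\omega) \cdot \omega \, ds$ for $c_1 \delta < r < R(\omega)$; by Cauchy--Schwarz with weight $s^2$,
\[
|v(r\omega)|^2 \leq \Big( \int_{c_1 \delta}^{R(\omega)} |\nabla v(s\omega)|^2 s^2 \, ds \Big) \int_{c_1 \delta}^{r} \frac{ds}{s^2} \leq \frac{1}{c_1 \delta} \int_{c_1 \delta}^{R(\omega)} |\nabla v(s\omega)|^2 s^2 \, ds .
\]
Multiplying by $r^2$, integrating in $r$ over $(c_1\delta, R(\omega))$ (so $\int r^2\,dr \leq \tfrac{\sqrt3}{8}$) and then in $\omega$ over $\mathbb{S}^2$, and returning to Cartesian coordinates, yields $\|v\|_{L^2(Y)}^2 \leq \tfrac{\sqrt3}{8 c_1}\, \delta^{-1} \|\nabla_y v\|_{L^2(Y)}^2$; the pointwise manipulations are justified by approximating $v$ by smooth functions.

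Summing over $k \in K_\e$ proves \eqref{poinc} on the torus. In the bounded case it remains to estimate $\|\vu\|_{L^2(S_\e)}$: for $\e$ small, $S_\e$ lies in a tubular neighborhood of $\d \O$ of width $O(\e)$ on which the $C^{2,\b}$ boundary provides normal coordinates, so integrating $|\vu|^2$ along the normal segments (on which $\vu$ vanishes at the boundary) gives $\|\vu\|_{L^2(S_\e)}^2 \leq C \e^2 \|\nabla_x \vu\|_{L^2(\O)}^2$; since $\a > 1$ implies $\e^2 \leq \s_\e^2$ for $\e \in (0,1)$, this term is absorbed into the right-hand side of \eqref{poinc}. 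The one point requiring care is obtaining exactly the power $\delta^{-1}$ in the cell estimate: a naive Poincaré inequality on $Y \setminus \delta T_0$ would give either too large a blow-up in $\delta^{-1}$ or merely a logarithmic one, neither of which matches $\s_\e^2 = \e^2 \delta^{-1}$, and it is precisely the weighted radial integration — exploiting the three-dimensional Jacobian $s^2$ against the integrable tail of $\int ds/s^2$ — that produces the correct rate. Everything else is routine scaling together with a standard boundary-layer Poincaré inequality.
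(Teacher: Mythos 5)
Your proof is correct. The paper itself gives no argument for this lemma --- it simply cites Allaire \cite[Lemma~3.4.1]{ALL-NS2} --- and what you have written is, in substance, the standard proof of that cited result: reduce to a single cell by zero-extension and summation, rescale the cell to unit size so that the hole has diameter $\delta=\e^{\a-1}$, and capture the capacity scaling of a small obstacle in $\R^3$ by the weighted radial Cauchy--Schwarz estimate (the Jacobian $s^2$ against the integrable tail $\int s^{-2}\,ds\le (c_1\delta)^{-1}$), which yields exactly the constant $\delta^{-1}$, i.e.\ $\s_\e^2=\e^2\delta^{-1}$ after undoing the scaling; the boundary strip is then handled by the usual $O(\e)$ tubular-neighborhood Poincar\'e inequality and absorbed since $\e^2\le\s_\e^2$ for $\a>1$. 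Two harmless points worth making explicit: you implicitly use that $Y$ is star-shaped with respect to the (rescaled) hole center $0$, i.e.\ that $x_{\e,k}$ is uniformly interior to $Q_{k,\e}$ --- this is guaranteed by the paper's assumption $B(x_{\e,k},b_0a_\e)\Subset Q_{k,\e}$ and only changes the numerical bounds on $R(\omega)$ --- and the a.e.-ray absolute continuity of the $W^{1,2}$ representative (together with $v=0$ a.e.\ on the open ball $B(0,c_1\delta)$, which forces $v(c_1\delta\,\omega)=0$ for a.e.\ $\omega$) is the precise justification of the fundamental-theorem step.
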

	
	\section{Uniform bounds}\label{sec:unifbds}
	\subsection{Bounds for the velocity}
	
	Using the energy inequality \eqref{enIneq} and the assumptions on the initial data \eqref{init2}, we obtain
	\ba\label{jedna}
	&\s_\e^4 \int_{\O_\e} \vr_\e |\uu_\e|^2(\tau) \dx + \mu \s_\e^2 \int_0^\tau \int_{\O_\e} |\nabla_x \uu_\e|^2 \dx \dt \\
	&\leq \s_\e^4 \int_{\O_\e} \vr_\e^0 |\uu_\e^0|^2 \dx + \int_0^\tau \int_{\O_\e} \vr_\e \vf \cdot \uu_\e \dx \dt \\
	&\leq \s_\e^4 \|\sqrt{\vr_\e^0} \uu_\e^0\|_{L^2(\O_\e)}^2 + \|\vr_\e\|_{L^2((0,T) \times \O_\e)} \|\vf\|_{L^\infty(0,T; L^\infty(\O_\e))} \|\uu_\e\|_{L^2((0,T) \times \O_\e)} \\
	&\leq C + C \s_\e \|\nabla_x \uu_\e\|_{L^2((0,T) \times \O_\e)} \\
	&\leq C + \frac{\mu}{2} \s_\e^2 \|\nabla_x \uu_\e\|_{L^2((0,T) \times \O_\e)}^2,
	\ea
	where we used that $\|\vr_\e\|_{L^2((0,T) \times \O_\e)} = \|\vr_\e^0 \|_{L^2(\O_\e)} \leq C$ independently of $\e$. Absorbing the very last term by the left-hand side of \eqref{jedna} yields by Poincar\'e's inequality \eqref{poinc}
	\ba\label{unifBds}
	\s_\e^2 \|\sqrt{\vr_\e} \uu_\e\|_{L^\infty(0,T; L^2(\O_\e))} + \|\uu_\e\|_{L^2((0,T) \times \O_\e)} + \s_\e \|\nabla_x \uu_\e\|_{L^2((0,T) \times \O_\e)} \leq C.
	\ea
	
	In particular, $\|\uu_\e\|_{L^2((0,T) \times \O_\e)} = \mathcal{O}(1)$, and thus up to subsequences,
	\begin{align*}
		\tilde \uu_\e \weak \uu \mbox{ weakly in } L^2((0,T) \times \O).
	\end{align*}
	
	In order to proceed, we will follow an idea of Mikeli\'c (see \cite{Mik91}) to consider the time-integrated variables and equations instead of the original ones; therefore, we set
	\begin{align}\label{UGF}
		\vU_\e = \int_0^t \uu_\e(s, \cdot) \, {\rm d}s, && \Psi_\e=\int_0^t  (\vr_\e \uu_\e \otimes \uu_\e)(s, \cdot)\, {\rm d}s, && \mathbf{F}_\e=\int_0^t (\vr_\e \mathbf{f})(s, \cdot) \, {\rm d}s.
	\end{align}
	It follows that $\dive \vU_\e = 0$ and 
	\begin{align*}
		\vU_\e \in C([0,T];\ W^{1,2}_0 (\O_\e)), && \Psi_\e \in C([0,T]; L^{3}(\O_\e)), && \mathbf{F_{\e}}\in C([0,T]; L^2(\O_\e)).
	\end{align*}
	Moreover, from the weak convergence $\tilde{\uu}_\e \weak \uu$ we deduce that 
	\ba\label{cov-Ue}
	\tilde{\vU}_\e \weak \vU = \int_{0}^{t} \uu(s, \cdot) \,{\rm d}s  \text{ weakly in } L^2((0,T) \times \O).
	\ea
	
	Combining this with the definition of $\vU_\e$ immediately gives
	\begin{align*}
		\|\vU_\e\|_{W^{1,2}(0,T; L^2(\O_\e))} + \s_\e \|\nabla_x \vU_\e\|_{W^{1,2}(0,T; L^2(\O_\e))} \leq C.
	\end{align*}
	
	Additionally, classical theory of Stokes equations (see \cite[Chapter~3]{Temam77}) ensures the existence of
	\begin{equation*}
		P_\e \in C_{\rm weak}([0,T]; L^2_{0}(\O_\e))
	\end{equation*}
	such that for each $t\in [0,T]$,
	\begin{equation}\label{Pe-def}
		\nabla_x P_\e=\mathbf{F}_\e- \s_\e^4 (\uu_\e-\uu_{\e0}) - \s_\e^4 \dive \Psi_\e + \s_\e^2 \mu \Delta_x \vU_\e \quad \mbox{in}  \ \mathcal{D}'(\O_{\e}).
	\end{equation}
	
	\subsection{Pressure extension}\label{sec:pressExt}
	Uniform pressure bounds are not as easy to obtain. A naive idea would be to set $\tilde{P}_\e = \int_0^t \tilde{p}_\e(s, \cdot) \ {\rm d}s$, however, we do not know whether $\tilde{p}_\e$ is uniformly bounded in $L^2((0,T) \times \O)$. In order to circumvent this issue, we use a more subtle way to extend the pressure from $\O_\e$ to $\O$, and follow \cite[Lemma~3.2]{Mik91}. To this end, we state the existence of a suitable restriction operator:
	\begin{lemma}\label{Restriction operator}
		For $\O_{\e}$ defined in Section \ref{Problem formulation}, there exists a bounded linear restriction operator $R_{\e}: W_{0}^{1,2}(\O;\R^{3})\rightarrow W_{0}^{1,2}(\O_\e;\R^{3})$ such that
		\be
		\vu\in W_{0}^{1,2}(\O_{\e}; \R^{3})\Longrightarrow R_{\e}(\tilde{\vu})=\vu\ \ \mbox{in} \ \ \ \O_{\e},\nn
		\ee
		\be
		\vu\in W_{0}^{1,2}(\O; \R^{3}),\quad \dive\vu=0\ \ \mbox{in}\ \ \O\Longrightarrow \dive R_{\e}(\vu)=0\ \ \mbox{in} \ \ \ \O_{\e},\nn
		\ee
		\be
		\vu\in W_{0}^{1,2}(\O; \R^{3})\Longrightarrow\|\nabla_x R_{\e}(\vu)\|_{L^{2}(\O_\e)}\leq C(\|\nabla_x \vu\|_{L^{2}(\O)} + \s_{\e}^{-1} \|\vu\|_{L^{2}(\O)}).\nn
		\ee
		For each $\bvp \in L^{q}(0,T;W_{0}^{1,2}({\O}))$ with $1<q<\infty$, the restriction $R_{\e}(\bvp)$ is taken only on the spatial variable, which means 
		\be
		R_{\e}(\bvp)(\cdot,t) = R_{\e}(\bvp(\cdot,t))(\cdot), \quad \mbox{for any} \quad t\in (0,T).\nn
		\ee
		Then, $R_{\e}$ maps ${L^{q}(0,T;W_{0}^{1,2}(\O; \R^3))}$ onto ${L^{q}(0,T;W_{0}^{1,2}(\O_{\e}; \R^3))}$ and there holds the estimate 
		\be\label{restOp1}
		\|R_\e(\bvp)\|_{L^q(0,T; L^2(\O_\e))} + \s_\e \|\nabla_x R_{\e}(\bvp)\|_{L^{q}(0,T;L^{2}({\O_{\e}}))} \leq C ( \|\bvp\|_{L^q(0,T; L^2(\O))} + \s_\e \|\nabla_x \bvp\|_{L^{q}(0,T;L^{2}({\O}))} ).
		\ee
	\end{lemma}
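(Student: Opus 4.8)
The plan is to construct first a purely spatial operator $R_\e\colon W_0^{1,2}(\O;\R^3)\to W_0^{1,2}(\O_\e;\R^3)$ that is linear, equals the identity on zero–extensions of $W_0^{1,2}(\O_\e)$–fields, preserves the divergence–free constraint, and satisfies the quantitative bounds
\[
\|R_\e(\vu)\|_{L^2(\O_\e)}\lesssim\|\vu\|_{L^2(\O)}+\s_\e\|\Grad\vu\|_{L^2(\O)},\qquad \|\Grad R_\e(\vu)\|_{L^2(\O_\e)}\lesssim\|\Grad\vu\|_{L^2(\O)}+\s_\e^{-1}\|\vu\|_{L^2(\O)}.
\]
Once this is done, the time–dependent statement is routine: one puts $(R_\e\bvp)(\cdot,t):=R_\e(\bvp(\cdot,t))$, which is Bochner–measurable since $R_\e$ is linear and bounded; \eqref{restOp1} follows by applying the two displayed estimates for a.e.\ $t$ (the second one multiplied by $\s_\e$) and taking $L^q$–norms in $t$; and $R_\e$ maps onto $L^q(0,T;W_0^{1,2}(\O_\e))$ because $R_\e(\tilde{\vc g})=\vc g$ for every $\vc g\in L^q(0,T;W_0^{1,2}(\O_\e))$ by the identity property. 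I would follow the classical device of Tartar and Mikeli\'c, see \cite{Mik91} and \cite[Section~3.4]{ALL-NS2}; the only point needing care is the bookkeeping of the two length scales $a_\e=\e^\a\ll\e$ so that the sharp factor $\s_\e^{-1}$, rather than the crude $\e^{-1}$, comes out.

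\emph{Construction.} Since $K_\e$ only indexes cubes with $\overline Q_{k,\e}\subset\O$, the balls $B_k:=B(x_{k,\e},\tfrac{\e}{4})$ are pairwise disjoint, compactly contained in $\O$, and away from $\partial\O$. Set $R_\e(\vu):=\vu$ on $\O\setminus\bigcup_{k\in K_\e}\overline{B_k}$; on each perforated cell $D_k:=B_k\setminus T_{\e,k}$ set $R_\e(\vu):=\vu-\vc z_k$, where $(\vc z_k,\pi_k)$ solves the Stokes system
\[
-\Delta_x\vc z_k+\Grad\pi_k=0,\quad \dive\vc z_k=\dive\vu+c_k\ \text{in}\ D_k,\qquad \vc z_k=\vu\ \text{on}\ \partial T_{\e,k},\quad \vc z_k=0\ \text{on}\ \partial B_k,
\]
and $c_k$ is the unique constant for which the compatibility condition $\int_{D_k}\dive\vc z_k=-\int_{T_{\e,k}}\dive\vu$ holds; note $c_k=0$ whenever $\dive\vu=0$, and $\|\dive\vc z_k\|_{L^2(D_k)}\lesssim\|\Grad\vu\|_{L^2(B_k)}$. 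Since $\vc z_k=0$ on $\partial B_k$, the field $R_\e(\vu)$ is continuous across each $\partial B_k$, hence in $W^{1,2}(\O_\e)$; since $R_\e(\vu)=0$ on $\partial T_{\e,k}$ and on $\partial\O$, it is in $W_0^{1,2}(\O_\e)$. Linearity is clear, and $\dive\vu=0$ forces $c_k=0$, $\dive\vc z_k=0$, hence $\dive R_\e(\vu)=0$. Finally, if $\vu=\tilde{\vc w}$ with $\vc w\in W_0^{1,2}(\O_\e)$, then $\vu\equiv0$ in each $T_{\e,k}$ and $\dive\vu=0$, so the Stokes data for $\vc z_k$ vanishes, whence $\vc z_k\equiv0$ and $R_\e(\tilde{\vc w})=\vc w$. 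It is precisely for this last property that $\vc z_k$ must be defined through a boundary value problem (zero data $\Rightarrow\vc z_k\equiv0$) rather than by an explicit cutoff of $\vu$, which would always leave a nonzero remainder in the bulk of $D_k$.

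\emph{Reduction to a cell estimate.} Everything then reduces to the bounds, uniform in $\e$ and $k$,
\[
\|\Grad\vc z_k\|_{L^2(D_k)}\lesssim\|\Grad\vu\|_{L^2(B_k)}+\sqrt{a_\e}\,|(\vu)_{B_k}|,\qquad \|\vc z_k\|_{L^2(D_k)}\lesssim\e\|\Grad\vu\|_{L^2(B_k)}+\e\sqrt{a_\e}\,|(\vu)_{B_k}|,
\]
with $(\vu)_{B_k}$ the mean of $\vu$ over $B_k$. Granting these, and using that $R_\e(\vu)$ equals $\vu$ off the balls and $\vu-\vc z_k$ on $D_k$, one squares, sums over $k\in K_\e$, and invokes $\sum_k|B_k|\,|(\vu)_{B_k}|^2\le\|\vu\|_{L^2(\O)}^2$ together with $|B_k|\sim\e^3$, $a_\e=\e^\a$, $\s_\e=\e^{(3-\a)/2}$ (so $a_\e\e^{-3}=\s_\e^{-2}$) to obtain
\[
\|\Grad R_\e(\vu)\|_{L^2(\O_\e)}^2\lesssim\|\Grad\vu\|_{L^2(\O)}^2+a_\e\e^{-3}\|\vu\|_{L^2(\O)}^2=\|\Grad\vu\|_{L^2(\O)}^2+\s_\e^{-2}\|\vu\|_{L^2(\O)}^2,
\]
and likewise $\|R_\e(\vu)\|_{L^2(\O_\e)}^2\lesssim\|\vu\|_{L^2(\O)}^2+\s_\e^2\|\Grad\vu\|_{L^2(\O)}^2$, using $\e\le\s_\e$ (valid since $\a>1$); alternatively, the $L^2(\O_\e)$–bound on $R_\e(\vu)$ follows from the gradient bound and the modified Poincar\'e inequality \eqref{poinc}.

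\emph{The cell estimate; the main obstacle.} To prove the cell bounds I would compare $\vc z_k$ with an explicit extension $\vc E_k$ of its boundary data. Decompose $\vu=(\vu)_{B_k}+(\vu-(\vu)_{B_k})$ on $B_k$ and take $\vc E_k:=\chi_k(\vu-(\vu)_{B_k})+\phi_k(\vu)_{B_k}$, where $\chi_k$ is an ordinary cube–scale cutoff ($\chi_k\equiv1$ near $\partial T_{\e,k}$, $\chi_k\equiv0$ near $\partial B_k$, $|\Grad\chi_k|\lesssim\e^{-1}$) and $\phi_k$ is a \emph{capacitary–type} cutoff ($\phi_k\equiv1$ on a fixed neighbourhood of $T_{\e,k}$, $\phi_k\equiv0$ outside $B_k$) chosen with $\|\Grad\phi_k\|_{L^2(D_k)}^2\lesssim\mathrm{cap}_{\R^3}(T_{\e,k})\lesssim a_\e$ — obtained by rescaling $x\mapsto(x-x_{k,\e})/a_\e$, using the $O(1/|y|)$ capacitary potential of $T_0$, and gluing it to $0$ over the dyadic annuli between scales $a_\e$ and $\e$, each contributing $\lesssim a_\e$ to the Dirichlet energy. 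Then $\vc E_k=\vu$ on $\partial T_{\e,k}$, $\vc E_k=0$ on $\partial B_k$, and Poincar\'e--Wirtinger on $B_k$ gives $\|\Grad\vc E_k\|_{L^2(D_k)}\lesssim\|\Grad\vu\|_{L^2(B_k)}+\sqrt{a_\e}\,|(\vu)_{B_k}|$ and $\|\vc E_k\|_{L^2(D_k)}\lesssim\e\|\Grad\vu\|_{L^2(B_k)}+\e\sqrt{a_\e}\,|(\vu)_{B_k}|$. The difference $\vc r_k:=\vc z_k-\vc E_k$ solves a Stokes problem on $D_k$ with \emph{homogeneous} boundary data, forcing $\Delta_x\vc E_k$, and divergence $\dive\vc z_k-\dive\vc E_k$; the standard Stokes and Bogovskii estimates — uniform in $\e$, since after rescaling $D_k$ to unit size the hole has vanishing diameter $a_\e/\e$, a covering argument bounding the relevant operator norms — then yield $\|\Grad\vc r_k\|_{L^2(D_k)}\lesssim\|\Grad\vc E_k\|_{L^2(D_k)}+\|\dive\vc z_k\|_{L^2(D_k)}$, and a Poincar\'e inequality on $D_k$ (vanishing trace on $\partial B_k$, constant $\lesssim\e$) the corresponding $L^2$–bound; then $\vc z_k=\vc E_k+\vc r_k$ closes the estimate. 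The hard part is the capacitary cutoff $\phi_k$: screening a \emph{constant} velocity at a hole of size $a_\e$ costs only $\sqrt{a_\e}$ — the square root of the $3$D Newtonian capacity of the hole — in the $L^2$–gradient norm, and summed over the $\sim\e^{-3}$ cells this is exactly what turns the crude $\e^{-1}$ into the sharp $\s_\e^{-1}=\e^{(\a-3)/2}$; this is the only place where the two–scale structure $a_\e\ll\e$ is genuinely exploited. A secondary technical point is the $\e$–uniformity of the Stokes and Bogovskii constants on the perforated cells $D_k$, which is classical (rescaling plus a covering argument).
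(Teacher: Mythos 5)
Your overall route---the cell-by-cell Stokes corrector of Tartar/Allaire, the splitting of the boundary datum into its cell mean plus an oscillation, and the capacitary cost $\sqrt{a_\e}$ per cell that sums to the sharp factor $\s_\e^{-1}$---is precisely the mechanism behind the result the paper invokes (the paper gives no proof, only the citation of Allaire's Proposition~3.4.10), and your quantitative bookkeeping, the preservation of $\dive\vu=0$, and the passage to the time-dependent estimate \eqref{restOp1} are all sound. There is, however, one genuine error: with your choice of divergence datum, the identity property $R_\e(\tilde{\vc w})=\vc w$ for $\vc w\in W_0^{1,2}(\O_\e)$ fails. If $\vu=\tilde{\vc w}$, the Dirichlet data of $\vc z_k$ do vanish, but your assertion ``$\dive\vu=0$'' is false: $\dive\tilde{\vc w}=\dive\vc w$ in $D_k$, which is nonzero for general (non-solenoidal) $\vc w$. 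Your constant becomes $c_k=-|D_k|^{-1}\int_{D_k}\dive\vc w$, so $\vc z_k$ is the minimizer of the Dirichlet energy in $H^1_0(D_k)$ subject to $\dive\vc z_k=\dive\vc w-|D_k|^{-1}\int_{D_k}\dive\vc w$, which is nonzero whenever $\dive\vc w$ is non-constant on $D_k$; hence $R_\e(\tilde{\vc w})\neq\vc w$. This is not a cosmetic point: the paper uses the identity property with arbitrary (non-solenoidal) test functions in $C_c^\infty(\O_\e)$ to conclude $\d_t\tilde P_\e|_{\O_\e}=p_\e$, and your own surjectivity argument onto $L^q(0,T;W_0^{1,2}(\O_\e))$ relies on it for general fields.

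The fix is small and is what the classical construction actually does: prescribe for the corrector the \emph{constant} divergence
\begin{equation*}
\dive\vc z_k\equiv -\frac{1}{|D_k|}\int_{T_{\e,k}}\dive\vu\,\dx,
\end{equation*}
which is exactly your compatibility constant, vanishes when $\vu\equiv 0$ on $T_{\e,k}$ (restoring Property~1), and still vanishes when $\dive\vu=0$ (keeping Property~2). Equivalently, pose the Stokes problem for $R_\e\vu$ itself on $D_k$ with forcing $-\Delta_x\vu$, divergence $\dive\vu+|D_k|^{-1}\int_{T_{\e,k}}\dive\vu$, and data $R_\e\vu=\vu$ on $\partial B_k$, $R_\e\vu=0$ on $\partial T_{\e,k}$; then for $\vu=\tilde{\vc w}$ the function $\vc w$ itself solves this problem and uniqueness gives $R_\e\tilde{\vc w}=\vc w$. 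With this change one still has $\|\dive\vc z_k\|_{L^2(D_k)}\lesssim(a_\e/\e)^{3/2}\|\Grad\vu\|_{L^2(B_k)}$, so your cell estimates, the competitor argument with $\vc E_k$, and the summation over $k$ go through verbatim.
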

	\begin{proof}
		See \cite[Proposition~3.4.10]{ALL-NS2}.
	\end{proof}
	
	Note in particular that inequality \eqref{restOp1} implies by $\s_\e < 1$ and Poincar\'e inequality in $\O$
\begin{align}\label{restOp}
\|R_\e(\bvp)\|_{L^q(0,T; L^2(\O_\e))} + \s_\e \|\nabla_x R_{\e}(\bvp)\|_{L^{q}(0,T;L^{2}({\O_{\e}}))} \leq C \|\nabla_x \bvp\|_{L^{q}(0,T;L^{2}({\O}))}
\end{align}	
	 which will be the crucial estimate in this section. Furthermore, by \cite[Lemma~2.1]{LY}, we have the following
	\begin{lemma}\label{definition of P}
		Let $1<q<\infty$ and assume $H\in L^{q}(0,T;W^{-1,2}({\O;\R^{3}}))$ satisfies 
		\be
		\langle  H, \bvp \rangle_{(0,T) \times \O}=0,\quad \forall \bvp \in C_{c}^{\infty}((0,T) \times \O;\R^{3}), \quad \dive \bvp =0.\nn
		\ee
		Then there exists a scalar function $P\in {L^{q}(0,T;L_{0}^{2}({\O}))}$ satisfying
		\be
		H=\nabla_x P, \quad \mbox{with}\quad \|P\|_{L^{q}(0,T;L_{0}^{2}({\O}))}\leq C\|H\|_{L^{q}(0,T;W^{-1,2}({\O}))}.\nn
		\ee
	\end{lemma}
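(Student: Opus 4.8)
The plan is to reduce this space--time statement to a pointwise-in-time de Rham argument combined with the Bogovskii operator, keeping all constants uniform in $t$. First I would disintegrate in time: for $\psi\in C_c^\infty(0,T)$ and a solenoidal $\bvp\in C_c^\infty(\O;\R^3)$, the function $(t,x)\mapsto\psi(t)\bvp(x)$ is admissible in the hypothesis, so $\int_0^T\psi(t)\,\langle H(t),\bvp\rangle_{W^{-1,2},W^{1,2}_0}\dt=0$; since $\psi$ is arbitrary, $\langle H(t),\bvp\rangle=0$ for a.e.\ $t$, with a null set depending on $\bvp$. Using the separability of the solenoidal subspace of $C_c^\infty(\O;\R^3)$ in the $W^{1,2}_0$-topology, I would pick a countable dense family and intersect the corresponding null sets to obtain a single null set $N\subset(0,T)$ off which $H(t)$ annihilates every solenoidal $\bvp\in C_c^\infty(\O;\R^3)$, hence --- by density of smooth compactly supported solenoidal fields in $\{\vw\in W^{1,2}_0(\O):\dive\vw=0\}$, classical for the $C^{2,\beta}$ domain and trivial for the torus --- every solenoidal field in $W^{1,2}_0(\O)$.

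Next, fix $t\notin N$ and let $\mathcal{B}:L^2_0(\O)\to W^{1,2}_0(\O;\R^3)$ be the Bogovskii operator, a bounded right inverse of the divergence with $\|\mathcal{B}(g)\|_{W^{1,2}_0(\O)}\le C(\O)\|g\|_{L^2(\O)}$ (on the torus any $H^1$-bounded solution of $\dive\vv=g$ will do). The functional $g\mapsto-\langle H(t),\mathcal{B}(g)\rangle$ is linear and bounded on $L^2_0(\O)$ with norm at most $C(\O)\|H(t)\|_{W^{-1,2}(\O)}$, so by the Riesz representation theorem there is a unique $P(t)\in L^2_0(\O)$ with
\be
\int_\O P(t)\,g\dx=-\langle H(t),\mathcal{B}(g)\rangle_{W^{-1,2},W^{1,2}_0}\quad\text{for all }g\in L^2_0(\O), \nn
\ee
and $\|P(t)\|_{L^2(\O)}\le C(\O)\|H(t)\|_{W^{-1,2}(\O)}$. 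To check $\nabla_x P(t)=H(t)$ in $\mathcal{D}'(\O)$, take $\bvp\in C_c^\infty(\O;\R^3)$ and set $g:=\dive\bvp\in C_c^\infty(\O)\cap L^2_0(\O)$; then $\bvp-\mathcal{B}(g)\in W^{1,2}_0(\O)$ is solenoidal and thus annihilated by $H(t)$, whence $\langle H(t),\bvp\rangle=\langle H(t),\mathcal{B}(g)\rangle=-\int_\O P(t)\,\dive\bvp\dx=\langle\nabla_x P(t),\bvp\rangle$.

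It remains to handle measurability and time integrability. For fixed $g\in L^2_0(\O)$, $t\mapsto\int_\O P(t)\,g\dx=-\langle H(t),\mathcal{B}(g)\rangle$ is measurable because $H\in L^q(0,T;W^{-1,2}(\O))$ and $\mathcal{B}(g)$ is a fixed element of $W^{1,2}_0(\O)$; hence $t\mapsto P(t)$ is weakly, and by Pettis' theorem and the separability of $L^2_0(\O)$ strongly, measurable. The pointwise bound above then gives $\|P\|^q_{L^q(0,T;L^2_0(\O))}=\int_0^T\|P(t)\|^q_{L^2(\O)}\dt\le C(\O)^q\|H\|^q_{L^q(0,T;W^{-1,2}(\O))}$, which is the claim. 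I expect the only genuinely non-routine point to be the very first step --- transferring the space--time orthogonality to orthogonality of $H(t)$, for a.e.\ fixed $t$, against \emph{all} $W^{1,2}_0$ solenoidal fields at once --- which rests on the disintegration-plus-separability device and on the density of $C_c^\infty$ solenoidal fields in the solenoidal subspace of $W^{1,2}_0(\O)$; everything afterwards is the standard de Rham/Bogovskii construction, uniform in $t$ since $C(\O)$ does not depend on $t$.
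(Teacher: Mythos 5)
Your proof is correct. Note, however, that the paper does not prove this lemma at all: it is quoted verbatim from the literature (Lu--Yang, \cite[Lemma~2.1]{LY}), so there is no internal argument to compare against. What you have written is precisely the standard proof underlying that cited result: disintegration in time via product test functions $\psi(t)\bvp(x)$ plus separability to get a single null set, the pointwise-in-time de Rham argument realized through the Bogovskii operator and Riesz representation on $L^2_0(\O)$, and Pettis' theorem to recover strong measurability and the $L^q$-in-time bound. All the ingredients you invoke are available in the paper's setting: the density of compactly supported smooth solenoidal fields in $\{\vw\in W^{1,2}_0(\O):\dive\vw=0\}$ holds for the torus and for $C^{2,\beta}$ bounded domains, and the Bogovskii operator is bounded $L^2_0(\O)\to W^{1,2}_0(\O;\R^3)$ there. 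So your write-up is a valid, self-contained substitute for the citation; the only practical difference is that the authors outsource the argument while you supply it.
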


	Having the two foregoing lemmas at hand, we want to extend the pressure $p_\e$, defined in $\O_\e$, to the whole of $\O$ and show uniform bounds for this extension. To this end, define for each $t \in [0, T]$ a functional $\tilde H_{\e}\in \mathcal D'(\O)$ by
	\ba
	&\langle \tilde H_{\e}(t) , \bvp \rangle_{\O}=\langle \nabla_x P_{\e}(t) ,R_{\e}(\bvp) \rangle_{\O_{\e}}\\
	&=\langle \mathbf F_{\e}(t)-\s_{\e}^{4}\vr_{\e} \vu_{\e}(t)+\s_{\e}^{4}\vr_{\e}^{0} \vu_{\e}^{0} - \s_\e^4 \dive \Psi_{\e}(t) + \s_\e^2 \mu \Delta_x \vU_{\e},R_{\e}(\bvp) \rangle_{\O_{\e}}, \nn
	\ea
	where $P_{\e}$ is given by \eqref{Pe-def}. By the uniform estimates \eqref{unifBds} obtained for $\uu_\e$, and the fact that $\|\vr_\e\|_{L^p(0,T; L^p(\O_\e))} = \|\vr_\e^0\|_{L^p(\O_\e)} \lesssim \|\vr_\e^0\|_{L^\infty(\O_\e)} \leq C$ for any $p \in [1, \infty]$, we find
	\ba\label{a}
	\s_\e^4 |\langle  \dive \Psi_{\e}(t),R_{\e}(\bvp) \rangle_{\O_{\e}}| &= \s_\e^4 |\langle \Psi_{\e}(t),\nabla_x R_{\e}(\bvp) \rangle_{\O_{\e}}|\\
	&\leq \s_\e^4 \|\Psi_{\e}(t)\|_{L^{\infty}(0,T;L^{2}({\O_{\e}}))}\cdot \|\nabla_x R_{\e}(\bvp)\|_{L^{2}({\O_{\e}})}\\
	&\leq C \s_\e^4 \|\vr_{\e} \vu_{\e} \otimes \vu_{\e}\|_{L^{1}(0,T;L^{2}({\O_{\e}}))}\cdot \s_\e^{-1} \|\nabla_x \bvp\|_{L^{2}({\O})}\\
	&\leq C \s_\e^3 \|\vr_{\e}\|_{L^{\infty}(0,T;L^{\infty}({\O_{\e}}))}\|\vu_{\e}\|_{L^{2}(0,T;L^{4}({\O_{\e}}))}^{2} \|\nabla_x \bvp\|_{L^{2}({\O})}\\
	&\leq C \s_\e^3 \|\vr_{\e}\|_{L^{\infty}(0,T;L^{\infty}({\O_{\e}}))}\|\vu_{\e}\|_{L^{2}(0,T;W^{1,2}({\O_{\e}}))}^{2} \|\nabla_x \bvp\|_{L^{2}({\O})}\\
	&\leq C\s_\e \|\nabla_x \bvp\|_{L^{2}({\O})}.
	\ea
Moreover,
	\ba
	\s_\e^2 |\langle \mu \Delta_x \vU_{\e},R_{\e}(\bvp) \rangle_{\O_{\e}} | &= \s_\e^2 |\langle \mu \nabla_x \vU_{\e},\nabla_x R_{\e}(\bvp) \rangle_{\O_{\e}}|\\
	&\leq C \s_\e^2 \|\nabla_x \vU_{\e}\|_{L^{\infty}(0,T;L^{2}({\O_{\e}}))} \|\nabla_x R_{\e}(\bvp)\|_{L^{2}(\O_{\e})}\\
	&\leq C \s_\e^2 \|\nabla_x \uu_{\e}\|_{L^{2}(0,T;L^{2}({\O_{\e}}))} \|\nabla_x R_{\e}(\bvp)\|_{L^{2}(\O_{\e})}\\
	&\leq C \|\nabla_x \bvp\|_{L^{2}(\O)},
	\ea
and  
	\ba
	|\langle \s_{\e}^4 \vr_{\e} \vu_{\e}(t),R_{\e}(\bvp) \rangle_{\O_{\e}}| &\leq C \s_{\e}^4 \|\sqrt{\vr_{\e}}\|_{L^{\infty}(0,T;L^{\infty}({\O_{\e}}))} \|\sqrt{\vr_\e}\vu_{\e}\|_{L^{\infty}(0,T;L^{2}({\O_{\e}}))}\| R_{\e}(\bvp)\|_{L^{2}({\O_{\e}})}\\
	& \leq C \s_{\e}^4 \|\sqrt{\vr_{\e}}\|_{L^{\infty}(0,T;L^{\infty}({\O_{\e}}))} \|\sqrt{\vr_\e} \vu_{\e}\|_{L^{\infty}(0,T;L^{2}({\O_{\e}}))}\cdot \s_{\e}\|\nabla_x R_{\e}(\bvp)\|_{L^{2}({\O_{\e}})}\\
	& \leq C \s_{\e}^2 \|\nabla_x \bvp\|_{L^{2}({\O})}.
	\ea
	Additionally,
	\ba
	|\langle \s_{\e}^{4} \vr_{\e}^{0} \vu_{\e}^{0},R_{\e}(\bvp) \rangle_{\O_{\e}}| &\leq C \s_{\e}^{4} \|\vr_{\e}^{0} \vu_\e^0\|_{L^{2}({\O_{\e}})} \cdot \| R_{\e}(\bvp)\|_{L^{2}({\O_{\e}})}\\
	& \leq C \s_{\e}^{4} \|\sqrt{\vr_\e^0}\|_{L^\infty(\O_\e)} \|\sqrt{\vr_{\e}^{0}} \vu_\e^0\|_{L^{2}({\O_{\e}})} \cdot \s_{\e}\|\nabla_x R_{\e}(\bvp)\|_{L^{2}({\O_{\e}})}\\
	& \leq C\s_{\e}^{2} \|\nabla_x \bvp\|_{L^{2}({\O})}.
	\ea
	Since 
	\ba
	\|\mathbf F_{\e}\|_{L^{\infty}(0,T;L^{2}({\O_{\e}}))}^{2} &= \sup_{t \in [0,T]} \int_{\O_{\e}} \left(\int_{0}^{t} (\vr_{\e} \vf)(\cdot,s) \, {\rm d}s \right)^{2} \dx \\
	&\leq C \|\vr_{\e}\|_{L^{\infty}(0,T;L^{\infty}({\O_{\e}}))}^{2}\| \vf_{\e}\|^{2}_{L^{2}(0,T;L^{2}({\O_{\e}}))}\leq C,\nn
	\ea
	we have
	\ba\label{b}
	&|\langle \mathbf F_{\e}(t),R_{\e}(\bvp) \rangle_{\O_{\e}}|\leq C\s_{\e} \|\mathbf F_{\e}\|_{L^{\infty}(0,T;L^{2}({\O_{\e}}))}\cdot \|\nabla_x R_{\e}(\bvp)\|_{L^{2}({\O_{\e}})}\\
	&\leq C \|\nabla_x \bvp\|_{L^{2}({\O})}.
	\ea
	
	Summing up \eqref{a}-\eqref{b}, we conclude that for any $t \in [0,T]$,
	\be
	\langle \tilde H_{\e}(t),\bvp \rangle_{\O}\leq C \|\nabla_x \bvp\|_{L^{2}({\O})}, \nn
	\ee
	meaning
	\be\label{H1}
	\tilde H_{\e} \in L^{\infty}(0,T;W^{-1,2}({\O})), \qquad \|\tilde H_\e\|_{L^\infty(0,T; W^{-1,2}(\O))} \leq C.
	\ee

	Combining the properties of the restriction operator in Lemma~\ref{Restriction operator}, we get moreover
	\ba
	\langle \tilde H_{\e},\bvp \rangle_{(0,T) \times \O}=\langle \nabla P_{\e},R_{\e}(\bvp) \rangle_{(0,T) \times \O_{\e}}=-\langle P_{\e},\dive R_{\e}(\bvp) \rangle_{(0,T) \times \O_{\e}}=0\nn
	\ea
	for all $\bvp\in C_{c}^{\infty}([0,T) \times \O;\R^{3})$ with $\dive \bvp=0$. Using Lemma~\ref{definition of P} and \eqref{H1}, we deduce that for any $t \in [0,T]$ there exists 
	\be
	\tilde P_{\e}(t) \in L_{0}^{2}({\O}) \nn
	\ee
	such that $\tilde H_{\e}(t) = \nabla_x \tilde P_{\e}(t)$ and
	\ba\label{pressure}
	\|\tilde P_{\e} \|_{L^\infty(0,T;L_{0}^{2}({\O}))}\leq\|\tilde H_{\e} \|_{L^\infty(0,T;W^{-1,2}({\O}))}\leq C. \nn
	\ea
	
	Finally, the very first property of $R_\e$ shows (see \cite{ALL-NS1}) that $\d_t \tilde P_\e |_{\O_\e} = p_\e$. 

\begin{remark}
Using the finer inequality \eqref{restOp1} instead of \eqref{restOp}, the same proof shows that indeed
\begin{gather*}
\tilde{P}_\e = \tilde{P}_\e^{(1)} + \s_\e \tilde{P}_\e^{(2)} \in L^\infty(0,T; W^{1,2}(\O) + L_0^2(\O)), \\
\|\tilde{P}_\e^{(1)}\|_{L^\infty(0,T; W^{1,2}(\O))} + \|\tilde{P}_\e^{(2)}\|_{L^\infty(0,T; L^2(\O))} \lesssim 1.
\end{gather*}
However, as we will see in the homogenization proof later on, it is enough for our purposes that $\tilde{P}_\e$ is uniformly bounded in $L^\infty(0,T; L^2(\O))$.
\end{remark}
	
	\section{Convergence and proof of Theorem~\ref{limitTorus}}\label{sec:conv}
	As already mentioned in the introduction, one of the main difficulties in proving convergence lies in the limit of the continuity equation, and more precisely, in finding the limit of the term $\vr_\e \uu_\e$ as both functions converge just weakly in their spaces. For this reason, to prove Theorems~\ref{limitTorus} and \ref{limitDomain}, we will make use of the relative energy inequality (REI), a brief derivation of which is given below. We will follow the presentation of \cite{Lu-Oschmann}.
	
	\subsection{Relative energy inequality}
	Our proof of convergence will rely on the widely used relative energy inequality (REI). This approach enables us additionally to give convergence rates for the velocity and density.\\
	
	Given a smooth function $\vU$ with $\vU |_{\d \O_\e} = 0$, the relative energy is defined as
	\begin{align*}
		E_\e(\vr_\e, \uu_\e | r, \vU) = \sigma_\e^4 \int_{\O_\e} \frac12 \vr_\e |\uu_\e - \vU|^2 \dx + \int_{\O_\e} \frac12 (\vr_\e - r)^2 \dx.
	\end{align*}
	Using moreover $\vU$ as test function in the momentum equation and noticing that $P_\e(0, \cdot) = 0$ gives
	\begin{align*}
		\sigma_\e^4 \left[ \int_{\O_\e} \vr_\e \uu_\e \cdot \vU \dx \right]_{t=0}^{t=\tau} &= \sigma_\e^4 \int_0^\tau \int_{\O_\e} \vr_\e \uu_\e \cdot \d_t \vU \dx \dt + \sigma_\e^4 \int_0^\tau \int_{\O_\e} \vr_\e \uu_\e \otimes \uu_\e : \nabla_x \vU \dx \dt \\
		&\quad - \mu \sigma_\e^2 \int_0^\tau \int_{\O_\e} \nabla_x \uu_\e : \nabla_x \vU \dx \dt + \int_0^\tau \int_{\O_\e} \vr_\e \vc f \cdot \vU \dx \dt \\
		&\quad + \int_0^\tau  \int_{\O_\e} P_\e \dive \d_t \vU \dx \dt - \int_{\O_\e} P_\e(\tau, \cdot) \dive \vU(\tau, \cdot) \dx.
	\end{align*}
	
	Further, using $\frac12 |\vU|^2$ in the continuity equation yields
	\begin{align*}
		\left[ \int_{\O_\e} \frac12 \vr_\e |\vU|^2 \dx \right]_{t=0}^{t=\tau} = \int_0^\tau \int_{\O_\e} \vr_\e \vU \cdot \d_t \vU \dx \dt + \int_0^\tau \int_{\O_\e} \vr_\e \uu_\e \otimes \vU : \nabla_x \vU \dx \dt.
	\end{align*}
	
	Recall also the energy inequality \eqref{enIneq} as
	\begin{align*}
		\sigma_\e^4 \left[ \int_{\O_\e} \frac12 \vr_\e |\uu_\e|^2 \dx \right]_{t=0}^{t=\tau} + \mu \s_\e^2 \int_0^\tau \int_{\O_\e} |\nabla_x \uu_\e|^2 \dx \dt \leq \int_0^\tau \int_{\O_\e} \vr_\e \vc f \cdot \uu_\e \dx \dt,
	\end{align*}
	which we can write as
	\begin{align*}
		&\sigma_\e^4 \left[ \int_{\O_\e} \frac12 \vr_\e |\uu_\e - \vU|^2 \dx \right]_{t=0}^{t=\tau} + \sigma_\e^4 \left[ \int_{\O_\e} \vr_\e \uu_\e \cdot \vU \dx \right]_{t=0}^{t=\tau} - \sigma_\e^4 \left[ \int_{\O_\e} \frac12 \vr_\e |\vU|^2 \right]_{t=0}^{t=\tau} + \mu \s_\e^2 \int_0^\tau \int_{\O_\e} |\nabla_x \uu_\e|^2 \dx \dt \\
		&\leq \int_0^\tau \int_{\O_\e} \vr_\e \vc f \cdot \uu_\e \dx \dt.
	\end{align*}
	
	Putting together, we find
	\begin{align*}
		&\sigma_\e^4 \left[ \int_{\O_\e} \frac12 \vr_\e |\uu_\e - \vU|^2 \dx \right]_{t=0}^{t=\tau} + \sigma_\e^4 \int_0^\tau \int_{\O_\e} \vr_\e \uu_\e \cdot \d_t \vU \dx \dt + \s_\e^4 \int_0^\tau \int_{\O_\e} \vr_\e \uu_\e \otimes \uu_\e : \nabla_x \vU \dx \dt \\
		&- \mu \s_\e^2 \int_0^\tau \int_{\O_\e} \nabla_x \uu_\e : \nabla_x \vU \dx \dt + \int_0^\tau \int_{\O_\e} \vr_\e \vc f \cdot \vU \dx \dt - \sigma_\e^4 \int_0^\tau \int_{\O_\e} \vr_\e \vU \cdot \d_t \vU \dx \dt \\
		&- \s_\e^4 \int_0^\tau \int_{\O_\e} \vr_\e \uu_\e \otimes \vU : \nabla_x \vU \dx \dt + \mu \sigma_\e^2 \int_0^\tau \int_{\O_\e} |\nabla_x \uu_\e|^2 \dx \dt \\
		&\leq \int_0^\tau \int_{\O_\e} \vr_\e \vc f \cdot \uu_\e \dx \dt + \int_0^\tau  \int_{\O_\e} P_\e \dive \d_t \vU \dx \dt - \int_{\O_\e} P_\e(\tau, \cdot) \dive \vU(\tau, \cdot) \dx,
	\end{align*}
	or in rearranged form
	\ba\label{rei1}
	&\sigma_\e^4 \left[ \int_{\O_\e} \frac12 \vr_\e |\uu_\e - \vU|^2 \dx \right]_{t=0}^{t=\tau} + \mu \s_\e^2 \int_0^\tau \int_{\O_\e} |\nabla_x (\uu_\e - \vU)|^2 \dx \dt \\
	&\leq - \sigma_\e^4 \int_0^\tau \int_{\O_\e} \vr_\e (\uu_\e - \vU) \cdot (\d_t \vU + (\uu_\e \cdot \nabla_x) \vU) \dx \dt + \int_0^\tau \int_{\O_\e} \vr_\e \vc f \cdot (\uu_\e - \vU) \dx \dt \\
	&\quad - \mu \s_\e^2 \int_0^\tau \int_{\O_\e} \nabla_x \vU : \nabla_x (\uu_\e - \vU) \dx \dt + \int_0^\tau  \int_{\O_\e} P_\e \dive \d_t \vU \dx \dt - \int_{\O_\e} P_\e(\tau, \cdot) \dive \vU(\tau, \cdot) \dx.
	\ea
	
	In order to take care of the density, we see that by conservation of $L^q$-norms \eqref{consLp}, we have for any $r \in C^\infty([0,T] \times \overline{\O})$ and with the help of the continuity equation \eqref{NSE}$_1$
	\begin{align*}
		0 &= \left[ \int_{\O_\e} \frac12 \vr_\e^2 \dx \right]_{t=0}^{t=\tau} = \left[ \int_{\O_\e} \frac12 (\vr_\e - r)^2 \dx \right]_{t=0}^{t=\tau} + \left[ \int_{\O_\e} r \vr_\e \dx \right]_{t=0}^{t=\tau} - \left[ \int_{\O_\e} \frac12 r^2 \dx \right]_{t=0}^{t=\tau} \\
		&= \left[ \int_{\O_\e} \frac12 (\vr_\e - r)^2 \dx \right]_{t=0}^{t=\tau} + \int_0^\tau \int_{\O_\e} \d_t( r \vr_\e ) \dx \dt - \int_0^\tau \int_{\O_\e} \frac12 \d_t(r^2) \dx \dt \\
		&= \left[ \int_{\O_\e} \frac12 (\vr_\e - r)^2 \dx \right]_{t=0}^{t=\tau} + \int_0^\tau \int_{\O_\e} \d_t r \vr_\e + r \d_t \vr_\e \dx \dt - \int_0^\tau \int_{\O_\e} r \d_t r \dx \dt \\
		&= \left[ \int_{\O_\e} \frac12 (\vr_\e - r)^2 \dx \right]_{t=0}^{t=\tau} + \int_0^\tau \int_{\O_\e} (\vr_\e - r) \d_t r - r \dive(\vr_\e \uu_\e) \dx \dt \\
		&= \left[ \int_{\O_\e} \frac12 (\vr_\e - r)^2 \dx \right]_{t=0}^{t=\tau} + \int_0^\tau \int_{\O_\e} (\vr_\e - r) \d_t r + \vr_\e \uu_\e \cdot \nabla_x r \dx \dt.
	\end{align*}
	
	Adding to \eqref{rei1}, we finally obtain
	\ba\label{rei}
	&\left[ E_\e(\vr_\e, \uu_\e | r, \vU) \right]_{t=0}^{t=\tau} + \mu \s_\e^2 \int_0^\tau \int_{\O_\e} |\nabla_x \uu_\e - \nabla_x \vU|^2 \dx \dt \\
	&\leq - \sigma_\e^4 \int_0^\tau \int_{\O_\e} \vr_\e (\uu_\e - \vU) \cdot (\d_t \vU + (\uu_\e \cdot \nabla_x) \vU) \dx \dt + \int_0^\tau \int_{\O_\e} \vr_\e \vc f \cdot (\uu_\e - \vU) \dx \dt \\
	&\quad - \mu \s_\e^2 \int_0^\tau \int_{\O_\e} \nabla_x \vU : \nabla_x (\uu_\e - \vU) \dx \dt + \int_0^\tau  \int_{\O_\e} P_\e \dive \d_t \vU \dx \dt - \int_{\O_\e} P_\e(\tau, \cdot) \dive \vU(\tau, \cdot) \dx \\
	&\quad - \int_0^\tau \int_{\O_\e} (\vr_\e - r) (\d_t r + \uu_\e \cdot \nabla_x r) \dx \dt,
	\ea
where we eventually used that $\int_{\O_\e} r \uu_\e \cdot \Grad r \dx = 0$ by solenoidality of $\uu_\e$. Note that this is particularly the REI for the compressible case, cf.~\cite{HNO24}. Such idea has been used recently to show a weak-strong uniqueness result for the inhomogeneous incompressible system, see \cite{CBSV24}­. Our proof of convergence is similar to \cite{HNO24}: we will choose $\vU = W_\e \uu$, where $\uu$ is a strong solution to Darcy's law, and $W_\e = (\vv_\e^1, \vv_\e^2, \vv_\e^3)$ is the capacity function for the holes constructed in Section~\ref{Local problem}. This procedure enables us to not only get convergence, but also rates.

\begin{remark}
By density, we can extend the class of test functions to $\vU, \d_t \vU \in L^q(0,T; W^{1,q}(\O_\e))$, $\vU = 0$ on $\d \O_\e$, and $r \in L^q(0,T; L^q(\O))$, for $q>1$ high enough such that all the integrals in \eqref{rei} are well-defined.
\end{remark}
	
	\subsection{Convergence for toroidal domain}
	Let $(\vr, \uu)$ be a strong solution to Darcy's law \eqref{Darcy} with the regularity required in Theorem~\ref{limitTorus}, and choose $\ww_\e = \vU = W_\e \uu$ and $r = \vr$ as test functions in the REI \eqref{rei}. Note in particular that this is possible since $\ww_\e |_{\d \O_\e} = 0$. This gives
	\begin{align*}
		&\left[ E_\e(\vr_\e, \uu_\e | \vr, \ww_\e) \right]_{t=0}^{t=\tau} + \mu \s_\e^2 \int_0^\tau \int_{\O_\e} |\nabla_x (\uu_\e - \ww_\e)|^2 \dx \dt \\
		&\leq - \sigma_\e^4 \int_0^\tau \int_{\O_\e} \vr_\e (\uu_\e - \ww_\e) \cdot (\d_t \ww_\e + (\uu_\e \cdot \nabla_x) \ww_\e) \dx \dt + \int_0^\tau \int_{\O_\e} \vr_\e \vc f \cdot (\uu_\e - \ww_\e) \dx \dt \\
		&\quad - \mu \s_\e^2 \int_0^\tau \int_{\O_\e} \nabla_x \ww_\e : \nabla_x (\uu_\e - \ww_\e) \dx \dt \\
		&\quad + \int_0^\tau  \int_{\O_\e} P_\e \dive \d_t \ww_\e \dx \dt - \int_{\O_\e} P_\e(\tau, \cdot) \dive \ww_\e(\tau, \cdot) \dx \\
		&\quad - \int_0^\tau \int_{\O_\e} (\vr_\e - \vr) ( \d_t \vr + \uu_\e \cdot \nabla_x \vr ) \dx \dt.
	\end{align*}
	
	As for the last two pressure terms, we deduce with \eqref{pressure} and $$\dive \ww_\e = W_\e : \nabla_x \uu = (W_\e - \Id) : \nabla_x \uu$$ by $\dive \uu = 0$ that
	\begin{align*}
		&\int_0^\tau  \int_{\O_\e} P_\e \dive \d_t \ww_\e \dx \dt - \int_{\O_\e} P_\e(\tau, \cdot) \dive \ww_\e(\tau, \cdot) \dx \leq C \|P_\e\|_{L^\infty(0,T; L^2(\O_\e))} \|\dive \ww_\e\|_{W^{1,2}(0,T; L^2(\O_\e))} \\
		&\leq C \|W_\e - \Id\|_{L^3(\O)} \|\Grad \uu\|_{W^{1,2}(0,T; L^6(\O))} \leq C \e^{\a- 1},
	\end{align*}
	where we used that $\uu \in L^2(0,T; W^{3,2}(\O))$ and $\d_t \uu \in L^2(0,T; W^{2,2}(\O))$, together with Sobolev embedding $W^{1,2}(\O) \hookrightarrow L^6(\O)$, and estimate \eqref{estVei}.\\
	
Next, since $(\vr, \uu)$ satisfies the continuity equation pointwise in $\O$ and therefore as well in $\O_\e$, we may write
\begin{align*}
&- \int_0^\tau \int_{\O_\e} (\vr_\e - \vr) ( \d_t \vr + \uu_\e \cdot \nabla_x \vr ) \dx \dt = - \int_0^\tau \int_{\O_\e} (\vr_\e - \vr) (\uu_\e - \uu) \cdot \nabla_x \vr \dx \dt \\
&= - \int_0^\tau \int_\O (\tilde{\vr}_\e - \vr) (\tilde{\uu}_\e - \uu) \cdot \nabla_x \vr \dx \dt + \int_0^\tau \int_{\O \setminus \O_\e} \vr \uu \cdot \nabla_x \vr \dx \dt.
\end{align*}
For the latter term, we use that $\ww_\e = 0$ in $\O \setminus \O_\e$ to write
\begin{align*}
\int_0^\tau \int_{\O \setminus \O_\e} \vr \uu \cdot \nabla_x \vr \dx \dt &= \int_0^\tau \int_{\O \setminus \O_\e} (\uu - \ww_\e) \cdot \nabla_x \frac12 \vr^2 \dx \dt \\
&\lesssim \| W_\e - \Id\|_{L^3(\O)} \|\uu \otimes \Grad \vr^2\|_{L^1(0,T; L^\frac32(\O))} \lesssim \e^{\a - 1}.
\end{align*}
 Moreover, we estimate
	\begin{align*}
		&\int_0^\tau \int_{\O} (\tilde \vr_\e - \vr) (\tilde \uu_\e - \uu) \cdot \nabla_x \vr \dx \dt \\
		& = \int_0^\tau \int_{\O} (\tilde \vr_\e - \vr) (\tilde \uu_\e - \ww_\e) \cdot \nabla_x \vr \dx \dt + \int_0^\tau \int_{\O} (\tilde \vr_\e - \vr) (\ww_\e - \uu) \cdot \nabla_x \vr \dx \dt \\
		&\leq C \int_0^\tau \int_{\O} |\tilde \vr_\e - \vr|^2 \dx \dt + \frac{\mu}{2} \s_\e^2 \int_0^\tau \int_{\O_\e} |\nabla_x (\uu_\e - \ww_\e)|^2 \dx \dt + C \|\ww_\e - \uu\|_{L^2((0,T) \times \O)}^2 \\
		&\leq C \int_0^\tau \int_{\O} |\tilde \vr_\e - \vr|^2 \dx \dt + \frac{\mu}{2} \s_\e^2 \int_0^\tau \int_{\O_\e} |\nabla_x (\uu_\e - \ww_\e)|^2 \dx \dt + C \e^{2(\a - 1)} \\
		&= C \int_0^\tau \int_{\O_\e} |\vr_\e - \vr|^2 \dx \dt + C \int_0^\tau \int_{\O \setminus \O_\e} \vr^2 \dx \dt + \frac{\mu}{2} \s_\e^2 \int_0^\tau \int_{\O_\e} |\nabla_x (\uu_\e - \ww_\e)|^2 \dx \dt + C \e^{2(\a - 1)} \\
		&= C \int_0^\tau \int_{\O_\e} |\vr_\e - \vr|^2 \dx \dt + \frac{\mu}{2} \s_\e^2 \int_0^\tau \int_{\O_\e} |\nabla_x (\uu_\e - \ww_\e)|^2 \dx \dt + C \big(\e^{3(\a - 1)} + \e^{2(\a - 1)} \big),
	\end{align*}
	where we used that $\ww_\e |_{\d \O_\e} = 0$, and estimate \eqref{estVei}. Further, we may absorb the second term by the left-hand side of the REI to obtain
	\begin{align*}
		&\left[ E_\e(\vr_\e, \uu_\e | \vr, \ww_\e) \right]_{t=0}^{t=\tau} + \frac{\mu}{2} \s_\e^2 \int_0^\tau \int_{\O_\e} |\nabla_x (\uu_\e - \ww_\e)|^2 \dx \dt \\
		&\leq - \sigma_\e^4 \int_0^\tau \int_{\O_\e} \vr_\e (\uu_\e - \ww_\e) \cdot (\d_t \ww_\e + (\uu_\e \cdot \nabla_x) \ww_\e) \dx \dt + \int_0^\tau \int_{\O_\e} \vr_\e \vc f \cdot (\uu_\e - \ww_\e) \dx \dt \\
		&\quad - \mu \s_\e^2 \int_0^\tau \int_{\O_\e} \nabla_x \ww_\e : \nabla_x (\uu_\e - \ww_\e) \dx \dt + C \int_0^\tau E_\e(\vr_\e, \uu_\e | \vr, \ww_\e) \dt + C \e^{\a - 1}.
	\end{align*}
	
	Using now Darcy's law \eqref{Darcy}, we replace $\vr_\e \vf$ to get
	\begin{align*}
		&\left[ E_\e(\uu_\e | \ww_\e) \right]_{t=0}^{t=\tau} + \frac{\mu}{2} \s_\e^2 \int_0^\tau \int_{\O_\e} |\nabla_x \uu_\e - \nabla_x \ww_\e|^2 \dx \dt \\
		&\leq - \s_\e^4 \int_0^\tau \int_{\O_\e} \vr_\e (\uu_\e - \ww_\e) \cdot \big( \d_t \ww_\e + (\uu_\e \cdot \nabla_x) \ww_\e \big) \dx \dt + \int_0^\tau \int_{\O_\e} (\mu M_0 \uu + \nabla_x p) \cdot (\uu_\e - \ww_\e) \dx \dt \\
		&\quad - \mu \s_\e^2 \int_0^\tau \int_{\O_\e} \nabla_x \ww_\e : \nabla_x (\uu_\e - \ww_\e) \dx \dt + C \int_0^\tau E_\e(\vr_\e, \uu_\e | \vr, \ww_\e) \dt \\
		&\quad + \int_0^\tau \int_{\O_\e} (\vr_\e - \vr) \vf \cdot (\uu_\e - \ww_\e) \dx \dt + C \e^{\a - 1}.
	\end{align*}
	
	The remaining force term we handle similar as before the density terms, giving rise to the same bounds.\\
	
	Next, noticing that $\dive \uu_\e = 0$ and $\uu_\e |_{\d \O_\e} = 0$, we get for the term involving $\nabla_x p$ together with the estimate \eqref{estVei}
	\begin{align*}
		&\left[ E_\e(\uu_\e | \ww_\e) \right]_{t=0}^{t=\tau} + \frac{\mu}{2} \s_\e^2 \int_0^\tau \int_{\O_\e} |\nabla_x (\uu_\e - \ww_\e)|^2 \dx \dt \\
		&\leq - \s_\e^4 \int_0^\tau \int_{\O_\e} \vr_\e (\uu_\e - \ww_\e) \cdot \big( \d_t \ww_\e + (\uu_\e \cdot \nabla_x) \ww_\e \big) \dx \dt + \int_0^\tau \int_{\O_\e} \mu M_0 \uu \cdot (\uu_\e - \ww_\e) \dx \dt \\
		&\quad - \mu \s_\e^2 \int_0^\tau \int_{\O_\e} \nabla_x \ww_\e : \nabla_x (\uu_\e - \ww_\e) \dx \dt + C \int_0^\tau E_\e(\vr_\e, \uu_\e | \vr, \ww_\e) \dt + C \e^{\a - 1}.
	\end{align*}
	
	For the first term on the RHS, we estimate with Young's and Poincar\'e's inequality \eqref{poinc} for $\delta>0$ small enough
	\begin{align*}
		&\s_\e^4 \int_0^\tau \int_{\O_\e} \vr_\e (\uu_\e - \ww_\e) \cdot \big( \d_t \ww_\e + (\uu_\e \cdot \nabla_x) \ww_\e \big) \dx \dt \\
		&= \s_\e^4 \int_0^\tau \int_{\O_\e} \vr_\e (\uu_\e - \ww_\e) \cdot \big( \d_t \ww_\e + (\ww_\e \cdot \nabla_x) \ww_\e \big) \dx \dt \\
		&\quad + \s_\e^4 \int_0^\tau \int_{\O_\e} \vr_\e (\uu_\e - \ww_\e) \otimes (\uu_\e - \ww_\e) : \nabla_x \ww_\e \dx \dt \\
		&\leq C \sigma_\e^4 \|\uu_\e - \ww_\e\|_{L^2((0,T) \times \O_\e)} + C \s_\e^4 \|\nabla_x \ww_\e\|_{L^2((0,T) \times \O_\e)} \|\uu_\e - \ww_\e\|_{L^2((0,T) \times \O_\e)} \\
		&\quad + C \s_\e^4 \|\uu_\e - \ww_\e\|_{L^6((0,T) \times \O_\e)}^2 \|\nabla_x \ww_\e\|_{L^\frac32((0,T) \times \O_\e)} \\
		&\leq C \sigma_\e^5 \|\nabla_x(\uu_\e - \ww_\e)\|_{L^2((0,T) \times \O_\e)} + C \s_\e^4 \|\nabla_x(\uu_\e - \ww_\e)\|_{L^2((0,T) \times \O_\e)} \\
		&\quad + C \sigma_\e^4 \e^{\a - 2} \|\nabla_x(\uu_\e - \ww_\e)\|_{L^2((0,T) \times \O_\e)}^2 \\
		&\leq C_\delta \s_\e^8 + \delta \s_\e^2 \|\nabla_x(\uu_\e - \ww_\e)\|_{L^2((0,T) \times \O_\e)}^2 + C_\delta \s_\e^6 + \delta \s_\e^2 \|\nabla_x(\uu_\e - \ww_\e)\|_{L^2((0,T) \times \O_\e)}^2 \\
		&\quad + C \sigma_\e^4 \e^{\a - 2} \|\nabla_x(\uu_\e - \ww_\e)\|_{L^2((0,T) \times \O_\e)}^2 \\
		&\leq C_\delta \sigma_\e^6 + (2\delta + C \s_\e^2 \e^{\a - 2}) \s_\e^2 \|\nabla_x(\uu_\e - \ww_\e)\|_{L^2((0,T) \times \O_\e)}^2,
	\end{align*}
	where we used the estimate \eqref{estGradv} for $\nabla_x \ww_\e$. Now, we have $C \s_\e^2 \e^{\a - 2} = C \e < \delta$ for $\e>0$ small enough; hence, for $\delta>0$ small enough, the last term can be absorbed by the left-hand side of the REI.\\
	%
	
	For the last remaining term, we write
	\begin{align*}
		-\mu \s_\e^2 \int_0^\tau \int_{\O_\e} \nabla_x \ww_\e : \nabla_x (\uu_\e - \ww_\e) \dx \dt &= \mu \s_\e^2 \int_0^\tau \int_{\O_\e} \big( \Delta_x (W_\e \uu) - \nabla_x (\vc q_\e \cdot \uu) \big) \cdot (\uu_\e - \ww_\e) \dx \dt \\
		&\quad+ \mu \s_\e^2 \int_0^\tau \int_{\O_\e} \nabla_x (\vc q_\e \cdot \uu) \cdot (\uu_\e - \ww_\e) \dx \dt \\
		&=: I_1 + I_2.
	\end{align*}
	
	For $I_2$, we find due to $\dive \uu_\e = \dive \uu = 0$, $\dive \ww_\e = (W_\e - \Id) : \nabla_x \uu$, and \eqref{estVei}
	\begin{align*}
		&- \mu \s_\e^2 \int_0^\tau \int_{\O_\e} \nabla_x (\vc q_\e \cdot \uu) \cdot (\uu_\e - \ww_\e) \dx \dt = \mu \s_\e^2 \int_0^\tau \int_{\O_\e} (\vc q_\e \cdot \uu) (W_\e - \Id) : \nabla_x \uu \dx \dt \\
		&\leq C \s_\e^2 \|\vc q_\e\|_{L^\frac32 ((0,T) \times \O_\e)} \|W_\e - \Id\|_{L^3((0,T) \times \O_\e)} \leq C \s_\e^2 \e^{\a - 2} \e^{\a - 1} = C \e^{\a}.
	\end{align*}
	
	For $I_1$, we rewrite exactly the same way as in \cite[Section~7.2]{Lu-Oschmann}
	\begin{align*}
		- \Delta_x(W_\e \uu) + \nabla_x(\vc q_\e \cdot \uu) = (-\Delta_x W_\e + \nabla_x \vc q_\e) \uu + \vc z_\e, \qquad \|\vc z_\e\|_{L^2((0,T) \times \O_\e)} \leq C \s_\e^{-1}.
	\end{align*}
	The very same arguments as in \cite{Lu-Oschmann} then give
	\begin{align*}
		&\mu \int_0^\tau \int_{\O_\e} \big( \s_\e^2 [ \Delta (W_\e \uu) - \nabla_x (\vc q_\e \cdot \uu) ] + M_0 \uu \big) \cdot (\uu_\e - \ww_\e) \dx \dt \\
		&\leq C_\delta \e^{\a - 1} + \delta \sigma_\e^2 \|\nabla_x (\uu_\e - \ww_\e)\|_{L^2((0,T) \times \O_\e)}^2,
	\end{align*}
	where again the last term can be absorbed by the left-hand side of the REI for $\delta>0$ small enough. In total, we arrive at
	\begin{align*}
		\left[ E_\e(\vr_\e, \uu_\e | \vr, \ww_\e) \right]_{t=0}^{t=\tau} + \s_\e^2 \int_0^\tau \int_{\O_\e} |\nabla_x ( \uu_\e - \ww_\e) |^2 \dx \dt \lesssim \int_0^\tau E_\e(\vr_\e, \uu_\e | \vr, \ww_\e) \dt + \e^{\a - 1} + \e^{3 - \a}.
	\end{align*}
	A Gr\"onwall type argument thus shows
	\begin{align*}
		\sup_{t \in [0, T]} E_\e(\vr_\e, \uu_\e | \vr, \vU) (t) \lesssim E_\e(\vr_\e, \uu_\e | \vr, \vU) (0) + \e^{\a - 1} + \e^{3 - \a}.
	\end{align*}
	Finally, we see
	\begin{align*}
		\|\tilde{\uu}_\e - \uu\|_{L^2((0,T) \times \O)}^2 &\lesssim \|\tilde{\uu}_\e - \ww_\e\|_{L^2((0,T) \times \O)}^2 + \|\ww_\e - \uu\|_{L^2((0,T) \times \O)}^2 \\
		&\lesssim \sigma_\e^2 \|\nabla_x (\tilde{\uu}_\e - \ww_\e) \|_{L^2((0,T) \times \O)}^2 + \|\ww_\e - \uu\|_{L^2((0,T) \times \O)}^2 \\
		&\lesssim E_\e(\vr_\e, \uu_\e | \vr, \vU) (0) + \e^{\a - 1} + \e^{3 - \a},
	\end{align*}
	and (see \cite{HNO24, Lu-Oschmann})
	\begin{align*}
		E_\e(\vr_\e, \uu_\e | \vr, \vU) (0) \lesssim \sigma_\e^4 \|\sqrt{\tilde \vr_\e^0} ( \tilde{\uu}_{\e}^0 - \uu(0, \cdot) )\|_{L^2(\O)}^2 + \|\tilde \vr_\e^0 - \vr(0, \cdot)\|_{L^2(\O)}^2 + \sigma_\e^4,
	\end{align*}
	giving the desired estimate
	\ba \label{finalREI}
	&\|\tilde{\uu}_\e - \uu\|_{L^2((0,T) \times \O)}^2 + \|\tilde \vr_\e - \vr\|_{L^2((0,T) \times \O)}^2 \\
	&\leq C \big( \sigma_\e^4 \|\sqrt{\tilde \vr_\e^0} ( \tilde{\uu}_{\e}^0 - \uu(0, \cdot) )\|_{L^2(\O)}^2 + \|\tilde \vr_\e^0 - \vr(0, \cdot)\|_{L^2(\O)}^2 + \e^{\a - 1} + \e^{3 - \a} \big).
	\ea
	This finishes the proof of Theorem~\ref{limitTorus}.
	
	\section{Adaptations for bounded domain, proof of Theorem~\ref{limitDomain}}\label{sec:bdDom}
	In a bounded domain $\O \subset \R^3$, we need to take care of the boundary mismatch between $\uu_\e|_{\d \O} = 0$ and $\uu \cdot \vc n|_{\d \O} = 0$, hence \emph{a priori} we don't know that $\ww_\e = 0$ on $\d \O$. Thus, we use the boundary corrector from \cite{Lu-Oschmann} (for previous results in this direction, see \cite{HNO24, Shen22}), the construction of which we briefly recall.
	
	Having $W_\e$ defined as in Proposition~\ref{TestFct}, we search for a solution to
	\begin{align*}
		\begin{cases}
			\mathbf{curl} \, \Phi_1^\e = W_\e(\e^\a \cdot) - \mathbb{I} & \text{in } \R^3,\\
			\Phi_1^\e \to 0 & \text{as } x \to \infty,
		\end{cases}
	\end{align*}
	where the curl of a matrix-valued function $A$ is taken row-wise, i.e., $({\rm curl} \, A)_{ij} = \epsilon_{ikl} \partial_k A_{jl}$, with $\epsilon_{ikl}$ being the Levi-Civita symbol\footnote{Letting $\delta_{ij}$ denote the usual Kronecker delta, we may write $\epsilon_{ikl} = \delta_{i1}(\delta_{k2} \delta_{l3} - \delta_{k3}\delta_{l2}) - \delta_{i2}(\delta_{k1} \delta_{l3} - \delta_{k3}\delta_{l1}) + \delta_{i3}(\delta_{k1} \delta_{l2} - \delta_{k2}\delta_{l1})$.}.\\
	
	As shown in \cite{Lu-Oschmann}, we may then define the actual corrector $\Phi^\e$ as $$ \Phi^\e(x) = \Phi_1^\e \left( \frac{x}{\e^\a} \right),$$ satisfying $$ \Phi^\e \in W^{1, \infty}(\R^3), \qquad \|\Phi^\e\|_{L^\infty(\R^3)} + \e^\a \|\nabla_x \Phi^\e\|_{L^\infty(\R^3)} \lesssim 1.$$
	
	Let now $\eta : [0, \infty) \to [0,1]$ be a smooth cut-off function with $\eta(0)=\eta'(0)=0$ and $\eta(x) = 1$ for $|x|>1$. Define
	\begin{align*}
		\eta_\e(x) = \eta \left( \frac{{\rm dist}(x, \partial \O)}{\e^\a} \right),
	\end{align*}
	and set finally
	\providecommand{\twe}{\tilde{\ww}_\e}
	\begin{align*}
		\twe = \e^\a \nabla_x \eta_\e \times (\Phi^\e \uu) + \eta_\e \vc w_\e = \e^\a \nabla_x \eta_\e \times (\Phi^\e \uu) + (\eta_\e - 1) (W_\e - \mathbb{I}) \uu + (\eta_\e - 1) \uu + \vc w_\e.
	\end{align*}
	
	Again as seen in \cite{Lu-Oschmann}, we have $\twe = 0$ on $\partial \O$, $\twe = \vc w_\e$ if ${\rm dist}(x, \partial \O) \geq \e^\a$, in particular $\twe = 0$ on $\O \setminus \O_\e$, and
	\begin{align*}
		\|\twe - \ww_\e\|_{L^\infty(0,T;L^q(\O))} + \e^\alpha \|\nabla_x (\twe - \ww_\e) \|_{L^\infty(0,T;L^q(\O))} &\lesssim \e^\frac{\a}{q}, \\
		\|\dive(\ww_\e - \twe)\|_{L^\infty(0,T; L^q(\O))} &\lesssim \e^{\min\{1, \frac3q\} (\a - 1)} + \e^\frac{\a}{q}.
	\end{align*}
	
	Inserting as before Darcy's law in the REI \eqref{rei}, we find
	\begin{align*}
		&\left[ E_\e(\uu_\e | \twe) \right]_{t=0}^{t=\tau} + \mu \s_\e^2 \int_0^\tau \int_{\O_\e} |\nabla_x (\uu_\e - \twe)|^2 \dx \dt \\
		&\leq - \s_\e^4 \int_0^\tau \int_{\O_\e} \vr_\e (\uu_\e - \twe) \cdot \big( \d_t \twe + (\uu_\e \cdot \nabla_x) \twe \big) \dx \dt + \int_0^\tau \int_{\O_\e} (\mu M_0 \uu + \nabla_x p) \cdot (\uu_\e - \twe) \dx \dt \\
		&\quad - \mu \s_\e^2 \int_0^\tau \int_{\O_\e} \nabla_x \twe : \nabla_x (\uu_\e - \twe) \dx \dt.
	\end{align*}
	
	The pressure term gives
	\begin{align*}
		\int_0^\tau \int_{\O_\e} \nabla_x p \cdot (\uu_\e - \twe) \dx \dt \lesssim \|\dive(\twe - \ww_\e)\|_{L^1((0,T) \times \O)} + \|\dive \ww_\e\|_{L^1((0,T) \times \O)} \lesssim \e^{\a - 1} + \e^\a.
	\end{align*}
	
	As for the convective one,
	\begin{align*}
		&\s_\e^4 \int_0^\tau \int_{\O_\e} \vr_\e (\uu_\e - \twe) \cdot \big( \d_t \twe + (\uu_\e \cdot \nabla_x) \twe \big) \dx \dt \\
		&\leq C \sigma_\e^4 \|\uu_\e - \twe\|_{L^2((0,T) \times \O_\e)} + C \s_\e^4 \|\nabla_x \twe\|_{L^2((0,T) \times \O_\e)} \|\uu_\e - \twe\|_{L^2((0,T) \times \O_\e)} \\
		&\quad + C \s_\e^4 \|\uu_\e - \twe\|_{L^6((0,T) \times \O_\e)}^2 \|\nabla_x \twe\|_{L^\frac32((0,T) \times \O_\e)} \\
		&\leq C \sigma_\e^4 \|\uu_\e - \twe\|_{L^2((0,T) \times \O_\e)} + C \s_\e^4 \|\nabla_x \ww_\e\|_{L^2((0,T) \times \O_\e)} \|\uu_\e - \twe\|_{L^2((0,T) \times \O_\e)} \\
		&\quad + C \s_\e^4 \|\nabla_x (\twe - \ww_\e)\|_{L^2((0,T) \times \O_\e)} \|\uu_\e - \twe\|_{L^2((0,T) \times \O_\e)} \\
		&\quad + C \s_\e^4 \|\uu_\e - \twe\|_{L^6((0,T) \times \O_\e)}^2 \|\nabla_x \ww_\e\|_{L^\frac32((0,T) \times \O_\e)} \\
		&\quad + C \s_\e^4 \|\uu_\e - \twe\|_{L^6((0,T) \times \O_\e)}^2 \|\nabla_x (\twe - \ww_\e)\|_{L^\frac32((0,T) \times \O_\e)} \\
		&\leq C_\delta \sigma_\e^6 + (2\delta + \e + C \s_\e^2 \e^{\frac{2\a}{3} - \a}) \s_\e^2 \|\nabla_x(\uu_\e - \twe)\|_{L^2((0,T) \times \O_\e)}^2 \\
		&\quad + C \s_\e^5 \e^{\frac{\a}{2} - \a} \|\nabla_x (\uu_\e - \twe)\|_{L^2((0,T) \times \O_\e)} \\
		&\leq C_\delta \sigma_\e^6 (1 + \s_\e^2 \e^{-\a}) + (3\delta + \e + C \s_\e^2 \e^{-\frac{\a}{3}}) \s_\e^2 \|\nabla_x(\uu_\e - \twe)\|_{L^2((0,T) \times \O_\e)}^2.
	\end{align*}
	
	Note that $\s_\e^8 \e^{-\a} = \e^{12-5\a} \to 0$ if $1 < \a < \frac{12}{5}$, and also $\s_\e^2 \e^{-\frac{\a}{3}} = \e^{3-\frac{4\a}{3}} < \delta$ just for $$ 1 < \a < \frac94 < \frac{12}{5}.$$
	
	For the stress term, we write
	\begin{align*}
		\mu \s_\e^2 \int_0^\tau \int_{\O_\e} \nabla_x \twe : \nabla_x (\uu_\e - \twe) \dx \dt &= \mu \s_\e^2 \int_0^\tau \int_{\O_\e} (- \Delta_x W_\e \uu - \vc z_\e) \cdot (\uu_\e - \twe) \dx \dt \\
		&\quad + \mu \s_\e^2 \int_0^\tau \int_{\O_\e} \nabla_x (\twe - \ww_\e) : \nabla_x (\uu_\e - \twe) \dx \dt.
	\end{align*}
	
	Hence, the only remaining term to estimate is
	\begin{align*}
		&\mu \s_\e^2 \int_0^\tau \int_{\O_\e} \nabla_x (\twe - \ww_\e) : \nabla_x (\uu_\e - \twe) \dx \dt \\
		&\leq C \s_\e^2 \|\nabla_x(\twe - \ww_\e)\|_{L^2((0,T) \times \O_\e)} \|\nabla_x (\uu_\e - \twe)\|_{L^2((0,T) \times \O_\e)} \\
		&\leq C_\delta \s_\e^2 \e^{-\a} + \delta \s_\e^2 \|\nabla_x(\uu_\e - \twe)\|_{L^2((0,T) \times \O_\e)}^2,
	\end{align*}
	and in turn we need $\s_\e^2 \e^{-\a} = \e^{3 - 2\a} \to 0$, leading to $\a < \frac32$.
	
	The final energy estimate is then
	\begin{align*}
		&\|\tilde{\uu}_\e - \uu\|_{L^2((0,T) \times \O)}^2 + \|\tilde \vr_\e - \vr\|_{L^2((0,T) \times \O)}^2 \\
		&\lesssim \|\sqrt{\tilde \vr_\e^0} (\tilde \uu_\e^0 - \uu(0, \cdot))\|_{L^2(\O)}^2 + \|\tilde \vr_\e^0 - \vr(0, \cdot)\|_{L^2(\O)}^2 + \e^{\a - 1} + \e^{3 - \a} + \e^{3 - 2\a} \\
		&\lesssim \|\sqrt{\tilde \vr_\e^0} (\tilde \uu_\e^0 - \uu(0, \cdot))\|_{L^2(\O)}^2 + \|\tilde \vr_\e^0 - \vr(0, \cdot)\|_{L^2(\O)}^2 + \e^{\a - 1} + \e^{3 - 2\a},
	\end{align*}
	thus finishing the proof of Theorem~\ref{limitDomain}.
	
	\begin{remark}
		We shall compare the requirement $1 < \a < \frac32$ with the one made in \cite{Lu-Oschmann}: in there, $\s_\e^4$ is replaced by a more general $\e^\l$ and the assumption $\lambda > \a$ is made. Going to our setting where $\lambda = 2(3-\a)$, this means $\a < 2$; it is thus expected that the above results holds in the range $\a \in (1, 2)$ with a possibly different error term. Moreover, this hints that the error estimates given in \cite{ALL-NS2} also hold just for this range (in a bounded domain), although the \emph{qualitative} convergence result holds for all $1 <\a < 3$.
	\end{remark}
	
	\section{Pressure convergence}\label{sec:pressConv}
	In this section, we will show convergence rates for the pressure in the case of $\O = \mathbb{T}^3$. The estimates for a bounded domain $\O \subset \R^3$ then follow with obvious changes. More precisely, we aim to prove:
	\begin{theorem}\label{pressRate}
		Let $\tilde P_\e$ be the pressure extension from Section~\ref{sec:pressExt}, and let $\tilde p_\e = \d_t \tilde P_\e$. Then, under the assumptions of Theorem~\ref{limitTorus}, we have
		\begin{align*}
			&\|\tilde p_\e - p\|_{W^{-1,2}(0,T; L_0^2(\O))} \leq C (\s_\e^2 \|\sqrt{\tilde \vr_\e^0} (\tilde \uu_\e^0 - \uu(0, \cdot))\|_{L^2(\O)} + \|\tilde \vr_\e^0 - \vr^0\|_{L^2(\O)} + \e^\frac{\a - 1}{2} + \s_\e).
		\end{align*}
	\end{theorem}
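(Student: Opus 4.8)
Here is how I would prove Theorem~\ref{pressRate}.

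\textbf{Reduction.} The plan is first to pass from the $W^{-1,2}$-in-time statement to an $L^2$-in-time bound for the antiderivative of the pressure, and then to a $W^{-1,2}(\O)$-bound for its spatial gradient. Put $P(t)=\int_0^t p(s,\cdot)\,{\rm d}s$, so that $p=\d_t P$ and $\tilde p_\e-p=\d_t(\tilde P_\e-P)$; since $\|\d_t g\|_{W^{-1,2}(0,T;L_0^2(\O))}\le\|g\|_{L^2(0,T;L_0^2(\O))}$, we get $\|\tilde p_\e-p\|_{W^{-1,2}(0,T;L_0^2(\O))}\le\|\tilde P_\e-P\|_{L^2(0,T;L_0^2(\O))}$, and on the torus Ne\v{c}as' inequality gives $\|g\|_{L_0^2(\O)}\lesssim\|\Grad g\|_{W^{-1,2}(\O)}$. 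It therefore suffices to prove, for a.a.\ $t\in(0,T)$ and all $\bvp\in W^{1,2}(\O)$, the pointwise-in-$t$ bound $|\langle\Grad(\tilde P_\e-P)(t),\bvp\rangle_\O|\lesssim\Theta_\e\,\|\bvp\|_{W^{1,2}(\O)}$, where $\Theta_\e:=\s_\e^2\|\sqrt{\tilde\vr_\e^0}(\tilde\uu_\e^0-\uu(0,\cdot))\|_{L^2(\O)}+\|\tilde\vr_\e^0-\vr^0\|_{L^2(\O)}+\e^{(\a-1)/2}+\s_\e$. As $\Theta_\e$ will be independent of $t$, integrating over $(0,T)$ finishes the proof. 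For a bounded domain the same scheme applies after inserting the boundary corrector of Section~\ref{sec:bdDom}, which only worsens the power of $\e$ in the error.

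\textbf{Unravelling the pairing.} Recall $\langle\Grad\tilde P_\e(t),\bvp\rangle_\O=\langle\Grad P_\e(t),R_\e(\bvp)\rangle_{\O_\e}$, with $\Grad P_\e$ as in \eqref{Pe-def} (precisely, the functional defining $\tilde H_\e$ in Section~\ref{sec:pressExt}), together with Darcy's law integrated in time, $\Grad P(t)=\mathbf F(t)-\mu M_0\vU(t)$ with $\mathbf F=\int_0^t\vr\vf$ and $\vU=\int_0^t\uu$, i.e.\ $\mathbf F-\mu M_0\vU=\Grad P$. The pieces $\s_\e^4(\vr_\e\uu_\e-\vr_\e^0\uu_\e^0)$ and $\s_\e^4\dive\Psi_\e$ of $\Grad P_\e$ are bounded exactly as in Section~\ref{sec:pressExt} by $\big(\s_\e+\s_\e^2\|\sqrt{\tilde\vr_\e^0}(\tilde\uu_\e^0-\uu(0,\cdot))\|_{L^2}\big)\|\bvp\|_{W^{1,2}}\lesssim\Theta_\e\|\bvp\|_{W^{1,2}}$, using \eqref{unifBds}, \eqref{init2}, \eqref{restOp} and $\|\vr_\e\uu_\e(t)\|_{L^2}\lesssim\s_\e^{-2}$, $\|\Psi_\e(t)\|_{L^2}\lesssim\|\uu_\e\|_{L^2(0,T;W^{1,2})}^2\lesssim\s_\e^{-2}$. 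The viscous term $\s_\e^2\mu\langle\Delta_x\vU_\e,R_\e(\bvp)\rangle_{\O_\e}$ will be shown below to equal $-\mu\langle M_0\vU,R_\e(\bvp)\rangle_{\O_\e}$ up to $O(\Theta_\e)\|\bvp\|_{W^{1,2}}$. Granting this, the force term combines with it into $\langle\mathbf F_\e,R_\e(\bvp)\rangle_{\O_\e}-\mu\langle M_0\vU,R_\e(\bvp)\rangle_{\O_\e}=\langle\mathbf F_\e-\mathbf F,R_\e(\bvp)\rangle_{\O_\e}+\langle\Grad P,R_\e(\bvp)\rangle_{\O_\e}$, where $|\langle\mathbf F_\e-\mathbf F,R_\e(\bvp)\rangle_{\O_\e}|\lesssim\|\tilde\vr_\e-\vr\|_{L^2((0,T)\times\O)}\|\bvp\|_{W^{1,2}}$, and $\langle\Grad P,R_\e(\bvp)\rangle_{\O_\e}-\langle\Grad P,\bvp\rangle_\O=-\langle P,\dive R_\e(\bvp)-\dive\bvp\rangle_{\O_\e}+\langle P,\dive\bvp\rangle_{\O\setminus\O_\e}$ is $\lesssim\e^{3(\a-1)/2}\|\bvp\|_{W^{1,2}}$ because $R_\e$ preserves divergences in $\O_\e$ (killing the first term), $|\O\setminus\O_\e|\lesssim\e^{3(\a-1)}$ and $P\in L^\infty$. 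Since $\langle\Grad(\tilde P_\e-P)(t),\bvp\rangle_\O=\langle\Grad P_\e(t),R_\e(\bvp)\rangle_{\O_\e}-\langle\Grad P(t),\bvp\rangle_\O$, collecting these estimates gives the desired bound, modulo the viscous claim.

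\textbf{The viscous term (the heart of the matter).} Here $\s_\e^2\mu\langle\Delta_x\vU_\e,R_\e(\bvp)\rangle_{\O_\e}=-\s_\e^2\mu\langle\Grad\vU_\e,\Grad R_\e(\bvp)\rangle_{\O_\e}$. Write $\vU_\e=W_\e\vU+(\vU_\e-W_\e\vU)$; the relative-energy estimate behind Theorem~\ref{limitTorus} yields $\s_\e\|\Grad(\vU_\e-W_\e\vU)\|_{L^\infty(0,T;L^2(\O_\e))}\lesssim(E_\e(0)+\e^{\a-1}+\e^{3-\a})^{1/2}$, so by \eqref{restOp} the remainder contributes $\lesssim(E_\e(0)+\e^{\a-1}+\e^{3-\a})^{1/2}\|\bvp\|_{W^{1,2}}$. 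For the leading part, $-\s_\e^2\mu\langle\Grad(W_\e\vU),\Grad R_\e(\bvp)\rangle_{\O_\e}=\s_\e^2\mu\langle\Delta_x(W_\e\vU),R_\e(\bvp)\rangle_{\O_\e}$ (legitimate as $R_\e(\bvp)\in W_0^{1,2}(\O_\e)$), I invoke the identity from Section~\ref{sec:conv}, $-\Delta_x(W_\e\vU)+\Grad(\vq_\e\cdot\vU)=(-\Delta_x W_\e+\Grad\vq_\e)\vU+\vc Z_\e$ with $\|\vc Z_\e\|_{L^2((0,T)\times\O_\e)}\lesssim\s_\e^{-1}$. The $\vc Z_\e$-term is $\lesssim\s_\e\|\bvp\|_{W^{1,2}}$; the term $\s_\e^2\mu\langle\Grad(\vq_\e\cdot\vU),R_\e(\bvp)\rangle_{\O_\e}=-\s_\e^2\mu\langle\vq_\e\cdot\vU,\dive\bvp\rangle_{\O_\e}$ is $\lesssim\s_\e^2\|\vq_\e\|_{L^2(\O_\e)}\|\vU\|_{L^\infty}\|\dive\bvp\|_{L^2}\lesssim\s_\e\|\bvp\|_{W^{1,2}}$, where it is essential that $\dive R_\e(\bvp)=\dive\bvp$ (so no factor $\s_\e^{-1}$ is lost) and that $\|\vq_\e\|_{L^2}\lesssim\s_\e^{-1}$ by \eqref{estGradv}; finally the main term equals $\s_\e^2\mu\sum_j\langle\Delta_x\vv_\e^j-\Grad q_\e^j,\,U^j R_\e(\bvp)\rangle_{\O_\e}$, and writing $\s_\e^2(\Grad q_\e^j-\Delta_x\vv_\e^j)=M_0\vc e^j+\Xi_\e^j$ with $\|\Xi_\e^j\|_{W^{-1,2}(\O)}\lesssim\e$ by \eqref{errorM}, this equals $-\mu\langle M_0\vU,R_\e(\bvp)\rangle_{\O_\e}$ up to $\lesssim\sum_j\|\Xi_\e^j\|_{W^{-1,2}(\O)}\|U^j R_\e(\bvp)\|_{W^{1,2}(\O)}\lesssim\e\,\s_\e^{-1}\|\bvp\|_{W^{1,2}}=\e^{(\a-1)/2}\|\bvp\|_{W^{1,2}}$, where $\|U^j R_\e(\bvp)\|_{W^{1,2}(\O)}\lesssim\|\vU\|_{W^{1,\infty}}\big(\|R_\e(\bvp)\|_{L^2}+\|\Grad R_\e(\bvp)\|_{L^2}\big)\lesssim\s_\e^{-1}\|\bvp\|_{W^{1,2}}$ and $W^{3,2}(\O)\hookrightarrow W^{1,\infty}(\O)$. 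This proves the viscous claim.

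\textbf{Conclusion and the obstacle.} Collecting everything and using $E_\e(0)\lesssim\s_\e^4\|\sqrt{\tilde\vr_\e^0}(\tilde\uu_\e^0-\uu(0,\cdot))\|_{L^2}^2+\|\tilde\vr_\e^0-\vr^0\|_{L^2}^2+\s_\e^4$, the elementary bounds $\e<\s_\e$, $\e^{3-\a}=\s_\e^2<\s_\e$, $\e^{3(\a-1)/2}\le\e^{\a-1}<\e^{(\a-1)/2}$ (all for $1<\a<3$), and Theorem~\ref{limitTorus} to estimate $\|\tilde\vr_\e-\vr\|_{L^2((0,T)\times\O)}$, one obtains $|\langle\Grad(\tilde P_\e-P)(t),\bvp\rangle_\O|\lesssim\Theta_\e\|\bvp\|_{W^{1,2}}$ uniformly in $t$; integrating over $(0,T)$ yields the theorem. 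The main obstacle is the viscous term: one must replace $\vU_\e$ by the capacity extension $W_\e\vU$ at the quantitative price dictated by the relative-energy inequality, exploit the exact divergence preservation of $R_\e$ to tame the local pressures $\vq_\e$ (which are only $O(\s_\e^{-1})$ in $L^2$), and apply the sharp homogenization estimate \eqref{errorM} against the rough test field $U^j R_\e(\bvp)$, whose $W^{1,2}$-norm blows up like $\s_\e^{-1}$ --- it is precisely the balance $\e\cdot\s_\e^{-1}=\e^{(\a-1)/2}$ that produces the stated rate.
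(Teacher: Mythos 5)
Your overall architecture is the same as the paper's: test $\nabla_x P_\e - \nabla_x P$ against $R_\e(\bvp)$, control the inertial and convective pieces by \eqref{unifBds} and \eqref{restOp}, split $\vU_\e$ into $W_\e\vU$ plus a remainder controlled by the relative-energy estimate \eqref{finalREI}, recover $M_0\vU$ from $\s_\e^2(\Grad\vq_\e-\Delta_x W_\e)$ via \eqref{errorM} at the price $\e\cdot\s_\e^{-1}=\e^{(\a-1)/2}$, and then descend from the antiderivatives to the pressures by continuity of $\d_t:L^2(0,T)\to W^{-1,2}(0,T)$ and Ne\v{c}as/Poincar\'e--Wirtinger. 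The one structural difference is that you compare $\tilde P_\e$ directly with $P=\int_0^t p$, whereas the paper introduces the duality extension $\Pi_\e$ of $1_{\O_\e}P$ (so that $\langle\Grad(\tilde P_\e-\Pi_\e),\bvp\rangle_\O=\langle\Grad(P_\e-P),R_\e(\bvp)\rangle_{\O_\e}$ holds \emph{by definition}, with no discrepancy term) and only at the very end pays $\|p\|_{W^{-1,2}(0,T;L^2_0(\O\setminus\O_\e))}\lesssim\e^{3(\a-1)}$.

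The genuine gap is your twice-used claim that $\dive R_\e(\bvp)=\dive\bvp$ for a \emph{general} $\bvp\in W^{1,2}(\O)$. Lemma~\ref{Restriction operator} only asserts that $R_\e$ preserves the property of being divergence-free; for Allaire's construction one has, cell by cell, $\dive R_\e(\bvp)=\dive\bvp+c_{k,\e}\chi_{B(x_{k,\e},\e/2)\setminus T_{\e,k}}$ with $c_{k,\e}=|B\setminus T|^{-1}\int_{T_{\e,k}}\dive\bvp$, so exact preservation fails. This matters because both places where you invoke it are exactly the places where the naive bound $\|\dive R_\e(\bvp)\|_{L^2}\lesssim\|\Grad R_\e(\bvp)\|_{L^2}\lesssim\s_\e^{-1}\|\bvp\|_{W^{1,2}}$ would destroy the estimate: the term $\s_\e^2\langle\vq_\e\cdot\vU,\dive R_\e(\bvp)\rangle_{\O_\e}$ would then only be $O(\s_\e^2\cdot\s_\e^{-1}\cdot\s_\e^{-1})=O(1)$, and the discrepancy $\langle P,\dive R_\e(\bvp)-\dive\bvp\rangle_{\O_\e}$ would be $O(\s_\e^{-1})$, i.e.\ divergent. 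The argument is reparable, but you must either (i) quote and use the \emph{quantitative} divergence property of Allaire's operator, which gives $\|\dive R_\e(\bvp)-\dive\bvp\|_{L^2(\O_\e)}\lesssim\e^{3(\a-1)/2}\|\Grad\bvp\|_{L^2(\O)}$ and restores both of your estimates with harmless extra factors, or (ii) follow the paper: define $\Pi_\e$ by duality to eliminate the discrepancy term altogether, and estimate the residual $\vq_\e$-terms without putting $\s_\e^{-1}$ on the test function, e.g.\ via Allaire's bound $\|\vq_\e\|_{W^{-1,2}(\O)}\lesssim\e$ paired against $\|u_j R_\e(\bvp)\|_{W^{1,2}}\lesssim\s_\e^{-1}\|\bvp\|_{W^{1,2}}$. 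As written, the step does not follow from anything stated in the paper.
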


	\begin{proof}
		\providecommand{\vF}{\vc{F}}
		Recall from Section~\ref{sec:pressExt} that $\tilde P_\e \in L^\infty(0,T;L_0^2(\O))$ is uniformly bounded. Let $p$ be the pressure of Darcy's law \eqref{Darcy}, and define $\Pi_\e$ by
		\begin{align*}
			\langle \nabla_x \Pi_\e, \vv \rangle_{\O} = \langle \nabla_x P, R_\e(\vv) \rangle_{\O_\e} \mbox{ for any } \vv \in L^2(0,T; W^{1,2}(\Omega)), \qquad P = \int_0^t p(s, \cdot) \, {\rm d} s.
		\end{align*}
		Similar arguments as before and the strong solution property of $(\vr, \uu)$ show $\Pi_\e \in L^\infty(0,T; L_0^2(\O))$, and $\d_t \Pi_\e |_{\O_\e} = p$. In other words, $\Pi_\e$ is the extension of $1_{\O_\e} P$, defined by duality with the help of the operator $R_\e$. We will show that $\nabla_x (\tilde P_\e - \Pi_\e) \to 0$ strongly in $L^2(0,T; W^{-1,2}(\O))$. This is essentially the same procedure as in \cite{ALL-NS1, ALL-NS2}.\\
		
		To this end, let $\vv \in L^2(0,T; W^{1,2}(\O))$ and use that by definition
		\begin{align}\label{dva}
			\langle \nabla_x \tilde P_\e - \nabla_x \Pi_\e, \vv \rangle_{\O} = \langle \nabla_x P_\e - \nabla_x P, R_\e(\vv) \rangle_{\O_\e}.
		\end{align}
		
		Now, set
		\begin{align*}
			\vU = \int_0^t \uu(s, \cdot) \, {\rm d} s, && \vF = \int_0^t (\vr \vf)(s, \cdot) \, {\rm d} s, && P = \int_0^t p(s, \cdot) \, {\rm d} s,
		\end{align*}
		where $(\vr, \uu, p)$ is the strong solution to Darcy's law \eqref{Darcy}. Then, since $M_0$ is constant, for any $t \in (0,T)$
		\begin{align*}
			\mu M_0 \vU = \vF - \nabla_x P \mbox{ in } \mathcal{D}'(\O);
		\end{align*}
		since we have actually a strong solution, this also holds pointwise, so especially pointwise in $\O_\e$.\\
		
		To shorten the notation, we will write just $\langle \cdot , \cdot \rangle$ instead of $\langle \cdot , \cdot \rangle_{\O / \O_\e}$ in what follows. Replacing the gradients in \eqref{dva} and integrating by parts, we get
		\begin{align*}
			\langle \nabla_x \tilde P_\e - \nabla_x \Pi_\e, \vv \rangle &= \langle \mathbf{F}_\e- \s_\e^4 (\vr_\e \uu_\e - \vr_\e^0 \uu_{\e}^0) - \s_\e^4 \dive \Psi_\e + \s_\e^2 \mu \Delta_x \vU_\e, R_\e(\vv) \rangle - \langle \vF - \mu M_0 \vU, R_\e(\vv) \rangle \\
			&= \langle \vF_\e - \vF, R_\e(\vv) \rangle - \s_\e^4 \langle \uu_\e-\uu_{\e0}, R_\e(\vv) \rangle + \s_\e^4 \langle \Psi_\e, \nabla_x R_\e(\vv) \rangle \\
			&\quad - \s_\e^2 \mu \langle \nabla_x \vU_\e, \nabla_x R_\e(\vv) \rangle + \mu \langle M_0 \vU, R_\e(\vv) \rangle.
		\end{align*}
		
		The first three terms are handled as follows: first,
		\begin{align*}
			| \langle \vF_\e - \vF, R_\e(\vv) \rangle | &= \int_0^\tau \int_{\O_\e} (\vr_\e - \vr) \vf \cdot R_\e(\vv) \dx \dt \leq C \|\vr_\e - \vr\|_{L^2((0,T) \times \O_\e)} \|R_\e\vv\|_{L^2((0,T) \times \O_\e)} \\
			&\leq C \|\vr_\e - \vr\|_{L^2((0,T) \times \O_\e)} \|\nabla_x \vv\|_{L^2((0,T) \times \O)} \\
			&\leq C \big( \sigma_\e^2 \|\sqrt{\tilde \vr_\e^0} ( \tilde{\uu}_{\e}^0 - \uu(0, \cdot) )\|_{L^2(\O)} + \|\tilde \vr_\e^0 - \vr(0, \cdot)\|_{L^2(\O)} + \e^\frac{\a - 1}{2} + \s_\e \big) \|\nabla_x \vv\|_{L^2((0,T) \times \O)},
		\end{align*}
		where we used the bounds on $(\vr_\e - \vr)$ already obtained in \eqref{finalREI}.\\
		
		Next,
		\begin{align*}
			| \s_\e^4 \langle \vr_\e \uu_\e, R_\e(\vv) \rangle | &\leq C \s_\e^4 \|\sqrt{\vr_\e}\|_{L^\infty(0,T; L^\infty(\O_\e))} \|\sqrt{\vr_\e} \uu_\e\|_{L^\infty(0,T; L^2(\O_\e))} \|R_\e(\vv)\|_{L^2((0,T) \times \O_\e)} \\
			&\leq C \s_\e^2 \|\nabla_x \vv\|_{L^2((0,T) \times \O_\e)},
		\end{align*}
		and
		\begin{align*}
			| \s_\e^4 \langle \vr_\e^0 \uu_\e^0, R_\e(\vv) \rangle | \leq C \s_\e^4 \|\sqrt{\vr_\e^0}\|_{L^\infty(\O_\e)} \|\sqrt{\vr_\e^0} \uu_\e^0\|_{L^2(\O_\e)} \|R_\e(\vv)\|_{L^2((0,T) \times \O_\e} \leq C \s_\e^2 \|\nabla_x \vv\|_{L^2((0,T) \times \O_\e)}.
		\end{align*}
		
		Moreover, we get
		\begin{align*}
			| \s_\e^4 \langle \Psi_\e, \nabla_x R_\e(\vv) \rangle | &\leq C \s_\e^4 \|\Psi_\e\|_{L^\infty(0,T; L^2(\O_\e))} \|\nabla_x R_\e(\vv)\|_{L^2((0,T) \times \O_\e)} \\
			&\leq C \s_\e^4 \|\vr_\e\|_{L^\infty(0,T; L^\infty(\O_\e))} \|\uu_\e\|_{L^2(0,T; L^4(\O_\e))}^2 \|\nabla_x R_\e(\vv)\|_{L^2((0,T) \times \O_\e)} \\
			&\leq C \s_\e \|\nabla_x \vv\|_{L^2((0,T) \times \O)}.
		\end{align*}
		
		For the remaining term, we write
		\begin{align*}
			\s_\e^2 \mu \langle \nabla_x \vU_\e, \nabla_x R_\e(\vv) \rangle = \s_\e^2 \mu \langle \nabla_x (\vU_\e - W_\e \uu), \nabla_x R_\e(\vv) \rangle + \s_\e^2 \mu \langle \nabla_x (W_\e \uu), \nabla_x R_\e(\vv) \rangle.
		\end{align*}
		Now, from the error estimates \eqref{finalREI} we infer
		\begin{align*}
			&| \s_\e^2 \mu \langle \nabla_x (\vU_\e - W_\e \uu), \nabla_x R_\e(\vv) \rangle | \leq C \s_\e^2 \|\nabla_x (\vU_\e - W_\e \uu)\|_{L^2((0,T) \times \O_\e)} \|\nabla_x R_\e(\vv)\|_{L^2((0,T) \times \O_\e)} \\
			&\leq C (\s_\e^2 \|\sqrt{\tilde \vr_\e^0} (\tilde \uu_\e^0 - \uu(0, \cdot))\|_{L^2(\O)} + \|\tilde \vr_\e^0 - \vr^0\|_{L^2(\O)} + \e^\frac{\a - 1}{2} + \s_\e) \|\nabla_x \vv\|_{L^2((0,T) \times \O)}.
		\end{align*}
		Hence, it remains to estimate
		\begin{align*}
			\mu \langle M_0 \vU, R_\e(\vv) \rangle - \s_\e^2 \mu \langle \nabla_x (W_\e \uu), \nabla_x R_\e(\vv) \rangle .
		\end{align*}
		But integration by parts and the very same arguments as before show
		\begin{align*}
			| \mu \langle M_0 \vU, R_\e(\vv) \rangle - \s_\e^2 \mu \langle \nabla_x (W_\e \uu), \nabla_x R_\e(\vv) \rangle | \leq C(\e^\frac{\a - 1}{2} + \s_\e) + | \mu \s_\e^2 \langle (\vc q_\e \cdot \nabla_x) \uu, R_\e(\vv) \rangle |,
		\end{align*}
		and eventually,
		\begin{align*}
			| \mu \s_\e^2 \langle (\vc q_\e \cdot \nabla_x) \uu, R_\e(\vv) \rangle | &\leq C \s_\e^2 \|\vc q_\e\|_{W^{-1,2}(\O)} \|\nabla_x \uu\|_{W^{1, \infty}(\O)} \|R_\e(\vv)\|_{L^2(0,T; W_0^{1,2}(\O_\e))} \\
			&\leq C \e \s_\e \|\nabla_x \vv\|_{L^2((0,T) \times \O)},
		\end{align*}
		where we used (see \cite[Lemma~2.4.1]{ALL-NS1}) $$ \|\vc q_\e\|_{W^{-1,2}(\O)} \leq C \e.$$
		
		Finally, we put together the above estimates to conclude
		\begin{align*}
			\|\nabla_x(\tilde P_\e - \Pi_\e)\|_{L^2(0,T; W^{-1,2}(\O))} \leq C (\s_\e^2 \|\sqrt{\tilde \vr_\e^0} (\tilde \uu_\e^0 - \uu(0, \cdot))\|_{L^2(\O)} + \|\tilde \vr_\e^0 - \vr^0\|_{L^2(\O)} + \e^\frac{\a - 1}{2} + \s_\e).
		\end{align*}
		
		Since $\tilde P_\e$ and $\Pi_\e$ have zero mean value over $\O$, Poincar\'{e}-Wirtinger inequality also gives
		\begin{align*}
			\|\tilde P_\e - \Pi_\e\|_{L^2(0,T; L_0^2(\O))} \leq C (\s_\e^2 \|\sqrt{\tilde \vr_\e^0} (\tilde \uu_\e^0 - \uu(0, \cdot))\|_{L^2(\O)} + \|\tilde \vr_\e^0 - \vr^0\|_{L^2(\O)} + \e^\frac{\a - 1}{2} + \s_\e).
		\end{align*}
		
		Furthermore, we can use $\d_t (\tilde P_\e , \Pi_\e) |_{\d \O_\e} = (p_\e, 1_{\O_\e} p)$ and continuity of $\d_t$ from $L^2(0,T)$ to $W^{-1,2}(0,T)$ to deduce
		\begin{align*}
			\|\tilde p_\e - p\|_{W^{-1,2}(0,T; L_0^2(\O))} &\leq \|p_\e - p\|_{W^{-1,2}(0,T; L_0^2(\O_\e))} + \|p\|_{W^{-1,2}(0,T; L_0^2(\O \setminus \O_\e))} \\
			&\leq \|p_\e - p\|_{W^{-1,2}(0,T; L_0^2(\O_\e))} + C \e^{3(\a - 1)} \\
			&= \|\d_t (\tilde P_\e - \Pi_\e)\|_{W^{-1,2}(0,T; L_0^2(\O_\e))} + C \e^{3(\a - 1)} \\
			&\leq C \|\tilde P_\e - \Pi_\e\|_{L^2(0,T; L_0^2(\O))} + C \e^{3(\a - 1)} \\
			&\leq C (\s_\e^2 \|\sqrt{\tilde \vr_\e^0} (\tilde \uu_\e^0 - \uu(0, \cdot))\|_{L^2(\O)} + \|\tilde \vr_\e^0 - \vr^0\|_{L^2(\O)} + \e^\frac{\a - 1}{2} + \s_\e).
		\end{align*}
		
		Note that this is precisely the convergence rate from \cite{ALL-NS2} for the stationary setting. This finishes the proof of Theorem~\ref{pressRate}.
	\end{proof}
	
	\begin{remark}
		Choosing in the above estimates the function $\vv \in W^{1,2}(\O)$ to be independent of time, we can show by the same arguments that
		\begin{align*}
			\|\tilde P_\e - \Pi_\e\|_{[L^\infty(0,T; L^2(\O))]'} \leq C (\s_\e^2 \|\sqrt{\tilde \vr_\e^0} (\tilde \uu_\e^0 - \uu(0, \cdot))\|_{L^2(\O)} + \|\tilde \vr_\e^0 - \vr^0\|_{L^2(\O)} + \e^\frac{\a - 1}{2} + \s_\e).
		\end{align*}
		This of course is also a consequence of the embedding $L^2 = [L^2]' \hookrightarrow [L^\infty]'$.
	\end{remark}
	
	\begin{remark}
		For a bounded smooth domain $\Omega \subset \R^3$, the same arguments as before give rise to
		\begin{align*}
			\|\tilde p_\e - p\|_{W^{-1,2}(0,T; L_0^2(\O))} \leq C (\s_\e^2 \|\sqrt{\tilde \vr_\e^0} (\tilde \uu_\e^0 - \uu(0, \cdot))\|_{L^2(\O)} + \|\tilde \vr_\e^0 - \vr^0\|_{L^2(\O)} + \e^\frac{\a - 1}{2} + \e^{\frac{3-2\a}{2}}).
		\end{align*}
	\end{remark}
	
	\section{Existence of strong solutions for the target system} \label{sec:ExistenceStrong}
	
	In this last section, we aim to prove the following result. 
	
	\begin{theorem} \label{Local Existence}
		Let $\Omega \subset \mathbb{R}^3$ be a bounded domain of class $C^3$, and let $\vr_0 \in W^{3,2}(\Omega)$, $\vr_0 \geq 0$, and $\vf$ such that
		\begin{equation*}
			\vf \in L^\infty(0,T; W^{3,2}(\Omega; \mathbb{R}^3)), \quad \partial_t \vf \in L^2(0,T; W^{2,2}(\Omega; \mathbb{R}^3))
		\end{equation*}
		be given functions. 
		
		Then, there exists a positive time $T^*$ such that problem \eqref{Darcy} admits a unique solution $(\vr, \vu)$ in $(0,T^*) \times \Omega$ such that $\vr(0,\cdot)=\vr_0$, $\vr\geq 0$ in $(0,T^*) \times \Omega$, and 
		\begin{align*}
			\vr \in C([0,T^*]; W^{3,2}(\Omega)), &\quad \partial_t \vr \in L^2(0,T^*; W^{2,2}(\Omega)), \\
			\vu \in L^2(0,T^*; W^{3,2}(\Omega)), &\quad \partial_t \vu \in L^2(0,T^*; W^{2,2}(\Omega)). 
		\end{align*}
	\end{theorem}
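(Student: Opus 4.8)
The plan is to reformulate \eqref{Darcy} as a transport equation for $\vr$ coupled to an elliptic problem producing $\vu$ from $\vr$, and then to solve the coupled system by a Picard-type iteration that is contractive in a low-regularity norm. Since $M_0$ is a \emph{constant}, symmetric, positive definite matrix, it is invertible with inverse $K_0 = M_0^{-1}$, and the momentum equation in \eqref{Darcy} reads $\vu = \mu^{-1} K_0(\vr\vf - \nabla_x p)$. Imposing $\dive \vu = 0$ in $\O$ and $\vu\cdot\vc n = 0$ on $\d\O$ turns this into the conormal Neumann problem
\begin{align*}
\dive(K_0\nabla_x p) = \dive(K_0\,\vr\vf) \quad\text{in }\O, \qquad (K_0\nabla_x p)\cdot\vc n = (K_0\,\vr\vf)\cdot\vc n \quad\text{on }\d\O,
\end{align*}
for the uniformly elliptic constant-coefficient operator $p\mapsto\dive(K_0\nabla_x p)$. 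The compatibility condition is automatic by the divergence theorem, so there is a unique solution $p$ with zero mean; standard elliptic regularity for the Neumann problem on the $C^3$ domain $\O$, together with the fact that $W^{3,2}(\O)$ is a Banach algebra in three space dimensions, gives $\|p\|_{W^{k+1,2}} \lesssim \|\vr\vf\|_{W^{k,2}} \lesssim \|\vr\|_{W^{k,2}}\|\vf\|_{W^{k,2}}$, so that the map $\vr\mapsto\vu$ (which is nothing but the $K_0$-weighted Helmholtz--Leray projection of $\vr\vf$) is bounded on $W^{k,2}$ up to the top order, $\|\vu\|_{W^{3,2}}\lesssim\|\vr\|_{W^{3,2}}\|\vf\|_{W^{3,2}}$. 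Differentiating the elliptic relation in time and using $\d_t\vf\in L^2(0,T;W^{2,2})$, the same reasoning yields $\d_t\vu\in L^2(0,T;W^{2,2})$ controlled by $\|\d_t\vr\|_{W^{2,2}}$ and the data.

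Given such a $\vu$ — which by the embedding $W^{3,2}(\O)\hookrightarrow W^{1,\infty}(\O)$ is Lipschitz in space and satisfies $\dive\vu = 0$, $\vu\cdot\vc n|_{\d\O} = 0$ — the continuity equation becomes the pure transport equation $\d_t\vr + \vu\cdot\nabla_x\vr = 0$, $\vr(0,\cdot)=\vr_0$. Its flow $X_t$ preserves $\O$ (since $\vu\cdot\vc n = 0$), whence $\vr(t,\cdot)=\vr_0\circ X_t^{-1}$; in particular $\vr\ge0$ and all $L^q$-norms of $\vr$ are preserved, and the standard commutator estimates give $\|\vr(t)\|_{W^{3,2}}\le\|\vr_0\|_{W^{3,2}}\exp\!\big(C\!\int_0^t\|\vu(s)\|_{W^{3,2}}\,{\rm d}s\big)$, together with $\d_t\vr=-\vu\cdot\nabla_x\vr\in W^{2,2}$ pointwise in $t$. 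Combining the two steps, I set up the iteration $\vr^n\mapsto\vu^n\mapsto\vr^{n+1}$ starting from $\vr^0\equiv\vr_0$, and on the set $\mathcal B=\{\vr:\ \|\vr\|_{C([0,T^*];W^{3,2})}\le 2\|\vr_0\|_{W^{3,2}},\ \vr\ge0\}$ the above bounds show that, for $T^*$ small enough depending only on $\|\vr_0\|_{W^{3,2}}$ and $\|\vf\|_{L^\infty(0,T;W^{3,2})}$, the iteration maps $\mathcal B$ into itself, and moreover $\d_t\vr^n$ stays bounded in $L^2(0,T^*;W^{2,2})$ and $\d_t\vu^n$ in $L^2(0,T^*;W^{2,2})$.

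For the contraction I do \emph{not} estimate differences at top regularity (which would lose one derivative through $\vu^n-\vu^{n-1}$), but in $L^2$: the difference $\delta\vr=\vr^{n+1}-\vr^n$ solves $\d_t\delta\vr+\vu^n\cdot\nabla_x\delta\vr = -(\vu^n-\vu^{n-1})\cdot\nabla_x\vr^n$ with $\delta\vr(0)=0$, so the $L^2$ energy estimate (in which the transport term drops, as $\dive\vu^n=0$ and $\vu^n\cdot\vc n=0$) gives
\begin{align*}
\|\delta\vr(t)\|_{L^2} \lesssim \int_0^t \|\vu^n-\vu^{n-1}\|_{L^2}\,\|\nabla_x\vr^n\|_{L^\infty}\,{\rm d}s \lesssim \|\vr_0\|_{W^{3,2}}\int_0^t \|\vr^n-\vr^{n-1}\|_{L^2}\,{\rm d}s,
\end{align*}
using $W^{2,2}\hookrightarrow L^\infty$ for $\nabla_x\vr^n$ and the $L^2$-boundedness of the weighted Leray projection for $\|\vu^n-\vu^{n-1}\|_{L^2}\lesssim\|(\vr^n-\vr^{n-1})\vf\|_{L^2}\lesssim\|\vf\|_{L^\infty}\|\vr^n-\vr^{n-1}\|_{L^2}$. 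Hence, after possibly shrinking $T^*$, $\{\vr^n\}$ is Cauchy in $C([0,T^*];L^2(\O))$; interpolating the limit with the uniform bound in $C([0,T^*];W^{3,2})$ yields convergence in $C([0,T^*];W^{s,2})$ for every $s<3$, which — along with the induced convergence of $\vu^n$ and $p^n$ from the first step — lets me pass to the limit in the elliptic relation and in the transport equation. The limit $(\vr,\vu)$ solves \eqref{Darcy} with $\vr(0)=\vr_0$, $\vr\ge0$, lies in $L^\infty(0,T^*;W^{3,2})$ with $\d_t\vr\in L^2(0,T^*;W^{2,2})$ and $\vu\in L^2(0,T^*;W^{3,2})$, $\d_t\vu\in L^2(0,T^*;W^{2,2})$ by weak-$*$ lower semicontinuity, and the strong continuity $\vr\in C([0,T^*];W^{3,2})$ is recovered from the transport-regularity theory for the Lipschitz divergence-free field $\vu$ (equivalently via a Bona--Smith argument mollifying $\vr_0$). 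Uniqueness follows by applying the very same $L^2$-difference estimate to two solutions in the stated class and invoking Gr\"onwall's inequality.

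I expect the main obstacle to be the interplay between the nonlocal solution operator $\vr\mapsto\vu$ and the fixed-point argument: one must ensure this operator preserves $W^{3,2}$-regularity up to $\d\O$ — which is exactly where the $C^3$-smoothness of the boundary and the constancy (hence smooth, uniformly elliptic coefficients) of $M_0$ enter — and, at the same time, perform the contraction in a norm weak enough to absorb the derivative falling on $\vr^n$ in the term $(\vu^n-\vu^{n-1})\cdot\nabla_x\vr^n$ while still controlling all quantities through the uniform high-norm bounds. The subsequent upgrade from weak-$*$ to strong time-continuity with values in the top space $W^{3,2}$ is the other, more technical, point.
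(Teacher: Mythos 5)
Your proposal is correct and follows essentially the same route as the paper: the same splitting of \eqref{Darcy} into a transport equation for $\vr$ and a conormal Neumann problem for $p$ (via $A=(\mu M_0)^{-1}$), the same high-order energy estimates for the transport step, the same elliptic bound $\|\vu\|_{W^{3,2}}\lesssim\|\vr\|_{W^{3,2}}\|\vf\|_{W^{3,2}}$, and the same low-regularity $L^2$-difference estimate for uniqueness. The only deviation is in the fixed-point machinery: you close the argument by a Picard iteration contracting in $C([0,T^*];L^2(\O))$ and interpolate with the uniform $W^{3,2}$ bound, whereas the paper runs the identical $L^2$-difference estimate to prove continuity of the solution map $L$ and invokes Schauder's theorem on a set that is compact in $C([0,T^*];W^{2,2}(\O))$ by Aubin--Lions.
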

	\begin{remark}
		We point out that, by the Sobolev embedding $W^{1,2}(0,T) \hookrightarrow C([0,T])$, the solution $\vu$ of Theorem~\ref{Local Existence} belongs in particular to the class $$\vu \in C([0,T]; W^{2,2}(\Omega)).$$
	\end{remark}
	
	\begin{remark}
	The torus case can be proven the same way as shown below; the interested reader may consult \cite{Basaric2023error} as well.
	\end{remark}

	In order to prove Theorem~\ref{Local Existence}, we notice that, introducing the inverse of the resistance matrix $A= (\mu M_0)^{-1}$, system \eqref{Darcy} can be reformulated as 
	\begin{equation} \label{Darcy2}
		\begin{cases}
			\partial_t \vr + \vu \cdot \Grad \vr =0  &\mbox{in } (0,T) \times \Omega, \\
			\vu = \vr A \vf - A \Grad p &\mbox{in } (0,T) \times \Omega, \\
			\dive (A \Grad p) = \dive (\vr A\vf ) &\mbox{in } (0,T) \times \Omega, \\
			(A \Grad p) \cdot \textbf{n} = (\vr A \vf) \cdot \textbf{n} &\mbox{on } (0,T) \times \partial \Omega.
		\end{cases}
	\end{equation} 

	First of all, we focus on the solvability of the transport equation. 
	\begin{lemma}
		For any fixed $	\widetilde{\vu} \in L^2(0,T; W^{3,2}(\Omega))$ such that $\dive \widetilde{\vu}=0$, $\widetilde{\vu} \cdot \textbf{\textup{n}}|_{[0,T] \times \partial \Omega}=0$, and any $\vr_0 \in W^{3,2}(\Omega)$, $\vr_0\geq 0$, the transport equation 
		\begin{equation} \label{transport equation}
			\partial_t \vr + \widetilde{\vu} \cdot \Grad \vr =0 
		\end{equation}
		admits a unique solution $\vr \in C([0,T]; W^{3,2}(\Omega))$ such that $\vr \geq 0$ in $(0,T) \times \Omega$; in particular, the following estimate holds: 
		\begin{equation} \label{estimate density}
			\begin{aligned}
				&\sup_{t \in [0, T]} \| \vr(t, \cdot ) \|_{W^{3,2}(\Omega)}^2 \\
				&\leq \| \vr_0\|_{W^{3,2}(\Omega)}^2 \left(1+ C_1  \int_{0}^{T} \left[\|  \widetilde{\vu}(t, \cdot) \|_{W^{3,2}(\Omega)}^2 + 1 \right]  \dt \right) \exp \left(C_1\int_{0}^{T} \left[ \|  \widetilde{\vu}(t, \cdot) \|_{W^{3,2}(\Omega)}^2 + 1 \right] \dt \right).  
			\end{aligned}
		\end{equation}
	\end{lemma}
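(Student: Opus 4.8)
The plan is to solve \eqref{transport equation} by the standard energy method for linear transport equations, regularizing the transporting field to make the high-order estimates rigorous, and invoking the Lagrangian flow only for the nonnegativity of $\vr$. Note that, for fixed $\widetilde{\vu}$, the equation is linear in $\vr$, so once the estimate \eqref{estimate density} is established there is no finite-time breakdown and the solution lives on all of $[0,T]$.

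\emph{A priori estimate.} Assume first that $\widetilde{\vu}$ and $\vr$ are smooth. For each multi-index $\beta$ with $|\beta|\le 3$ I would apply $\partial^\beta$ to \eqref{transport equation}, multiply by $\partial^\beta\vr$, and integrate over $\O$. Since $\dive\widetilde{\vu}=0$ and $\widetilde{\vu}\cdot\mathbf n=0$ on $\partial\O$, the principal transport term vanishes,
\[
\int_{\O}(\widetilde{\vu}\cdot\Grad\partial^\beta\vr)\,\partial^\beta\vr\,\dx=-\tfrac12\int_{\O}(\dive\widetilde{\vu})\,|\partial^\beta\vr|^2\,\dx=0,
\]
so that only the commutator $[\partial^\beta,\widetilde{\vu}\cdot\Grad]\vr=\sum_{0<\gamma\le\beta}\binom{\beta}{\gamma}\partial^\gamma\widetilde{\vu}\cdot\Grad\partial^{\beta-\gamma}\vr$ contributes. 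Estimating its $L^2$-pairing with $\partial^\beta\vr$ by H\"older's inequality together with the three-dimensional Sobolev embeddings $W^{3,2}(\O)\hookrightarrow W^{1,\infty}(\O)$, $W^{2,2}(\O)\hookrightarrow L^\infty(\O)$, $W^{1,2}(\O)\hookrightarrow L^6(\O)$ and $|\O|<\infty$, every term is bounded by $C\|\widetilde{\vu}\|_{W^{3,2}(\O)}\|\vr\|_{W^{3,2}(\O)}^2$. Summing over $\beta$ and using Young's inequality yields
\[
\frac{{\rm d}}{{\rm d}t}\|\vr(t,\cdot)\|_{W^{3,2}(\O)}^2\le C_1\big(\|\widetilde{\vu}(t,\cdot)\|_{W^{3,2}(\O)}^2+1\big)\|\vr(t,\cdot)\|_{W^{3,2}(\O)}^2,
\]
and the integral form of Gr\"onwall's lemma gives exactly \eqref{estimate density}.

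\emph{Existence, uniqueness, positivity.} Next I would approximate $\widetilde{\vu}$ by fields $\widetilde{\vu}_n$ that are smooth on $[0,T]\times\O$ and still satisfy $\dive\widetilde{\vu}_n=0$ and $\widetilde{\vu}_n\cdot\mathbf n=0$ on $\partial\O$ — for instance by a Galerkin truncation in a divergence-free, tangential Hilbert basis of $\O$ — with $\widetilde{\vu}_n\to\widetilde{\vu}$ in $L^2(0,T;W^{3,2}(\O))$. For each $n$ the Cauchy--Lipschitz flow $X_n$ of $\widetilde{\vu}_n$ is well defined, the tangency condition makes $X_n(t,\cdot)$ map $\overline\O$ onto itself, and $\vr_n(t,\cdot):=\vr_0\circ X_n(t,\cdot)^{-1}$ solves \eqref{transport equation} classically, is nonnegative, and obeys \eqref{estimate density} uniformly in $n$; moreover $\partial_t\vr_n=-\widetilde{\vu}_n\cdot\Grad\vr_n$ is bounded in $L^2(0,T;W^{2,2}(\O))$ because $W^{2,2}(\O)$ is a multiplicative algebra in dimension three. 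Passing to the limit — weakly-$*$ in $L^\infty(0,T;W^{3,2}(\O))$ and strongly in $C([0,T];W^{s,2}(\O))$, $s<3$, by Aubin--Lions — produces a solution $\vr\ge0$ satisfying \eqref{estimate density} by lower semicontinuity of norms. Weak continuity into $W^{3,2}(\O)$ (read off from the equation) combined with continuity of $t\mapsto\|\vr(t,\cdot)\|_{W^{3,2}(\O)}$ (from the energy identity of the previous step, whose right-hand side lies in $L^1(0,T)$) upgrades this to $\vr\in C([0,T];W^{3,2}(\O))$. Uniqueness follows since the difference $w$ of two solutions solves the same transport equation with $w(0,\cdot)=0$, and the $L^2$-energy identity — again with vanishing boundary term — forces $w\equiv0$.

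\emph{Main obstacle.} The genuinely delicate point is the approximation step: one must regularize $\widetilde{\vu}$ while \emph{exactly} preserving both $\dive\widetilde{\vu}=0$ and $\widetilde{\vu}\cdot\mathbf n=0$, because it is precisely these two structural identities that annihilate the boundary and divergence terms and allow the $W^{3,2}$ energy estimate to close on the bounded domain $\O$; a naive mollification would destroy them. Everything else — the Sobolev bookkeeping in the commutator estimate, the compactness, and the Gr\"onwall argument — I expect to be routine.
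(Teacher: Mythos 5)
Your proposal is correct and follows essentially the same route as the paper: characteristics (plus the tangency of $\widetilde{\vu}$) for existence, uniqueness and nonnegativity, and order-by-order $L^2$ energy estimates in which the principal transport term cancels by $\dive\widetilde{\vu}=0$ and $\widetilde{\vu}\cdot\mathbf n|_{\partial\O}=0$, the commutators are controlled via the Sobolev embeddings $W^{2,2}\hookrightarrow L^\infty$ and $W^{1,2}\hookrightarrow L^6$, and the integral form of Gr\"onwall's lemma delivers \eqref{estimate density}. The only difference is presentational: you insert a structure-preserving regularization of $\widetilde{\vu}$ to justify the high-order computation, whereas the paper performs the estimates directly, exploiting that $W^{3,2}(\O)\hookrightarrow W^{1,\infty}(\O)$ already makes the flow of $\widetilde{\vu}$ well defined.
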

		\begin{proof}
		The existence and uniqueness of $\vr$ can be easily proven by the method of characteristics, as well its non-negativity in $(0,T) \times \Omega$. More precisely, defining for any $(t,x) \in [0,T] \times \Omega$ the flow map $X(s; t,x) = \gamma(s)$ with
\begin{align*}
\begin{cases}
\dot{\gamma}(s) = \tilde{\uu}(s, \gamma(s)),\\
\gamma(t) = x,
\end{cases}
\end{align*}
we find that $\vr(t,x) = \vr_0(X(0; t,x))$ is the unique solution to \eqref{transport equation}.
	
To get estimate \eqref{estimate density}, we first multiply equation \eqref{transport equation} by $\vr$, integrate over $\Omega$, and perform an integration by parts to end up with 
		\begin{equation} \label{e1}
			\frac12 \frac{\textup{d}}{\dt} \| \vr \|_{L^2(\Omega)}^2 =0. 
		\end{equation}
		Secondly, we take the gradient of \eqref{transport equation}, multiply the resulting expression by $\Grad \vr $, and integrate over $\Omega$; integrating by parts the term 
		\begin{equation*}
			\int_{\Omega} \widetilde{\vu} \cdot \Grad^2 \vr  \cdot \Grad \vr \ \dx = \frac{1}{2} \int_{\Omega} \widetilde{\vu} \cdot \Grad |\Grad \vr|^2 \ \dx =0, 
		\end{equation*}
		and using the Sobolev embedding $W^{2,2}(\Omega) \hookrightarrow L^{\infty}(\Omega)$, we get the estimate 
		\begin{equation} \label{e2}
			\frac12 \frac{\textup{d}}{\dt} \| \Grad \vr \|_{L^2(\Omega)}^2 \leq \| \Grad \widetilde{\vu} \|_{L^{\infty}(\Omega)} \| \Grad \vr \|_{L^2(\Omega)}^2 \lesssim \|  \widetilde{\vu} \|_{W^{3,2}(\Omega)} \| \vr \|_{W^{1,2}(\Omega)}^2. 
		\end{equation}
		Proceeding in a similar way for the second $D_x^2$ and third $D_x^3$ derivatives, and using the Sobolev embedding $W^{1,2}(\Omega) \hookrightarrow L^{6}(\Omega)$, we end up with 
		\begin{align}
			\frac12 \frac{\textup{d}}{\dt} \| D_x^2\vr \|_{L^2(\Omega)}^2 &\lesssim \| D_x^2 \widetilde{\vu} \|_{L^6(\Omega)} \| \Grad  \vr \|_{L^6(\Omega)} \| D_x^2 \vr \|_{L^2(\Omega)} + \| \Grad \widetilde{\vu}\|_{L^{\infty(\Omega)}} \| D_x^2 \vr \|_{L^2(\Omega)}^2 \notag \\
			&\lesssim \|  \widetilde{\vu} \|_{W^{3,2}(\Omega)} \| \vr \|_{W^{2,2}(\Omega)}^2, \label{e3}\\
			\frac12 \frac{\textup{d}}{\dt} \| D_x^3\vr \|_{L^2(\Omega)}^2 &\lesssim \| D_x^3 \widetilde{\vu} \|_{L^2(\Omega)}^2 \| \Grad \vr \|_{L^{\infty}(\Omega)}^2 + \| D_x^3 \vr \|_{L^2(\Omega)}^2 \notag \\
			&+ \| D_x^2 \widetilde{\vu} \|_{L^6(\Omega)} \| D_x^2  \vr \|_{L^6(\Omega)} \| D_x^3 \vr \|_{L^2(\Omega)} + \| \Grad \widetilde{\vu}\|_{L^{\infty}(\Omega)} \| D_x^3 \vr \|_{L^2(\Omega)}^2 \notag \\
			&\lesssim \left( \|  \widetilde{\vu} \|_{W^{3,2}(\Omega)}^2 + 1 \right)  \| \vr \|_{W^{3,2}(\Omega)}^2. \label{e4}
		\end{align}
		Summing up estimates \eqref{e1}--\eqref{e4} and integrating the resulting expression over time, it follows that for any $t \in [0,T]$ 
		\begin{equation*}
			\| \vr(t, \cdot ) \|_{W^{3,2}(\Omega)}^2 \leq \| \vr_0\|_{W^{3,2}(\Omega)}^2  + C_1 \int_{0}^{t} \left( \|  \widetilde{\vu}(s, \cdot) \|_{W^{3,2}(\Omega)}^2 + 1 \right)  \| \vr(s,\cdot) \|_{W^{3,2}(\Omega)}^2 \textup{d}s. 
		\end{equation*}
		By Gr\"onwall's lemma we obtain
		\begin{align*}
			\| &\vr(t, \cdot ) \|_{W^{3,2}(\Omega)}^2 \\
			&\leq \| \vr_0\|_{W^{3,2}(\Omega)}^2 \left(1+ C_1 \int_{0}^{t} \left[\|  \widetilde{\vu}(s, \cdot) \|_{W^{3,2}(\Omega)}^2 + 1 \right] \exp \left(C_1\int_{s}^{t} \left[ \|  \widetilde{\vu}(r, \cdot) \|_{W^{3,2}(\Omega)}^2 + 1 \right] \textup{d}r \right) \textup{d}s \right) \\
			&\leq \| \vr_0\|_{W^{3,2}(\Omega)}^2 \left(1+ C_1 \exp \left(C_1\int_{0}^{t} \left[ \|  \widetilde{\vu}(s, \cdot) \|_{W^{3,2}(\Omega)}^2 + 1 \right] \textup{d}s \right) \int_{0}^{t} \left[\|  \widetilde{\vu}(s, \cdot) \|_{W^{3,2}(\Omega)}^2 + 1 \right]  \textup{d}s \right) \\
			&\leq \| \vr_0\|_{W^{3,2}(\Omega)}^2 \left(1+ C_1  \int_{0}^{T} \left[\|  \widetilde{\vu}(t, \cdot) \|_{W^{3,2}(\Omega)}^2 + 1 \right]  \dt \right) \exp \left(C_1\int_{0}^{T} \left[ \|  \widetilde{\vu}(t, \cdot) \|_{W^{3,2}(\Omega)}^2 + 1 \right] \dt \right). 
		\end{align*}
		Since the right-hand side is independent of $t \in [0,T]$, we can take the supremum on the left-hand side and obtain estimate \eqref{estimate density}. 
	\end{proof}

	Secondly, we focus on the solvability of the elliptic system. 
	
	\begin{lemma} \label{ES}
		For any fixed $\widetilde{\vr} \in C([0,T]; W^{3,2}(\Omega))$ and $\ff \in L^2(0,T; W^{3,2}(\Omega))$, there exists a unique $p \in L^2(0,T; W^{4,2}(\Omega))$ solving the elliptic system
		\begin{equation} \label{elliptic system}
			\begin{cases}
				\dive (A \Grad p) = \dive (\widetilde{\vr} A\vf ) &\mbox{in } (0,T) \times \Omega, \\
				(A \Grad p) \cdot \textbf{\textup{n}} = (\widetilde{\vr} A \vf) \cdot \textbf{\textup{n}} &\mbox{on } (0,T) \times \partial \Omega,
			\end{cases}
		\end{equation}
		in a distributional sense; in particular, the following estimate holds:
		\begin{align} \label{pressure estimate}
			\|p\|_{L^2(0,T; W^{4,2}(\Omega))} \lesssim \|\tilde{\vr}\|_{L^\infty(0,T; W^{3,2}(\Omega))} \|\ff\|_{L^2(0,T;W^{3,2}(\Omega))}.
		\end{align}
	\end{lemma}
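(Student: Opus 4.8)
The idea is to read \eqref{elliptic system} as a Neumann problem for the constant-coefficient, uniformly elliptic operator $p\mapsto\dive(A\Grad p)=A:\Grad^2 p$, where $A=(\mu M_0)^{-1}$ is symmetric and positive definite (see \eqref{Darcy2}), and to solve it separately for a.e.\ $t\in(0,T)$, integrating the resulting estimate in time at the end. First I would fix such a $t$, set $\vg:=\widetilde{\vr}(t,\cdot)\,A\vf(t,\cdot)$, and use that $W^{3,2}(\Omega)$ is a Banach algebra in three space dimensions (equivalently $W^{3,2}(\Omega)\hookrightarrow C^{1,1/2}(\overline{\Omega})$), so that $\|\vg\|_{W^{3,2}(\Omega)}\lesssim\|\widetilde{\vr}(t,\cdot)\|_{W^{3,2}(\Omega)}\|\vf(t,\cdot)\|_{W^{3,2}(\Omega)}$ and hence $\vg\in L^2(0,T;W^{3,2}(\Omega))$ with the corresponding bound. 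The weak formulation to be solved is: find $p(t,\cdot)\in W^{1,2}(\Omega)$ with $\int_\Omega p(t,\cdot)\dx=0$ and $\int_\Omega A\Grad p\cdot\Grad\varphi\dx=\int_\Omega\vg\cdot\Grad\varphi\dx$ for all $\varphi\in W^{1,2}(\Omega)$; this encodes simultaneously the equation and the Neumann condition in \eqref{elliptic system}.

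Existence and uniqueness at the $W^{1,2}$ level I would obtain from the Lax--Milgram lemma on $W^{1,2}(\Omega)/\R$ together with the Poincar\'e--Wirtinger inequality; the compatibility condition $\int_{\partial\Omega}(\vg\cdot\mathbf{n})=\int_\Omega\dive\vg$ holds automatically because the data appear in divergence form, and the zero-mean solution obeys the energy bound $\|p(t,\cdot)\|_{W^{1,2}(\Omega)}\lesssim\|\vg\|_{L^2(\Omega)}$. Uniqueness modulo additive constants --- which is all that is needed, since only $\Grad p$ enters \eqref{Darcy} --- is immediate, as the homogeneous problem forces $\Grad p=0$. The heart of the matter is the elliptic regularity upgrade: since $A$ has $C^\infty$ coefficients, $\Omega$ is of class $C^3$, and both the interior source $\dive\vg$ and the boundary datum $\vg\cdot\mathbf{n}$ come from the single field $\vg(t,\cdot)\in W^{3,2}(\Omega)$, the divergence-form $L^2$-regularity theory for the Neumann problem gives $p(t,\cdot)\in W^{4,2}(\Omega)$ with $\|p(t,\cdot)\|_{W^{4,2}(\Omega)}\lesssim\|\vg(t,\cdot)\|_{W^{3,2}(\Omega)}+\|p(t,\cdot)\|_{L^2(\Omega)}\lesssim\|\vg(t,\cdot)\|_{W^{3,2}(\Omega)}$, the last step using the energy bound and the normalization.

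It remains to pass from the pointwise-in-time estimate to the space--time one. Since the solution operator $\vg(t,\cdot)\mapsto p(t,\cdot)$ is linear and bounded from $W^{3,2}(\Omega)$ into $W^{4,2}(\Omega)$, and $t\mapsto\vg(t,\cdot)$ is measurable with values in $W^{3,2}(\Omega)$, the map $t\mapsto p(t,\cdot)$ is measurable as well, so $p\in L^2(0,T;W^{4,2}(\Omega))$; squaring the pointwise bound, integrating over $(0,T)$, and pulling out $\|\widetilde{\vr}\|_{L^\infty(0,T;W^{3,2}(\Omega))}$ yields exactly \eqref{pressure estimate}.

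I expect the main obstacle to be the fourth-order elliptic regularity step: one has to invoke the Neumann $L^2$-regularity theory (Agmon--Douglis--Nirenberg type estimates, or Ne\v{c}as/Grisvard) in the divergence-form version so that $\vg\in W^{3,2}$ genuinely yields $p\in W^{4,2}$ and not just $W^{3,2}$, which also needs $\partial\Omega$ to carry enough smoothness. A robust way to organize this is to bootstrap on the coercive shifted problem $\dive(A\Grad p)-p=\dive\vg-p$, whose right-hand side is already controlled in the appropriate norm at each stage thanks to the $W^{1,2}$ bound obtained above. The remaining ingredients --- the Banach algebra property of $W^{3,2}(\Omega)$, the automatic compatibility condition, and the time integration --- are routine.
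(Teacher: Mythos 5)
Your proposal is correct, but it takes a genuinely different (and more careful) route than the paper. The paper's proof is a one-shot application of Lax--Milgram directly on the space--time Hilbert space $H=L^2(0,T;W^{4,2}(\Omega))$: with $g=\dive(\widetilde{\vr}A\vf)$ and $h=(\widetilde{\vr}A\vf)\cdot\mathbf{n}$, it sets $a(p,\varphi)=\int_0^T\int_\Omega A\Grad p\cdot\Grad\varphi\dx\dt$ and $F(\varphi)=-\int_0^T\int_\Omega g\varphi\dx\dt+\int_0^T\int_{\partial\Omega}h\varphi\,\mathrm{d}S_x\dt$, asserts that $a$ is coercive on $H$ because $A$ is symmetric positive definite, and concludes $\|p\|_H\lesssim\|F\|_{H'}$, estimating $\|F\|_{H'}$ by $\|\widetilde{\vr}\|_{L^\infty(0,T;W^{3,2})}\|\ff\|_{L^2(0,T;W^{3,2})}$. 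As written, that conceals precisely the point you isolate as the heart of the matter: the Dirichlet form controls only $\|\Grad p\|_{L^2}$, so coercivity on $W^{4,2}(\Omega)$ is not literal, and the $W^{4,2}$ bound genuinely requires the elliptic regularity upgrade you invoke (conormal Neumann problem with divergence-form data, ADN/Grisvard), applied for a.e.\ $t$ and then integrated in time. Your decomposition --- Lax--Milgram on $W^{1,2}(\Omega)/\R$ with Poincar\'e--Wirtinger and the automatic compatibility condition, then regularity, then measurability and time integration --- is the standard way to make the statement rigorous, at the cost of being longer; what the paper's formulation buys is brevity and a single functional-analytic step. One point to watch in your regularity step (and which the paper does not address either): reaching $W^{4,2}(\Omega)$ by the usual theory requires $\partial\Omega\in C^{3,1}$ or $C^4$ rather than the $C^3$ assumed in Theorem~\ref{Local Existence}; this is harmless for the application, which only uses $p$ through $\Grad p$, but it should be flagged or the boundary regularity hypothesis strengthened.
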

	\begin{proof}
		To simplify the notation, we denote $g= \dive (\widetilde{\vr} A\vf )$ and $h= (\widetilde{\vr} A \vf) \cdot \textbf{\textup{n}}$. \\
		Multiplying the first equation of \eqref{elliptic system} by $\varphi \in  C^1([0,T] \times \overline{\Omega} )$ and integrating the resulting expression over $(0,T) \times \Omega$, through an integration by parts we get the identity
		\begin{equation*}
			\int_{0}^{T}\int_{\Omega} A \Grad p \cdot \Grad \varphi \ \dx\dt = - \int_{0}^{T}\int_{\Omega} g \varphi \ \dx \dt  + \int_{0}^{T} \int_{\partial \Omega} h\varphi \ \textup{d} S_x  \dt . 
		\end{equation*}
		Therefore, we fix the Hilbert space $H:= L^2(0,T; W^{4,2}(\Omega))$ and set
		\begin{align*}
			&a(p, \varphi) := \int_{0}^{T}\int_{\Omega} A \Grad p \cdot \Grad \varphi \ \dx \dt, \\
			&F(\varphi) := - \int_{0}^{T}\int_{\Omega} g \varphi \ \dx \dt + \int_{0}^{T} \int_{\partial \Omega} h\varphi \ \textup{d} S_x \dt . 
		\end{align*}
		Clearly, $a : H \times H \to \mathbb{R}$ is a continuous bilinear form, and, due to the fact that the matrix $A$ is a symmetric positive definite matrix, it is also coercive. Moreover, $F$ is a linear and continuous functional on $H$, hence $F \in H'$. \\
		As consequence of the Lax-Milgram theorem, there exists a unique $p \in H$ such that $a(p, \varphi)= F(\varphi)$ for any $\varphi \in H $ and such that $\| p\|_H \lesssim \| F\|_{H'}$, leading to estimate \eqref{pressure estimate}. More precisely, due to the embedding $W^{3,2}(\Omega) \hookrightarrow L^\infty(\Omega)$ and the fact that $W^{1,2}(\Omega) \hookrightarrow L^2(\d \Omega)$, we have
		\begin{align*}
		\|F\|_{H'} &\lesssim \|g\|_{L^2(0,T; L^2(\Omega))} + \|h\|_{L^2(0,T; L^2(\d \Omega))} \\
		&\lesssim \|\Grad (\tilde{\vr} \ff)\|_{L^2(0,T; L^2(\Omega))} + \|\tilde{\vr} \ff\|_{L^2(0,T; W^{1,2}(\Omega))} \\
		&\lesssim \|\tilde{\vr}\|_{L^\infty(0,T; W^{1,2}(\Omega))} \|\ff\|_{L^2(0,T; L^\infty(\Omega))} + \|\tilde{\vr}\|_{L^\infty(0,T; L^\infty(\Omega))} \|\Grad \ff\|_{L^2(0,T; L^2(\Omega))} \\
		&\lesssim \|\tilde{\vr}\|_{L^\infty(0,T; W^{3,2}(\Omega))} \|\ff\|_{L^2(0,T; W^{3,2}(\Omega))}.
		\end{align*}
	\end{proof}
	
	Finally, if we fix 
	\begin{equation*}
		\widetilde{\vr} \in C([0,T]; W^{3,2}(\Omega)), \quad \ff \in L^2(0,T; W^{3,2}(\Omega)), \quad \widetilde{p} \in L^2(0,T; W^{4,2}(\Omega)),
	\end{equation*}
	the corresponding $\vu$, uniquely determined by the second equation of \eqref{Darcy2}, clearly belongs to the class 
	\begin{equation*}
		\vu \in L^2(0,T; W^{3,2}(\Omega; \mathbb{R}^3)), 
	\end{equation*}
	with  
	\begin{align} \label{estimate velocity}
		\| \vu \|_{L^2(0,T; W^{3,2}(\Omega; \mathbb{R}^3))} &\lesssim \| \widetilde{\vr}\|_{L^\infty(0,T; W^{3,2}(\Omega))} \| \vf \|_{L^2(0,T; W^{3,2}(\Omega; \mathbb{R}^3))} + \| \widetilde{p}\|_{L^2(0,T; W^{4,2}(\Omega))} \notag \\
		&\lesssim \| \widetilde{\vr}\|_{L^\infty(0,T; W^{3,2}(\Omega))} \| \vf \|_{L^2(0,T; W^{3,2}(\Omega; \mathbb{R}^3))}.
	\end{align}

	We are now ready to prove the local existence result Theorem~\ref{Local Existence}. Given the initial datum $\vr_0 \in W^{3,2}(\Omega)$ and $B=2 \|\vr_0\|_{W^{3,2}(\Omega)}^2$, for some positive $T$ to be fixed later, we consider the space $X_T= C([0,T]; W^{2,2}(\Omega))$ and the set
	\begin{equation*}
		K_{T} := \left\{ \widetilde{\vr} \in X_T  \ : \quad \begin{aligned}
			&\quad \ \widetilde{\vr} \in C([0,T]; W^{3,2}(\Omega)), \quad \partial_t \widetilde{\vr} \in L^2(0,T; W^{2,2}(\Omega)), \\ 
			&\widetilde{\vr}(0, \cdot)=\vr_0, \quad  \sup_{t \in [0, T]} \| \widetilde{\vr}(t, \cdot) \|_{W^{3,2}(\Omega)}^2 + \int_{0}^{T} \| \partial_t \widetilde{\vr} \|_{W^{2,2}(\Omega)}^2 \dt \leq B
		\end{aligned}  \right\}.
	\end{equation*}
	Since $\tilde{\vr}(t,x) = \vr_0(x) \in K_{T}$, $K_T$ is a non-empty, convex and closed subset of $X_T$. Moreover, as consequence of Aubin-Lions lemma, $K_T$ is compact in $X_T$. 
	
	Now,  for any fixed $\widetilde{\vr} \in K_{T}$, by Lemma~\ref{ES} we find the corresponding pressure $p=p[\widetilde{\vr}]$, and consequently the velocity $\vu= \vu[\widetilde{\vr}, \widetilde{p}]= \vu[\widetilde{\vr}]$. Therefore, we define the map $L$ on $K_{T}$ such that $\vr :=L[\widetilde{\vr}]$ is the solution of the transport equation
	\begin{equation*}
		\partial_t \vr + \vu[\widetilde{\vr}] \cdot \Grad \vr  =0, \quad \vr(0,\cdot)=\vr_0. 
	\end{equation*}
	Combining estimates \eqref{estimate velocity} and \eqref{pressure estimate} with \eqref{estimate density}, we also recover from the last line that
	\begin{equation}\label{inequSolL}
			\begin{aligned}
				&\sup_{t \in [0, T]} \| L[\widetilde{\vr}] (t,\cdot) \|_{W^{3,2}(\Omega)}^2 + \int_{0}^{T} \| \partial_t L[\widetilde{\vr}] \|_{W^{2,2}(\Omega)}^2 \dt \\
				&\leq \sup_{t \in [0, T]} \| L[\widetilde{\vr}] (t,\cdot) \|_{W^{3,2}(\Omega)}^2 \left(1+ \int_{0}^{T} \| \vu[\widetilde{\vr}] \|_{W^{2,2}(\Omega)}^2 \dt \right) \\
				&\leq \| \vr_0\|_{W^{3,2}(\Omega)}^2 \left( 1+ C_2 B \int_{0}^{T} \| \vf \|_{W^{3,2}(\Omega; \mathbb{R}^3)}^2 \dt + C_2 T\right)^2 \exp \left( C_2 B \int_{0}^{T} \| \vf \|_{W^{3,2}(\Omega; \mathbb{R}^3)}^2 \dt + C_2 T\right).
			\end{aligned}
	\end{equation}
	Hence, choosing $T$ sufficiently small, say $T=T^*$, we get that $L(K_{T^*}) \subset K_{T^*}$.
	Furthermore, the map $L$ is continuous in $X_{T^*}$; indeed, if we consider a sequence $\{ \widetilde{\vr}_n \}_{n \in \mathbb{N}} \subset K_{T^*}$ such that $$ \widetilde{\vr}_n \to \widetilde{\vr} \quad \mbox{in } X_{T^*}=C([0,T^*]; W^{2,2}(\Omega)),$$ and we define $\vr_n:=L[\widetilde{\vr}_n]$,  $\vr :=L[\widetilde{\vr}]$, taking the difference of the equations satisfied by $\vr_n$ and $\vr$, multiplying it by $\vr_n-\vr$ and integrating the resulting expression over $\Omega$, we get the identity 
	\begin{equation} \label{e5}
			\frac12 \frac{\textup{d}}{\dt} \int_{\Omega} (\vr_n- \vr)^2 \dx + \int_{\Omega} (\vr_n- \vr) \big(\vu [\widetilde{\vr}_n]- \vu[\widetilde{\vr}]\big) \cdot \Grad \vr_n \dx =0,
	\end{equation}
	where we have used the fact that, through an integration by parts, 
	\begin{equation*}
			\int_{\Omega } (\vr_n- \vr)\  \vu[\widetilde{\vr}] \cdot \Grad(\vr_n- \vr) \ \dx =  \frac12 \int_{\Omega} \vu[\widetilde{\vr}] \cdot \Grad |\vr_n- \vr|^2 \ \dx =0. 
	\end{equation*}
	Integrating \eqref{e5} over $(0,\tau) \times \Omega$ with $0\leq \tau\leq T^*$, by Young's inequality we obtain the inequality
	\begin{equation*}
		\| (\vr_n- \vr) (\tau, \cdot) \|_{L^2(\Omega)}^2 \lesssim B \int_{0}^{\tau} \left\| \vu[\widetilde{\vr}_n] - \vu[\widetilde{\vr}] \right\|_{L^2(\Omega; \mathbb{R}^3)}^2 \dt + \int_{0}^{\tau} \|  \vr_n- \vr \|_{L^2(\Omega)}^2 \dt,
	\end{equation*}
	where we used that $\sup_{t \in [0, T^*]} \| \Grad \vr_n\|_{L^{\infty}(\Omega)}^2 \lesssim \sup_{t \in [0, T^*]} \|  \vr_n\|_{W^{3,2}(\Omega)}^2 \lesssim B$ since $\vr_n \in K_{T^*}$.	By a Gr\"onwall argument and \eqref{estimate velocity}, we therefore obtain that
	\begin{equation*}
		\sup_{t \in [0, T^*]} \| (\vr_n- \vr) (t, \cdot) \|_{L^2(\Omega)}^2 \leq C\left(\|\vf\|_{L^2(0,T; W^{3,2}(\Omega; \mathbb{R}^3))}, B, T^*\right) \sup_{t \in [0, T^*]} \| (\widetilde{\vr}_n - \widetilde{\vr}) (t, \cdot) \|_{L^2(\Omega)}^2. 
	\end{equation*}
	Hence, $\vr_n \to \vr$ in $C([0,T^*]; L^2(\Omega))$, which implies the convergence in $C([0,T^*]; W^{2,2}(\Omega))$ since $K_{T^*}$ is a compact subset of $X_{T^*}$. By Schauder's fixed point theorem, we get that $L$ has a fixed point $\vr \in K_{T^*}$, yielding the solution we were seeking for. 
	
	Finally, if $(\vr, \vu, p)$ is a solution of \eqref{Darcy2} in $(0,T^*) \times \Omega$ belonging to the regularity class 
	\begin{align*}
		&\vr \in C([0,T^*]; W^{3,2}(\Omega)), \quad \d_t \vr \in L^2(0,T^*; W^{2,2}(\Omega)), \\
		&\quad (\vu, p) \in L^2(0,T^*; W^{3,2}(\Omega; \mathbb{R}^3) \times W^{4,2}(\Omega) ),
	\end{align*} 
	then, taking the time-derivative of \eqref{elliptic system} and using elliptic estimates, it can be deduced that
	\begin{equation*}
		( \d_t \vu, \d_t p) \in L^2(0,T^*; W^{2,2}(\Omega; \mathbb{R}^3) \times W^{3,2}(\Omega) ), 
	\end{equation*}
	and that the solution is unique in this class, provided $\ff \in L^\infty(0, T^*; W^{3,2}(\Omega))$. This concludes the proof of Theorem~\ref{Local Existence}.
	
	\begin{remark}
	As a consequence of estimate \eqref{inequSolL}, we can find a global-in-time solution, provided the initial datum $\|\vr^0\|_{W^{3,2}(\O)} \ll 1$ such that still $L(K_T) \subset K_T$.
	\end{remark}
	
	\section*{Acknowledgements}
	{\it The work of D.B. is supported by the PRIN project 2022 ``Partial differential equations and related geometric-functional inequalities'', financially supported by the EU, in the framework of the ``Next Generation EU initiative''. The Department of Mathematics of Politecnico di Milano is supported by MUR ``Excellence Department 2023-2027''. F.O. has been supported by the Czech Academy of Sciences project L100192351. The Institute of Mathematics, CAS is supported by RVO:67985840. J.P. is supported  by the Natural Science Foundation of Jiangsu Province under grant BK20240058 and by the National Natural Science Foundation of China under Grant 12171235.}
	
	\section*{Conflict of interest}
	The authors declare no conflict of interest in this paper.
	
	
	\bibliographystyle{amsalpha}
	
	\newcommand{\etalchar}[1]{$^{#1}$}
\providecommand{\bysame}{\leavevmode\hbox to3em{\hrulefill}\thinspace}
\providecommand{\MR}{\relax\ifhmode\unskip\space\fi MR }
\providecommand{\MRhref}[2]{%
  \href{http://www.ams.org/mathscinet-getitem?mr=#1}{#2}
}
\providecommand{\href}[2]{#2}

\end{document}